\newtheorem{lemma}{Lemma}
\newtheorem{proposition}{Proposition}
\newtheorem{remark}{Remark}
\newtheorem{cor}{Corollary}
\newtheorem{theorem}{Theorem}
\def\exp{\textup{exp}}
\def\Poisson{\textup{Poisson}}
\def\ln{\textup{ln}}
\author{Silvia Businger}
\title[Occupancy scheme in a random environment]{Asymptotics of the occupancy scheme \\ in a random environment and its  applications to tries}
\affiliation{
  Universität Zürich, Switzerland}
\keywords{Occupancy scheme, Coupon collector Problem, random environment, tries}
\begin{document}
\publicationdetails{19}{2017}{1}{22}{2037}
\maketitle
\begin{abstract}
 Consider $ m $ copies of  an irreducible, aperiodic Markov chain $ Y $  taking values in a finite state space. The asymptotics as $ m $ tends to infinity, of  the first time from which on the trajectories of the $ m $ copies differ, have been studied by Szpankowski (1991) in the setting of tries. We use a different approach and model the $ m $ trajectories by a variant of the occupancy scheme, where we consider a nested sequence of boxes. This approach will enable  us to extend the result to  the case when the transition probabilities are random. We moreover use the same techniques to study the asymptotics as $ m $ tends to infinity of the time up to which we have observed all the possible trajectories of $ Y $ in random and nonrandom scenery. 
\end{abstract}

\section{Introduction}

 Let $Y= (Y_k)_{k \in \mathbb{N}} $ be an irreducible, aperiodic Markov chain  taking values in a finite state space, say $ \Sigma= \lbrace 1,...,K \rbrace $, denote its transition probabilities by $ (p_{ij}, i,j \in \Sigma) $, and consider $ m $ independent copies of $ Y $.  We define $ H_m \in \mathbb{N}$,  the first time from which on the trajectories of the $ m $ copies differ and $ G_m \in \mathbb{N} $, the  maximal time up to which we have observed all the possible trajectories of $ Y $. A natural question which has generated a lot of interest in the literature concerns the asymptotic behavior of $ H_m $ and $ G_m $ as $ m $ tends to infinity. 
 
Typically in  information theory one considers the Markov source model (see for example \cite{Ash}), that is an infinite data string is modeled by the trajectory of a Markov chain taking values in a finite alphabet $ \Sigma $. We call the first $ n $ letters of the data string  a word of length $ n $. If we consider $ m $ independent data strings modeled by a Markov source, then $ H_{m} $  is the minimal length $ n $ such that the $ m $ words with length $ n $ are all distinct. In the same spirit $ G_{m} $ describes up to which length we observe all the possible words. More generally we will consider $ H_{m,j} $ the minimal length such that out of the $ m $ words with length $ n $ at most $ j-1 $ words are non-distinct, and $ G_{m,j} $ describing up to which length we observe all the possible words at least $ j $ times.

The length $ H_m $ can be described in terms of a class of combinatorial trees, the so called $ K $-ary tries that can be  constructed with an iterative procedure. Consider $ m $  words with letters in the alphabet $ \Sigma $. We start with the root of a tree. We then look  at the first letter of  the $ m $ words. For each different letter we add  a node to the root, in increasing order. If for a letter there is only one word starting with this letter, then the corresponding node will be a leaf of the tree and we store the word in that leaf. We then look at the second letters of the words not yet being stored in a leaf. If  there are at least two words starting with the letter $ i \in \Sigma$, we then look at the second letters of all the words starting with $ i $. For each different letter we add a node to the node $ i $ in the same way as in the first step.  We then repeat this procedure until all the words are stored in a leaf. For more details see for example \cite{drmota}. When the data strings have been generated by a Markov source then the height of this $ K $-ary trie is equal to  $ H_m $. 

The height of tries have been a subject in research for many years, see  \cite{Flajolet} and  \cite{Devroye}. The limit law of the height has finally been derived by \cite{Pittel}, see also \cite{Pittel2}.  The Markov source model, was studied by   \cite{S}. He established that
\[   H_{m}  \sim \frac{2}{-\ln(\rho(2))}\ln(m),  \quad a.s.,\]
as $ m $ tends to infinity, where $ \rho(2) $ denotes the maximum modulus eigenvalue of the matrix $ A(2)= (p_{ik}^{2})_{i,k \in \Sigma} $, by analyzing the longest common prefix of each possible pair of two data strings. 

This result can be extended  to the case when each external node is allowed to store up to $ j-1 $ data strings. The tree defined by this modification is called a $ (j-1)$-trie. Szpankowski (see \cite{S} and \cite{S2}) studied the asymptotics of the height of a $ (j-1) $-trie, he established that
\[H_{m,j} \sim \frac{j}{-\ln(\rho(j))}\ln(m),  \quad a.s.,\]
as $ m $ tends to infinity, where $ \rho(j) $ denotes the maximum modulus eigenvalue of the matrix given by $ A(j)= (p_{ik}^{j})_{i,k \in \Sigma} $.

Our purpose in this work is to investigate a similar problem in random environment. This is a  natural model if for instance one wants to take into account transmission errors in the setting of information theory.
 
 In order to explain how, in our model, the random environment acts on the trajectories of chains, let us first recall a simple construction of i.i.d. copies of the Markov chain $Y$ with transition matrix $(p_{ik})_{i,k \in \Sigma}$ using the balls-in-bins setting (see, e.g.  \cite{Devroye2}).  For a general introduction to balls-in-bins  in deterministic environment and the classical occupancy scheme with finitely many boxes see for example  \cite{Kolchin}  or  \cite{Kotz}. There is a broad literature on  the occupancy scheme in deterministic environment and comparatively few investigations of the occupancy scheme in random environment. An infinite occupancy scheme  in a random environment called the Bernoulli sieve has been introduced in \cite{Gnedin}  and then studied in depth, a survey can be found for example in \cite{Gnedin2}. A scheme of a similar type can also be found in \cite{Robert}.

The set of words with length $n$ in the alphabet $\Sigma$ is $\Sigma^{n}$, with the convention that for $n=0$, $\Sigma^{0}=\{ \emptyset \}$ is the empty word. For the sake of simplicity,
we assume that the initial state of the Markov chain $Y$ is always $1$, and construct a nested family of boxes (or bins) indexed by the regular $ K $-ary tree $\mathcal{U}= \bigcup_{n \in\mathbb{Z}_+}\Sigma^{n}$ as follows. At generation $ 0$, there is a single box $b_{\emptyset}$ with unit size and type $1$. At the first generation, we divide the box $b_{\emptyset}$ into $(b_i: i\in \Sigma)$ where $b_i$ has type $i$ and size $\vert b_i \vert=p_{1i}$. We iterate for the next generations in an obvious way. For each word ${u}=(i_1, \ldots, i_n)\in \Sigma^n$,
the box $b_{{u}}$ has type $i_n$ and is split at the next generation  into sub-boxes $(b_{{u} k}: k\in \Sigma)$, where $b_{{u} k}$ has type $k$ and size $|b_{{u} k}|=|b_{{u}}|p_{i_nk} $. 
Now imagine that we throw a ball into the initial box, distribute it  uniformly at random to the next generation of boxes, and observe the sequence of the types of the sub-boxes it passes through, generation after generation. We then clearly obtain a version of the Markov chain $Y$, and more generally, throwing $m$ balls independently yields the trajectories of $m$ i.i.d. copies of $Y$.   In this setting, the height $ H_{m,j} $ of  a $(j-1)$-trie corresponds to the first generation at which all boxes contain strictly less than $j$ balls. For   $ j \geq 1 $ the first generation  when there is a box containing strictly  less than $ j $ balls  when $ m $ balls have been thrown  corresponds to $ G_{m,j} $, and is referred to as saturation level.

The random environment that we shall consider corresponds to splitting boxes randomly rather than deterministically, in a Markovian manner. More precisely, we now consider 
for each word ${u}=(i_1, \ldots, i_n)\in \Sigma^n$, an independent  copy $A_{u}=(p_{ik}(u))_{i,k \in\Sigma}$ of a {\it random} transition matrix $A=(p_{ik})_{i,k \in\Sigma}$. The box $b_{{u}}$ with type $i_n$ is split at the next generation into $(b_{{u} k}: k\in \Sigma)$, where $b_{{u} k}$ has type $k$ and size $|b_{{u} k}|=|b_{{u}}|p_{i_nk}(u)$. Then throwing $ m $ balls uniformly at random yields the $m$ data strings in random environment that we are interested in. Note that knowing the family of box sizes, the trajectory of a ball through boxes is not described by a Markov chain. It is  thus not possible to reduce the study to applying the results due to Szpankowski by conditioning on the environment.

In order to investigate the behavior of the heights $H_{m,j}$ and saturation levels $G_{m,j}$ as $m$ tends to infinity, we shall first show how the results by Szpankowski in deterministic environment can be recovered using an occupancy scheme analysis. Even though the main result has already been established  by different arguments, we shall provide a detailed account as the same approach can then  be adapted to the random environment setting. Specifically, we shall investigate the distribution of the sizes of the boxes at a large generation $n$, being especially interested in large deviation type estimates. We will moreover use the same techniques to study  the asymptotics of the height $ H_{m,j} $ when $ j $ depends on $ m $, more precisely $ j=j(m)=m^{\alpha} $ for $ \alpha \in (0,1) $. 

We shall then show that this approach can be adapted in the random environment setting. In this direction, we shall first observe that taking the logarithm of the sizes of boxes yields a multitype branching random walk, which then enables us to apply large deviations estimates due to  \cite{biggins}. We will see that different from the nonrandom case there is a phase transition in the asymptotic behavior of $ H_{m,j} $ as $ m $ tends to infinity and that
\[H_{m,j} \sim  C(j) \cdot  \ln(m) \quad a.s.\]
where $ C(j) $ is a constant depending on $ j $ for $ j $ smaller than a critical parameter and $ C(j)=\zeta^* $ is a constant arising in the limit behavior of the largest box for $ j $ larger than the critical parameter.

The study of the limiting behavior of  the saturation level is closely related to the coupon collector's problem. (See for example \cite{coupon}).
We will consider each box of generation $ n $ as a coupon, each of a different sort and suppose that a collector wants to have at least one of each sort.  We say that a collector buys a certain coupon  if a ball lands in the corresponding box. Then $ G_{m,j} $ is the first generation $ n $ when the collector fails to have at least $ j $ coupons of each sort. We will see that:
\[  G_{m,j} \sim  \zeta_* \cdot  \ln(m) \quad a.s.\]
where $ \zeta_* $ is a constant appearing in the limit behavior of the smallest box. 

Some of our arguments are inspired by the works by  \cite{Jean} and  \cite{Joseph}, who used the fundamental results of  \cite{biggins2} on the asymptotic behaviors of  branching random walks to investigate limits of occupancy scheme in the setting of random multiplicative (monotype) cascades.

\subsection{Large deviation behavior of the box sizes}
In this section we give some results on the asymptotic behavior of the box sizes as the generation $ n $ tends to infinity. We will for simplicity assume that we start from a box of type  $ 1 $.

Recall that $( p_{ij} $, $ i,j \in \Sigma )$ denotes the transition probabilities of the irreducible, aperiodic Markov chain $ Y $. For each $ \theta \in \mathbb{R} $ we define the $ K\times K $-matrix $ A(\theta):= (p_{ij}^{\theta})_{i,j \in \Sigma} $, with the convention that if $ p_{ij}=0$, then $ p_{ij}^{\theta}=0$, even for $ \theta \leq 0$.  
The matrix $ A(\theta) $ is connected to the box sizes in the following way: 
 
 Let $ (l_{i,k}^{(n)})_k $ denote the sequence  of the sizes of boxes with type $ i $ at generation $ n $, lexicographically ordered.  Let us further introduce the point measure
\[Z_j^{(n)}=\sum_k \delta_{-\ln \left(l_{j,k}^{(n)} \right)}, \] 
and the Laplace transform of $ Z_j^{(n)} $, that is:
\[ \mathcal{L}_j^{(n)}(\theta) = \sum_k \left(l_{j,k}^{(n)} \right)^{\theta}. \]
A simple iteration argument then shows, that 

\begin{equation} \label{matrix} 
(\mathcal{L}_1^{(n)}(\theta),...,\mathcal{L}_K^{(n)}(\theta)) = (1,0,...,0)(A(\theta))^n. 
\end{equation}

Recall that we call an eigenvalue $ \rho $ of a matrix $ A $ a maximum modulus eigenvalue, if it is a simple root of the characteristic polynomial and its modulus is strictly larger than the modulus of the other roots.

 Note that since $ Y_n $ is aperiodic and irreducible, $ A(\theta) $ is positive regular (that is its  entries  are finite  and there exists some positive integer $ r $ such that all entries of the matrix $ A(\theta)^r $ are strictly positive.)  Recall that we then have from the Perron-Frobenius theorem:
\begin{enumerate}
\item $A(\theta)  $ possesses a unique maximum modulus eigenvalue $ \rho(\theta) \in \mathbb{R} $,
\item there exists a strictly positive left-eigenvector $ w(\theta)=(w_1(\theta),...,w_K(\theta)) $ and a strictly positive right-eigenvector $ v(\theta)=(v_1(\theta),...,v_K(\theta)) $ with eigenvalue $ \rho(\theta) $, normalized such that we have $ (w(\theta))^tv(\theta)=1 $,
\item  $ \min_i \sum_j A(\theta)_{ij} \leq \rho(\theta) \leq \max_i \sum_j A(\theta)_{ij} $.
\end{enumerate}
We moreover have that:
\begin{lemma}[ \cite{biggins3}] The maximum modulus eigenvalue $ \rho(\theta) $ is analytic in $ \theta$. 
\end{lemma}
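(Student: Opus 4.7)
The plan is to realize $\rho(\theta)$ as a simple root of a polynomial whose coefficients are analytic in $\theta$, and then invoke the analytic implicit function theorem. First I would observe that each entry of $A(\theta)$ is analytic: a zero entry is trivially analytic, and a nonzero entry $p_{ij}^\theta = \exp(\theta \ln p_{ij})$ is entire in $\theta$. Consequently the characteristic polynomial
\begin{equation*}
P(\lambda,\theta) := \det(\lambda I - A(\theta)) = \lambda^K + \sum_{r=0}^{K-1} c_r(\theta)\,\lambda^r
\end{equation*}
has coefficients $c_r(\theta)$ that are real-analytic (indeed, entire when extended to $\mathbb{C}$), since they are polynomial expressions in the entries of $A(\theta)$.

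Next, I would fix an arbitrary $\theta_0 \in \mathbb{R}$. Because $A(\theta_0)$ is positive regular, Perron--Frobenius property (1) guarantees that $\rho(\theta_0)$ is a \emph{simple} root of $P(\cdot,\theta_0)$, so $\partial_\lambda P(\rho(\theta_0),\theta_0) \neq 0$. The analytic implicit function theorem then yields a neighborhood $U$ of $\theta_0$ and a unique analytic function $\tilde\rho : U \to \mathbb{C}$ satisfying $\tilde\rho(\theta_0) = \rho(\theta_0)$ and $P(\tilde\rho(\theta),\theta) = 0$ for all $\theta \in U$.

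The remaining step is to identify $\tilde\rho(\theta)$ with $\rho(\theta)$ on a neighborhood of $\theta_0$. For this I would use continuity of the roots of a monic polynomial in its coefficients: since $\rho(\theta_0)$ has modulus strictly greater than every other root of $P(\cdot,\theta_0)$, there is an $\varepsilon > 0$ such that for $\theta$ close enough to $\theta_0$, exactly one root of $P(\cdot,\theta)$ lies within $\varepsilon$ of $\rho(\theta_0)$, while all others are bounded away. This unique root is necessarily both $\tilde\rho(\theta)$ and the maximum modulus eigenvalue $\rho(\theta)$. As $\theta_0$ was arbitrary, $\rho$ is analytic on all of $\mathbb{R}$.

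The only delicate point is this final identification, ensuring that the locally defined analytic branch $\tilde\rho$ coincides with the globally distinguished Perron eigenvalue; but given the spectral gap supplied by Perron--Frobenius, this is a routine continuity argument rather than a genuine obstacle.
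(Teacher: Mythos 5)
The paper states this lemma with a citation to Biggins rather than supplying a proof, so there is no internal argument to compare against; your proposal is a correct, self-contained proof and is precisely the standard route one would expect that reference to take. The chain — analyticity of the matrix entries $p_{ij}^\theta$, hence of the coefficients of the characteristic polynomial; simplicity of the Perron root from positive regularity; analytic implicit function theorem to get a local analytic branch; and identification of that branch with $\rho(\theta)$ via the spectral gap and continuity of roots — is complete and handles the one genuinely delicate step (the local identification) correctly. Nothing is missing.
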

We deduce:
\begin{lemma}
\label{asymptotics}
For each $ \theta \in \mathbb{R} $ we have:
\[\lim_{n \rightarrow \infty}  \mathcal{L}_i^{(n)}(\theta)\rho(\theta)^{-n}= v_1(\theta)w_i(\theta)  .\]
\end{lemma}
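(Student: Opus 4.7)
The plan is a direct invocation of Perron--Frobenius spectral theory. By identity (\ref{matrix}) the quantity $\mathcal{L}_i^{(n)}(\theta)$ is the $i$-th entry of the row vector $e_1^t A(\theta)^n$, where $e_1^t=(1,0,\ldots,0)$. It therefore suffices to prove that
\[ \rho(\theta)^{-n}\, A(\theta)^n \longrightarrow v(\theta)\, w(\theta)^t \qquad\text{as } n\to\infty, \]
because reading off the $(1,i)$-entry of the rank-one matrix $v(\theta)w(\theta)^t$ yields exactly $v_1(\theta)\, w_i(\theta)$.

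Since $A(\theta)$ is positive regular with a simple maximum modulus eigenvalue $\rho(\theta)$, Perron--Frobenius furnishes a spectral decomposition
\[ A(\theta) = \rho(\theta)\, P(\theta) + N(\theta), \]
where $P(\theta)$ is the rank-one spectral projector onto the Perron eigenspace, $P(\theta) N(\theta) = N(\theta) P(\theta) = 0$, and the spectral radius of $N(\theta)$ is strictly smaller than $\rho(\theta)$. Iterating yields $A(\theta)^n = \rho(\theta)^n P(\theta) + N(\theta)^n$; by Gelfand's formula, $\|N(\theta)^n\|^{1/n}$ converges to the spectral radius of $N(\theta)$, so $\rho(\theta)^{-n}\|N(\theta)^n\|\to 0$, and hence $\rho(\theta)^{-n}A(\theta)^n\to P(\theta)$ entrywise.

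To identify $P(\theta)$ explicitly, I would verify that $P := v(\theta)\, w(\theta)^t$ is the spectral projector. The normalisation $w(\theta)^t v(\theta)=1$ gives $P^2 = v(\theta)\bigl(w(\theta)^t v(\theta)\bigr) w(\theta)^t = P$, while $A(\theta) P = \rho(\theta)\, v(\theta) w(\theta)^t = \rho(\theta) P$ and $P A(\theta) = v(\theta) \bigl(w(\theta)^t A(\theta)\bigr) = \rho(\theta) P$. Thus $P$ is a rank-one idempotent whose image is the Perron eigenspace spanned by $v(\theta)$, and so by uniqueness of the spectral projector associated with a simple eigenvalue, $P(\theta)=v(\theta)w(\theta)^t$. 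Combining this with the convergence above and identity (\ref{matrix}) gives the claim.

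There is no genuine obstacle in this proof: the statement is essentially the Perron--Frobenius spectral decomposition in disguise. The only point requiring care is the normalisation of the eigenvectors, which must be $w(\theta)^t v(\theta)=1$ as in the excerpt; otherwise an extra factor $1/\bigl(w(\theta)^t v(\theta)\bigr)$ would appear in the limit. Note that Lemma~1 (analyticity of $\rho(\theta)$) is not invoked here since only pointwise convergence in $\theta$ is required.
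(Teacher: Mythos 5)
Your proof is correct and takes the same route the paper implicitly intends: the paper offers no explicit proof after ``We deduce,'' but the claim is precisely the standard Perron--Frobenius convergence $\rho(\theta)^{-n}A(\theta)^n\to v(\theta)w(\theta)^t$ read off from the $(1,i)$-entry via identity (\ref{matrix}), which is exactly what you spell out. Your closing remark that Lemma~1 (analyticity of $\rho$) is not needed here, only simplicity and dominance of the Perron eigenvalue, is also accurate.
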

We shall also need the following:
\begin{proposition}[ \cite{kingman}]
The logarithm of the maximum modulus eigenvalue $ \ln(\rho(\theta)) $ is a convex function of $ \theta $.
\end{proposition}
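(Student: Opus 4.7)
The plan is to leverage Lemma \ref{asymptotics}, which gives the convergence $\mathcal{L}_i^{(n)}(\theta)\rho(\theta)^{-n}\to v_1(\theta)w_i(\theta)$ with a strictly positive limit. Taking logarithms and dividing by $n$, this yields the representation
\[
\ln\rho(\theta)=\lim_{n\to\infty}\frac{1}{n}\ln \mathcal{L}_i^{(n)}(\theta)=\lim_{n\to\infty}\frac{1}{n}\ln\sum_k \left(l_{i,k}^{(n)}\right)^{\theta}
\]
for any fixed type $i$. Since pointwise limits of convex functions are convex (as long as the limit exists and is finite on the domain of interest), it suffices to check that for each $n$ the function $\theta\mapsto\ln \mathcal{L}_i^{(n)}(\theta)$ is convex in $\theta\in\mathbb{R}$.

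The latter is a general fact about log-sum-exponential-type functionals: for any finite or countable family of non-negative reals $(a_k)$, the function $\varphi(\theta):=\ln\sum_k a_k^{\theta}$ is convex on $\mathbb{R}$. To see this, write $\theta=\alpha\theta_1+(1-\alpha)\theta_2$ with $\alpha\in(0,1)$ and apply Hölder's inequality with conjugate exponents $1/\alpha$ and $1/(1-\alpha)$:
\[
\sum_k a_k^{\theta}=\sum_k a_k^{\alpha\theta_1}\cdot a_k^{(1-\alpha)\theta_2}\le\left(\sum_k a_k^{\theta_1}\right)^{\alpha}\left(\sum_k a_k^{\theta_2}\right)^{1-\alpha}.
\]
Taking logarithms gives $\varphi(\theta)\le\alpha\varphi(\theta_1)+(1-\alpha)\varphi(\theta_2)$, which is the desired convexity. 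Applied to the sizes $a_k=l_{i,k}^{(n)}\ge 0$ (with the convention that $0^{\theta}=0$ inherited from the matrix definition), this shows convexity of $\ln \mathcal{L}_i^{(n)}$, hence of $\frac{1}{n}\ln \mathcal{L}_i^{(n)}$, for every $n$.

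Passing to the limit as $n\to\infty$ then yields the convexity of $\ln\rho(\theta)$. The only mild point to be careful about is to verify that the relevant quantities are strictly positive so that the logarithms are defined: this follows from the positive regularity of $A(\theta)$ (so that $\mathcal{L}_i^{(n)}(\theta)>0$ for all $n$ large enough by formula \eqref{matrix}) together with the positivity of $v_1(\theta)w_i(\theta)$ from Perron--Frobenius. The principal conceptual step is the reduction via Lemma \ref{asymptotics}; the rest is a direct Hölder argument, and I expect no serious obstacle once the representation is in hand.
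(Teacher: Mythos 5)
The paper does not actually give a proof of this proposition: it is stated with a citation to Kingman's 1961 article and taken as known. So there is no in-paper argument to compare against. Your proof is correct as written, and in fact it reproduces, in the notation of this paper, essentially the argument appearing in Kingman's original work: the spectral radius of a matrix with log-convex entries is log-convex, proved by writing $\ln\rho(\theta)$ as a limit of $\frac{1}{n}\ln$ of sums of powers and applying Hölder. Your use of Lemma \ref{asymptotics} to obtain the representation $\ln\rho(\theta)=\lim_{n\to\infty}\frac{1}{n}\ln\mathcal{L}_i^{(n)}(\theta)$ is legitimate and non-circular, since that lemma is a direct consequence of the Perron--Frobenius theorem (decay of $A(\theta)^n\rho(\theta)^{-n}$ to the rank-one projector $vw^t$) and does not presuppose convexity. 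The Hölder step is applied correctly, the zero entries are handled correctly by the paper's convention $0^{\theta}=0$, and the pointwise-limit-of-convex-functions step is valid because $\ln\rho(\theta)$ is finite for all $\theta$ by positive regularity. The only cosmetic remark is that you could apply Hölder directly to the matrix-power representation $(1,0,\ldots,0)A(\theta)^n$ from equation \eqref{matrix} without going through the box-size language, which would make the argument self-contained at the level of the matrix entries; but this is the same computation in different clothing.
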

\begin{remark}
Note that this also entails the convexity of $ \rho(\theta) $.
\end{remark}
Now, introduce the constants
\[ C_*:=\lim_{\theta \rightarrow -\infty } \frac{\rho(\theta)}{-\rho^{\prime}(\theta)}  \quad \textnormal{and} \quad C^*:= \lim_{\theta \rightarrow \infty} \frac{\rho(\theta)}{-\rho^{\prime}(\theta)},  \]
that will play a crucial role in the asymptotic behavior of the smallest and largest box.
\begin{lemma}
We have $ 0< C_* \leq C^* < \infty $.
\end{lemma}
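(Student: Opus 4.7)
The plan is to pass to $\psi(\theta) := \ln \rho(\theta)$, so that $\rho(\theta)/(-\rho'(\theta)) = -1/\psi'(\theta)$. By Proposition 1, $\psi$ is convex, hence $\psi'$ is nondecreasing and the limits
\[
\phi_* := \lim_{\theta \to -\infty} \psi'(\theta), \qquad \phi^* := \lim_{\theta \to +\infty} \psi'(\theta)
\]
exist in $[-\infty,+\infty]$ with $\phi_* \leq \phi^*$. Since $x \mapsto -1/x$ is increasing on $(-\infty,0)$, the lemma reduces to the chain of inequalities $-\infty < \phi_* \leq \phi^* < 0$.

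Next I derive a weighted-average representation of $\psi'$. Differentiating $w(\theta)^t A(\theta) v(\theta) = \rho(\theta)$ in $\theta$, the eigenvalue equations $A v = \rho v$, $w^t A = \rho w^t$ together with the normalization $(w^t v)(\theta) = 1$ make the contributions from $w'$ and $v'$ cancel, yielding the Hellmann--Feynman identity
\[
\rho'(\theta) \;=\; w(\theta)^t A'(\theta) v(\theta) \;=\; \sum_{i,j:\, p_{ij}>0} w_i(\theta)\,v_j(\theta)\,p_{ij}^\theta \ln p_{ij}.
\]
Dividing by $\rho(\theta)=\sum_{i,j} w_i(\theta) v_j(\theta) p_{ij}^\theta$, and using $w_i(\theta),v_j(\theta)>0$ from Perron--Frobenius, $\psi'(\theta)$ appears as a convex combination of $\{\ln p_{ij}:p_{ij}>0\}$. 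With $p_{\min}:=\min\{p_{ij}:p_{ij}>0\}$ this immediately gives $\psi'(\theta) \geq \ln p_{\min}$ for all $\theta$, so $\phi_* \geq \ln p_{\min} > -\infty$, and also $\psi'(\theta) \leq \max_{p_{ij}>0}\ln p_{ij} \leq 0$.

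The delicate part will be upgrading $\phi^* \leq 0$ to the strict inequality $\phi^* < 0$: the weighted-average bound is saturated when $p_{ij}=1$ for some pair, so some global information is needed. I plan to use a cycle argument on the trace,
\[
\mathrm{tr}\,A(\theta)^n \;=\; \sum_{\text{closed walks } w \text{ of length } n}\;\prod_{(i,j)\in w} p_{ij}^\theta .
\]
Every closed walk decomposes into simple cycles, so $\prod_{(i,j)\in w} p_{ij} \leq (q^*)^n$ where $q^* := \max\bigl\{(\prod_{(i,j)\in\gamma}p_{ij})^{1/|\gamma|}:\gamma\text{ simple cycle}\bigr\}$. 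Under irreducibility and aperiodicity with $K\geq 2$, a simple cycle with $\prod p_{ij}=1$ would consist only of deterministic transitions (each $p_{ij}=1$ forces $p_{ik}=0$ for $k\neq j$), making it a closed communicating class; irreducibility would then force it to contain all of $\Sigma$, contradicting aperiodicity. Hence $q^*<1$. Since there are at most $K^n$ closed walks of length $n$, $\mathrm{tr}\,A(\theta)^n \leq K^n (q^*)^{n\theta}$ for $\theta\geq 0$, and $\rho(\theta)=\lim_n(\mathrm{tr}\,A(\theta)^n)^{1/n}$ (standard for positive regular matrices) produces $\rho(\theta)\leq K(q^*)^\theta$. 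For the convex function $\psi$, $\phi^* = \lim_{\theta\to\infty}\psi(\theta)/\theta \leq \ln q^* < 0$.

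Assembling the three ingredients gives $-\infty < \phi_* \leq \phi^* < 0$, so both $C_* = -1/\phi_*$ and $C^* = -1/\phi^*$ lie in $(0,\infty)$, and the monotonicity of $x\mapsto -1/x$ on $(-\infty,0)$ delivers $C_* \leq C^*$.
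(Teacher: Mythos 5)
Your proof is correct, and it takes a genuinely different route from the paper's. The paper proceeds directly from the Perron--Frobenius row-sum bounds $\min_i \sum_j A(\theta)_{ij} \leq \rho(\theta) \leq \max_i \sum_j A(\theta)_{ij}$, applied both to $A(\theta)$ (giving $p_*^\theta \leq \rho(\theta)$) and to the $r$-th power $A(\theta)^r$ for an $r$ with $A^r$ strictly positive (giving $\rho(\theta)^r \leq K^{r+1}(p^*)^\theta$ with $p^* := \max_{i,j} A^r_{ij} < 1$), and then squeezes $\ln\rho(\theta)/\theta$. You instead split the argument in two: the Hellmann--Feynman identity $\psi'(\theta) = \rho'(\theta)/\rho(\theta) = \sum w_i v_j p_{ij}^\theta \ln p_{ij} / \sum w_i v_j p_{ij}^\theta$ represents $\psi'$ as a weighted average of $\{\ln p_{ij}\}$ and at once yields both the uniform lower bound $\psi' \geq \ln p_{\min}$ and the non-strict upper bound $\psi' \leq 0$; the cycle decomposition of $\mathrm{tr}\,A(\theta)^n$ then upgrades $\phi^* \leq 0$ to $\phi^* \leq \ln q^* < 0$, where $q^*$ is the largest geometric mean of a simple cycle. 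A nice byproduct of your route is that it identifies the exact value $\phi^* = \ln q^*$ (your bound is sharp, which the paper's $\ln p^*/r$ need not be), and it isolates cleanly where aperiodicity with $K \geq 2$ is used (to rule out a deterministic simple cycle); the paper uses the same hypotheses but more implicitly, through $p^* < 1$. Both proofs implicitly exclude the degenerate case $K = 1$.
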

\begin{proof}
First note that the fact that $ \ln(\rho(\theta)) $ is convex entails that $ \frac{\rho(\theta)}{-\rho^{\prime}(\theta)}  $ is an increasing function. Assume first $ \theta >0 $ and let $ p_*:=\inf_{(i,j) \in S} p_{ij} $, where $ S:=\lbrace (i,j) : p_{ij}> 0 \rbrace $.  We then have \[0 < p_*^\theta \leq  \min_i \sum_{j} p_{ij}^\theta \leq \rho(\theta). \]  Now, let $ r $ be such that the matrix $ A^r $ has only positive entries and define the supremum of its entries  $p^*:=\sup_{(i,j)} A^r_{i,j} $. Note that $ p^* < 1 $, that $ A(\theta)^ r $ has only positive entries, and that its maximum modulus eigenvalue is $ \rho(\theta)^r  $. Let $ (l_{ij,k}^{(r)})_k $ denote the sequence  of the sizes of boxes with type $ j $ at generation $ r $ when the first box was of type $ i $. Then:
\[\rho(\theta)^r \leq \max_i \sum_j A(\theta)^r_{ij} = \max_i \sum_j \sum_k \left(l_{ij,k}^{(r)} \right)^{\theta}  \leq K \cdot K^r (p^*)^\theta.\]
Similarly if $ \theta < 0 $ we have $ \rho(\theta) \leq K \cdot p_*^\theta  $ and $ \rho(\theta)^r \geq  (p^*)^\theta $,
and thus:
\[- \infty < \ln(p_*)\leq \lim_{\theta \rightarrow \pm \infty} \frac{\ln(\rho(\theta))}{\theta}   \leq  \frac{\ln(p^*)}{r}< 0. \]
We  get by  l'Hôpitals rule that $-\infty < -\frac{1}{C_*} < 0 $ and $-\infty < -\frac{1}{C^*} < 0 $.
\end{proof}
Now, let us define the size-biased pick of a box of generation $ n $. That is define a random variable $ L_n $ by $ \mathbb{P}(L_n= l_{i,k}^{(n)}) = l_{i,k}^{(n)} $. (Recall that $\sum_{i=1}^K \sum_k l_{i,k}^{(n)}=1. $) We then have:

\begin{lemma}
\label{large deviations}
Let $ \theta \in \mathbb{R} $,  $ z_n(\theta):= e^{n\frac{\rho^{\prime}(\theta)}{\rho(\theta)}} $ and $ \phi(\theta):= \ln(\rho(\theta))-(\theta-1) \frac{\rho^{\prime}(\theta)}{\rho(\theta)} $, then $ -\infty < \phi(\theta) \leq 0 $ for all $ \theta \in \mathbb{R} $, and:

\[\lim_{\varepsilon \downarrow 0} \lim_{n \rightarrow \infty} \frac{1}{n}\ln \left(\mathbb{P} \left( L_n \in  \left(z_n(\theta) e^{-n\epsilon}, z_n(\theta) e^{n\epsilon} \right) \right)\right) = \phi(\theta).\] 

\end{lemma}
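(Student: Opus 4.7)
The plan is to establish the large deviations limit via an exponential change of measure (tilting) in the spirit of Cram\'er's theorem, with Lemma~\ref{asymptotics} as the analytic input. One first dispatches the bounds on $\phi(\theta)$: finiteness is immediate since $\rho(\theta)$ is strictly positive and analytic, so both $\ln\rho(\theta)$ and $\rho'(\theta)/\rho(\theta)=(\ln\rho)'(\theta)$ are well-defined real numbers for every $\theta$. The inequality $\phi(\theta)\leq 0$ is a tangent-line inequality: $A(1)$ is the stochastic transition matrix of $Y$, so $\rho(1)=1$ and $(\ln\rho)(1)=0$; the convexity of $\ln\rho$ stated in the Proposition above then yields $0=(\ln\rho)(1)\geq(\ln\rho)(\theta)+(1-\theta)(\ln\rho)'(\theta)=\phi(\theta)$.

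For the limit itself, introduce the $\theta$-tilted law on the boxes at generation $n$,
\[
\tilde{\mathbb{P}}_\theta\bigl(L_n=l_{i,k}^{(n)}\bigr)=\frac{(l_{i,k}^{(n)})^\theta}{\sum_i\mathcal{L}_i^{(n)}(\theta)},
\]
so that $\mathbb{P}(L_n\in B)=\sum_i\mathcal{L}_i^{(n)}(\theta)\,\tilde{\mathbb{E}}_\theta\bigl[L_n^{1-\theta}\mathbf{1}_{\{L_n\in B\}}\bigr]$. On the window $B_n=(z_n(\theta)e^{-n\varepsilon},z_n(\theta)e^{n\varepsilon})$ the factor $L_n^{1-\theta}$ is squeezed between $z_n(\theta)^{1-\theta}e^{\pm n\varepsilon|1-\theta|}$, and combining $\frac{1}{n}\ln\sum_i\mathcal{L}_i^{(n)}(\theta)\to\ln\rho(\theta)$ (from Lemma~\ref{asymptotics}) with $\frac{1}{n}\ln z_n(\theta)^{1-\theta}=(1-\theta)\rho'(\theta)/\rho(\theta)$ gives
\[
\frac{1}{n}\ln\mathbb{P}(L_n\in B_n)=\phi(\theta)+\frac{1}{n}\ln\tilde{\mathbb{P}}_\theta(L_n\in B_n)+o(1)\pm\varepsilon|1-\theta|.
\]
It thus suffices to show that $\tilde{\mathbb{P}}_\theta(L_n\in B_n)\to 1$ as $n\to\infty$ for every fixed $\varepsilon>0$.

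Concentration under the tilted measure is the core step. Setting $F_n(\theta):=\ln\sum_i\mathcal{L}_i^{(n)}(\theta)$, a direct computation gives $F_n'(\theta)=\tilde{\mathbb{E}}_\theta[\ln L_n]$ and $F_n''(\theta)=\tilde{\mathrm{Var}}_\theta(\ln L_n)$. Because $\rho(\theta)$ is a simple isolated eigenvalue of the real-analytic family $A(\theta)$, analytic perturbation theory furnishes a spectral decomposition $A(\theta)^n=\rho(\theta)^n P(\theta)+R_n(\theta)$ with $R_n(\theta)=O(|\rho_2(\theta)|^n)$ uniformly on compact $\theta$-intervals, where $|\rho_2(\theta)|<\rho(\theta)$. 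This upgrades Lemma~\ref{asymptotics} to $F_n(\theta)=n\ln\rho(\theta)+O(1)$, locally uniformly in $\theta$; Cauchy's estimates then carry the $O(1)$ remainder to $F_n'(\theta)=n\rho'(\theta)/\rho(\theta)+O(1)=\ln z_n(\theta)+O(1)$ and $F_n''(\theta)=O(n)$. Chebyshev's inequality under $\tilde{\mathbb{P}}_\theta$ now gives $\tilde{\mathbb{P}}_\theta(|\ln L_n-\ln z_n(\theta)|\geq n\varepsilon)=O(1/(n\varepsilon^2))\to 0$, so $\tilde{\mathbb{P}}_\theta(L_n\in B_n)\to 1$; plugging this back and letting $\varepsilon\downarrow 0$ delivers the announced limit $\phi(\theta)$.

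The main obstacle I anticipate is precisely this differentiation step: Lemma~\ref{asymptotics} asserts only pointwise asymptotic equivalence of $\sum_i\mathcal{L}_i^{(n)}(\theta)$ with $\rho(\theta)^n v_1(\theta)\sum_j w_j(\theta)$, and such an equivalence cannot in general be differentiated termwise. The uniform analytic spectral decomposition sketched above (or, equivalently, a direct appeal to the Jordan form of $A(\theta)$ together with the Perron--Frobenius spectral gap on compact $\theta$-intervals) is what justifies passing derivatives through the leading term and is the only nontrivial piece of analytic machinery the argument requires.
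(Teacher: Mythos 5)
Your proof is correct, but it takes a genuinely different route from the paper. The paper verifies the hypotheses of the G\"artner--Ellis theorem (and, in a second proof, applies the large--deviation principle for additive functionals of Markov chains from Dembo--Zeitouni) and then reads off the rate at $z=\rho'(\theta)/\rho(\theta)$, so the analytic heavy lifting --- exactly the concentration step you work out by hand --- is delegated to a cited black box. You instead carry out the exponential tilting directly: the change-of-measure identity $\mathbb{P}(L_n\in B)=Z_n(\theta)\,\tilde{\mathbb{E}}_\theta[L_n^{1-\theta}\mathbf{1}_{L_n\in B}]$ cleanly isolates $\phi(\theta)$ as the deterministic prefactor, and the remaining task reduces to showing the tilted law concentrates in the window, which you obtain from $F_n'(\theta)=\tilde{\mathbb{E}}_\theta[\ln L_n]=\ln z_n(\theta)+O(1)$ and $F_n''(\theta)=\tilde{\mathrm{Var}}_\theta(\ln L_n)=O(n)$ plus Chebyshev. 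Your treatment of $\phi(\theta)\leq 0$ via the tangent-line inequality at $\theta=1$ is equivalent to the paper's monotonicity argument ($\phi(1)=0$, $\phi'(\theta)=(1-\theta)(\ln\rho)''$). The one place where your argument needs more than Lemma~\ref{asymptotics} provides is precisely the one you flag: pointwise $\mathcal{L}^{(n)}(\theta)\sim v_1 w\,\rho(\theta)^n$ cannot be differentiated termwise, so you must upgrade it to a locally uniform spectral decomposition $A(\theta)^n=\rho(\theta)^n P(\theta)+R_n(\theta)$ with an exponential gap and then transport the $O(1)$ error to $F_n'$ and $F_n''$ (via Cauchy estimates on a thin complex strip, or explicit perturbation formulas); this is standard simple-eigenvalue perturbation theory but does require checking that $Z_n$ is nonvanishing and holomorphic on such a strip for $n$ large. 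Net effect: the paper's route is shorter and hides the mechanism inside G\"artner--Ellis, while yours is self-contained and makes the tilting structure transparent at the cost of a little extra analytic bookkeeping.
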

\begin{proof}
First note that $\phi(1)=0 $, and $ \phi^\prime(\theta) = (1-\theta) \ln(\rho(\theta))^{\prime\prime} $, thus $ \phi(\theta) $ is decreasing for $ \theta > 1 $ and increasing for $ \theta < 1 $. We then have $ \phi(\theta) \leq 0 $, and the fact that $ -\infty < \phi(\theta) $ follows from the fact that $ \rho $ is analytic.
Now, let $ X_n:=\frac{1}{n}\ln(L_n) $.
We  want to apply the Gärtner-Ellis theorem (see for example \cite{ofer}, Theorem 2.3.6), to the random variable $ X_n $. Let  $ \lambda \in \mathbb{R} $ and define $ \Lambda_n(\lambda) := \ln \mathbb{E}[e^{\lambda X_n}] $. We first need to check that the limit \[\Lambda(\lambda):= \lim_{n \rightarrow \infty} \frac{1}{n} \Lambda_n(n \lambda)   \] exists as an extended real number. Recall that by Lemma 1 we have
\[\frac{K}{2} v(\lambda)(w(\lambda))^t \rho(\lambda)^n \leq \sum_{i=1}^K \mathcal{L}_i^{(n)}(\lambda) \leq  2 K v(\lambda)w^t(\lambda) \rho(\lambda)^n, \]
for sufficiently large $ n $,  and thus 
\begin{eqnarray*}
\lim_{n \rightarrow \infty} \frac{1}{n} \Lambda_n(n 
\lambda) &=& \lim_{n \rightarrow \infty} \frac{1}{n} \ln \left(\sum_{i=1}^K \sum_k  (l_{i,k}^{(n)})^{(\lambda + 1)} \right)  \\&=& \lim_{n \rightarrow \infty} \frac{1}{n} \ln \left(\sum_{i=1}^K \mathcal{L}_i^{(n)}(\lambda + 1) \right)  \\&=& \ln(\rho(\lambda + 1)) \\ &=& \Lambda(\lambda) .
\end{eqnarray*}
We further need to check that $ \Lambda $ is lower semicontinuous and  essentially smooth, that is for \[ \mathcal{D}_{\Lambda}:= \lbrace \lambda \in \mathbb{R} : \Lambda(\lambda) < \infty \rbrace = \mathbb{R}, \] 
the interior  $ \mathcal{D}^o_\Lambda $ is non empty,  $ \Lambda $ is differentiable on $ \mathcal{D}^o_\Lambda $,  and $  \lim_{n \rightarrow \infty} \vert \Lambda^\prime(\lambda_n) \vert = \infty $ for every sequence in $ \mathcal{D}^o_\Lambda $ converging to a boundary point of $ \mathcal{D}^o_\Lambda $.
The essential smothness and lower semicontinuity then follow readily from the fact that $ \Lambda $ is differentiable on $ \mathbb{R} $.
Thus, by the Gärtner-Ellis theorem the large deviation principle holds with good rate function 
 \[\Lambda^*(z)=\sup_{\mu \in \mathbb{R}}( \mu z- \ln(\rho(\mu + 1))).\]
Note that $\Lambda^*(z) \geq 0  $ as $ \rho(1)=1 $. For $ \varepsilon >0 $ let $ B_{\varepsilon}(\theta) := \left(\frac{\rho^{\prime}(\theta)}{\rho(\theta)} -\varepsilon, \frac{\rho^{\prime}(\theta)}{\rho(\theta)} + \varepsilon \right) $ and let $ \overline{B}_{\varepsilon}(\theta)  $ denote its closure.  By the large deviation principle:
 \[\liminf_{n \rightarrow \infty} \frac{1}{n} \ln( \mathbb{P}(X_n \in B_{\varepsilon}(\theta))) \geq -\inf_{x \in B_{\varepsilon}(\theta) } \Lambda^*(x) \]
 and  

\[\limsup_{n \rightarrow \infty} \frac{1}{n} \ln( \mathbb{P}(X_n \in B_{\varepsilon}(\theta))) \leq  -\inf_{x \in \overline{B}_{\varepsilon}(\theta)} \Lambda^*(x). \]
Now, note that $ -\Lambda^*\left(\frac{\rho^{\prime}(\theta)}{\rho(\theta)} \right)  = \phi(\theta) $ and thus:
\begin{eqnarray*}
\phi(\theta) & \leq & \liminf_{\varepsilon \downarrow 0} \liminf_{n \rightarrow \infty} \frac{1}{n} \ln( \mathbb{P}(X_n \in B_{\varepsilon}(\theta))) \\ & \leq & \limsup_{\varepsilon \downarrow 0} \limsup_{n \rightarrow \infty} \frac{1}{n} \ln( \mathbb{P}(X_n \in B_{\varepsilon}(\theta)))   \leq  \phi( \theta).
\end{eqnarray*}
This proves our  claim.
\end{proof}

We next provide  another approach to the previous lemma. Imagine that we follow the trajectory of a ball through the nested sequence of boxes and recall that the sequence of types of the boxes the ball passes through corresponds to a trajectory of the Markov chain $ Y $.  Now, let $ T_n $ denote the type of the box of the $  n$-th generation the ball is passed through and note that $ L_n $ defined in the previous Lemma corresponds to its size.
Then $ (T_n) $ is  an irreducible Markov chain with transition probabilities $ p_{ij} $ and  $ (T_n,T_{n+1}) $ is an irreducible Markov chain with state space $ \lbrace (i,j) \in \Sigma \times \Sigma : p_{ij} \neq 0 \rbrace $ and transition probabilities $ \Pi((i,j),(j,k))=p_{ij} $ and $ \Pi((i,j),(m,k))=0 $ if $ m \neq j $. Further we have that
 \[L_n= \prod_{k=0}^{n-1} p_{T_k,T_{k+1}}. \]
 We  prove again Lemma \ref{large deviations} now   using the large deviation principle for additive functionals of Markov chains.
 
 \begin{proof} Define the deterministic function \[f: \Sigma \times \Sigma \rightarrow \mathbb{R}, \quad (i,j) \mapsto p_{ij}, \]
 and the empirical means 
 \[X_n:= \frac{1}{n} \sum_{k=0}^{n-1} f(T_k,T_{k+1}) = \frac{1}{n} \sum_{k=0}^{n-1} \ln(p_{T_k,T_{k+1}})=\frac{1}{n}\ln(L_n). \] By Theorem 3.1.2 in \cite{ofer}, $ X_n $ fulfills the large deviation principle with good rate function
 \[\Lambda^*(z)=\sup_{\mu \in \mathbb{R}}( \mu z- \ln(\rho(\mu + 1))).\]
The claim then follows in the same spirit as before.
 \end{proof}
 
Let us define a function that will play a crucial role in our analysis:
\begin{eqnarray}
\psi(\theta):= \ln(\rho(\theta)) - \frac{\rho^{\prime}(\theta)}{\rho(\theta)}\theta
\end{eqnarray}
We then have:
 \begin{cor} 
 
 \label{numerous}
 
 Let $ \theta \in \mathbb{R} $ and define  $ z_n(\theta):= e^{n\frac{\rho^{\prime}(\theta)}{\rho(\theta)}} $. We then have:
\[\lim_{\epsilon \downarrow 0} \lim_{n \rightarrow \infty} \frac{1}{n} \ln \left( \sum_{i=1}^K \sum_k 1_{ \left\lbrace l^{(n)}_{i,k} \in \left(z_n(\theta) e^{-n\epsilon}, z_n(\theta) e^{n\epsilon} \right) \right\rbrace} \right)=  \psi (\theta). \]

 \end{cor}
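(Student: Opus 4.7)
The plan is to deduce the corollary directly from Lemma \ref{large deviations} by rewriting the probability appearing there in terms of the counting quantity on the left-hand side. The key identity is that the size-biased law of the boxes has atoms given by the box sizes themselves, so
\[ \mathbb{P}\bigl(L_n \in (z_n(\theta) e^{-n\epsilon}, z_n(\theta) e^{n\epsilon})\bigr) = \sum_{i=1}^K \sum_k l^{(n)}_{i,k} \cdot \mathbf{1}_{\{ l^{(n)}_{i,k} \in (z_n(\theta) e^{-n\epsilon}, z_n(\theta) e^{n\epsilon})\}}. \]
Denoting by $N_n^{(\epsilon)}(\theta)$ the counting sum that appears in the statement of the corollary, every term contributing to the probability above has weight $l^{(n)}_{i,k}$ lying in $(z_n(\theta)e^{-n\epsilon}, z_n(\theta)e^{n\epsilon})$, so we obtain the sandwich bound
\[ N_n^{(\epsilon)}(\theta) \cdot z_n(\theta) e^{-n\epsilon} \;\leq\; \mathbb{P}\bigl(L_n \in (z_n(\theta) e^{-n\epsilon}, z_n(\theta) e^{n\epsilon})\bigr) \;\leq\; N_n^{(\epsilon)}(\theta) \cdot z_n(\theta) e^{n\epsilon}. \]

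Next I would take logarithms, divide by $n$, and use $\frac{1}{n}\ln z_n(\theta) = \rho'(\theta)/\rho(\theta)$. This yields
\[ \frac{1}{n}\ln N_n^{(\epsilon)}(\theta) + \frac{\rho'(\theta)}{\rho(\theta)} - \epsilon \;\leq\; \frac{1}{n}\ln \mathbb{P}\bigl(L_n \in \cdots \bigr) \;\leq\; \frac{1}{n}\ln N_n^{(\epsilon)}(\theta) + \frac{\rho'(\theta)}{\rho(\theta)} + \epsilon. \]
Applying Lemma \ref{large deviations} and letting first $n\to\infty$ and then $\epsilon \downarrow 0$ gives
\[ \lim_{\epsilon \downarrow 0} \lim_{n \rightarrow \infty} \frac{1}{n}\ln N_n^{(\epsilon)}(\theta) = \phi(\theta) - \frac{\rho'(\theta)}{\rho(\theta)}. \]
Finally I would simplify the right-hand side using the definitions of $\phi$ and $\psi$:
\[ \phi(\theta) - \frac{\rho'(\theta)}{\rho(\theta)} = \ln \rho(\theta) - (\theta-1)\frac{\rho'(\theta)}{\rho(\theta)} - \frac{\rho'(\theta)}{\rho(\theta)} = \ln\rho(\theta) - \theta\frac{\rho'(\theta)}{\rho(\theta)} = \psi(\theta), \]
which is the desired identity.

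There is no real obstacle here, and in particular no issue with swapping the double limit, since the bounds above hold for every fixed $\epsilon > 0$ and we take the limits in the same order as in Lemma \ref{large deviations}. The only small care is to check that shrinking the interval by any fixed factor $e^{-n\epsilon}$ is harmless because the $\epsilon$ contributions drop out once $\epsilon \downarrow 0$; open versus closed intervals is irrelevant for the same reason.
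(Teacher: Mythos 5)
Your argument is exactly the paper's proof: same size-biased identity, same sandwich bound $N_n^{(\epsilon)} z_n(\theta)e^{-n\epsilon} \leq \mathbb{P}(L_n \in \cdots) \leq N_n^{(\epsilon)} z_n(\theta)e^{n\epsilon}$, same application of Lemma~\ref{large deviations} after taking logarithms, dividing by $n$, and sending $n\to\infty$ then $\epsilon\downarrow 0$. The only cosmetic difference is that you carry out the algebraic reduction $\phi(\theta) - \rho'(\theta)/\rho(\theta) = \psi(\theta)$ explicitly at the end, whereas the paper folds it into the stated bounds.
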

 \begin{proof}
 First note that
  \[\mathbb{P} \left( L_n \in  \left(z_n(\theta) e^{-n\epsilon}, z_n(\theta) e^{n\epsilon} \right) \right)= \sum_{i=1}^K \sum_k 1_{ \left\lbrace l^{(n)}_{i,k} \in \left(z_n(\theta) e^{-n\epsilon}, z_n(\theta) e^{n\epsilon} \right) \right\rbrace} l_{i,k}^{(n)}.\]
Thus
  \begin{eqnarray*}
\sum_{i=1}^K   \sum_k 1_{ \left\lbrace l^{(n)}_{i,k} \in \left(z_n(\theta) e^{-n\epsilon}, z_n(\theta) e^{n\epsilon} \right) \right\rbrace} e^{-n\epsilon} & \leq & z_n(\theta)^{-1} \cdot \mathbb{P} \left( L_n \in  \left(z_n(\theta) e^{-n\epsilon}, z_n(\theta) e^{n\epsilon} \right) \right) \\ & \leq & \sum_{i=1}^K \sum_k 1_{ \left\lbrace l^{(n)}_{i,k} \in \left(z_n(\theta) e^{-n\epsilon}, z_n(\theta) e^{n\epsilon} \right) \right\rbrace}  e^{n\epsilon}.
  \end{eqnarray*}
By Lemma \ref{large deviations} we  derive that
\[\limsup_{n \rightarrow \infty}  \frac{1}{n} \ln \left( \sum_{i=1}^K \sum_k 1_{ \left\lbrace l^{(n)}_{i,k} \in \left(z_n(\theta) e^{-n\epsilon}, z_n(\theta) e^{n\epsilon} \right) \right\rbrace} \right) \leq \psi(\theta) + \epsilon, \]
and \[ \liminf_{n \rightarrow \infty} \frac{1}{n} \ln \left( \sum_{i=1}^K \sum_k 1_{ \left\lbrace l^{(n)}_{i,k} \in \left(z_n(\theta) e^{-n\epsilon}, z_n(\theta) e^{n\epsilon} \right) \right\rbrace} \right) \geq \psi(\theta) - \epsilon\]
and we easily  conclude.
 \end{proof}
 
We moreover have that:
 \begin{lemma} We have that $ \psi(\theta) > 0 $ for all $ \theta \in \mathbb{R} $.
 \end{lemma}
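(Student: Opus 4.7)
The plan is to identify $\psi(\theta)$ with the entropy rate of a tilted Markov chain, so that positivity is equivalent to the tilted chain being non-deterministic---a property forced here by the aperiodicity hypothesis.

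Concretely, introduce the stochastic matrix $\tilde P(\theta) = (\tilde p_{ij}(\theta))$ with $\tilde p_{ij}(\theta) := p_{ij}^\theta v_j(\theta)/(\rho(\theta) v_i(\theta))$ and the probability vector $\tilde \pi_i(\theta) := w_i(\theta) v_i(\theta)$. The right eigenvector equation $A(\theta) v(\theta) = \rho(\theta) v(\theta)$ immediately gives $\sum_j \tilde p_{ij}(\theta) = 1$, while the normalization $w(\theta)^t v(\theta) = 1$ and the left eigenvector equation $w(\theta)^t A(\theta) = \rho(\theta) w(\theta)^t$ show that $\tilde \pi(\theta)$ sums to $1$ and is $\tilde P(\theta)$-stationary.

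I would then establish the identity
\[\psi(\theta) = H(\theta) := -\sum_{i,j}\tilde \pi_i(\theta) \tilde p_{ij}(\theta) \ln \tilde p_{ij}(\theta).\]
Differentiating $\rho(\theta) = w(\theta)^t A(\theta) v(\theta)$, the contributions from $v'(\theta)$ and $w'(\theta)$ cancel using the two eigenvector equations together with the differentiated normalization $(w^t v)' = 0$, leaving
\[\frac{\rho'(\theta)}{\rho(\theta)} = \sum_{i,j} \tilde \pi_i(\theta) \tilde p_{ij}(\theta) \ln p_{ij}.\]
Substituting $\ln \tilde p_{ij}(\theta) = \theta \ln p_{ij} + \ln v_j(\theta) - \ln v_i(\theta) - \ln \rho(\theta)$ into $H(\theta)$ and telescoping the $\ln v_j$ and $\ln v_i$ sums by stationarity of $\tilde \pi$ yields $H(\theta) = \ln \rho(\theta) - \theta \rho'(\theta)/\rho(\theta) = \psi(\theta)$.

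Once the identity is in hand the conclusion is immediate. Entropy is non-negative, and $H(\theta) = 0$ if and only if $\tilde P(\theta)$ is deterministic, i.e.\ each row contains a single $1$. Since $\tilde p_{ij}(\theta) > 0$ precisely when $p_{ij} > 0$, determinism of $\tilde P(\theta)$ forces $Y$ itself to be deterministic; but an irreducible deterministic chain on $K \ge 2$ states is a pure cycle of period $K$, contradicting aperiodicity. The only piece of the argument requiring some bookkeeping is the derivative of $\rho$; the normalization $w^t v = 1$ is what makes the $v'$ and $w'$ cross-terms cancel cleanly, and I do not anticipate other obstacles.
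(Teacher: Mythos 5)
Your proof is correct, and it takes a genuinely different route from the paper. The paper's argument first invokes Corollary \ref{numerous} (the large--deviation count of boxes) to conclude $\psi \geq 0$, then computes $\psi'(\theta) = -\theta\,(\ln\rho)''(\theta)$, notes $\psi(0) = \ln\rho(0) > 0$, and upgrades to strict positivity by a soft argument: $\psi$ is monotone on each half-line, so a zero would force $\psi$ to vanish on a whole interval, contradicting analyticity. Your argument is instead structural and self-contained: you identify $\psi(\theta)$ with the entropy rate $H(\theta)$ of the Doob-transformed (tilted) chain $\tilde P(\theta)$ under its stationary law $\tilde\pi(\theta)$, and the eigenvector-derivative cancellation together with stationarity of $\tilde\pi$ gives the identity $\psi = H$ cleanly. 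Non-negativity is then just non-negativity of entropy, and strict positivity reduces to the fact that $\tilde P(\theta)$ has the same support as $P$, so $H(\theta) = 0$ would make $Y$ deterministic and hence a pure cycle, contradicting aperiodicity (for $K \geq 2$, which is implicit in the paper too since the paper needs $\rho(0) > 1$). What you gain is an explicit probabilistic reading of $\psi$ and an argument that does not lean on the LDP corollary or even on convexity of $\ln\rho$; it also makes transparent exactly where aperiodicity enters, whereas the paper buries it in the assertion $\rho(0) > 1$. What the paper's route gains is brevity given the machinery already developed, since Corollary \ref{numerous} and the convexity of $\ln\rho$ have been established for other purposes anyway. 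Both are correct; yours is the more illuminating of the two.
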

 \begin{proof}
First note that Corollary 1 implies that $ \psi(\theta) \geq 0  $ for all $\theta \in \mathbb{R}  $. Further we have that $ \psi(0) > 0$ since $ \rho(0) > 1$.  Moreover $ \psi^{\prime}(\theta)=-\theta \cdot \ln(\rho(\theta))^{\prime \prime}  $ and thus, by convexity of $ \ln(\rho(\theta)) $,  we have that $ \psi $ is increasing on the interval $ (-\infty,0) $ and decreasing on the interval $ (0, \infty) $. Together with the fact that $ \psi  $ is analytic and thus cannot be zero on any interval, we arrive at $ \psi(\theta) >0 $ for all $ \theta \in \mathbb{R} $.
\end{proof}

We can use the previous  results to gain information about the asymptotic behavior of the sizes of the smallest box and the largest box. 
\begin{lemma}
Let $ \underline{l}^{(n)} $ denote the size of the smallest box at generation $ n $. We then have:
\[\lim_{n \rightarrow \infty} \frac{\ln (\underline{l}^{(n)})}{n}= -\frac{1}{C_*}\]
\end{lemma}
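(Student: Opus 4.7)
The plan is to sandwich $\frac{1}{n}\ln\underline{l}^{(n)}$ between matching upper and lower bounds, using Corollary \ref{numerous} for the upper bound and a Markov–type estimate coming from Lemma \ref{asymptotics} for the lower bound. First I would record the bridge between $-1/C_*$ and quantities we can actually control at finite $\theta$: by definition $-1/C_* = \lim_{\theta \to -\infty} \rho'(\theta)/\rho(\theta) = \lim_{\theta \to -\infty}(\ln\rho)'(\theta)$, and since $\ln\rho$ is convex, an l'Hôpital / secant argument also gives $-1/C_* = \lim_{\theta \to -\infty} \ln\rho(\theta)/\theta = \inf_{\theta<0}(\ln\rho)'(\theta)$. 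This will let me swap between the ``derivative'' form (appearing in $z_n(\theta)$) and the ``chord'' form (appearing in the Markov estimate).

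For the upper bound, fix $\theta < 0$. Because $\psi(\theta)>0$ by the previous lemma, Corollary \ref{numerous} tells us that the number of boxes at generation $n$ with size in $(z_n(\theta)e^{-n\epsilon}, z_n(\theta)e^{n\epsilon})$ grows like $e^{n\psi(\theta)}$, hence is at least one for all $n$ large enough. In particular $\underline{l}^{(n)} \leq z_n(\theta)e^{n\epsilon}$, so
\[
\limsup_{n \to \infty} \frac{\ln \underline{l}^{(n)}}{n} \leq \frac{\rho'(\theta)}{\rho(\theta)} + \epsilon.
\]
Letting first $\epsilon \downarrow 0$ and then $\theta \to -\infty$ yields $\limsup_n \frac{\ln \underline{l}^{(n)}}{n} \leq -\frac{1}{C_*}$.

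For the lower bound, I would use the moment estimate. For $\theta < 0$ and any $L>0$, since $x \mapsto x^{\theta}$ is decreasing,
\[
\#\{(i,k) : l_{i,k}^{(n)} \leq L\} \cdot L^{\theta} \leq \sum_{i=1}^K \mathcal{L}_i^{(n)}(\theta),
\]
and by Lemma \ref{asymptotics} the right-hand side is at most $C(\theta)\rho(\theta)^n$ for some constant $C(\theta)$. Rearranging,
\[
\#\{(i,k) : l_{i,k}^{(n)} \leq L\} \leq C(\theta) L^{-\theta}\rho(\theta)^n.
\]
Now fix $r < -1/C_*$ and take $L = e^{nr}$. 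Since $\ln\rho(\theta)/\theta \to -1/C_* > r$ as $\theta \to -\infty$, I can choose $\theta$ sufficiently negative that $\ln\rho(\theta) - \theta r < 0$, whence the count is $C(\theta)\exp(n(\ln\rho(\theta)-\theta r)) \to 0$ and is therefore $0$ for all large $n$. Thus $\underline{l}^{(n)} > e^{nr}$ eventually, giving $\liminf_n \frac{\ln\underline{l}^{(n)}}{n} \geq r$, and letting $r \uparrow -1/C_*$ finishes the proof.

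The main delicate point is the last step: showing that the choice of $\theta$ making $\ln\rho(\theta)-\theta r<0$ actually exists for every $r<-1/C_*$. This is where the convexity of $\ln\rho$ is essential, since it guarantees that the chord slopes $\ln\rho(\theta)/\theta$ converge to the asymptotic derivative $-1/C_*$ from above as $\theta\to-\infty$; without this the matching between the Markov estimate and the definition of $C_*$ would fail. Everything else is a routine interplay between Lemma \ref{asymptotics} and Corollary \ref{numerous}.
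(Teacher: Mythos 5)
Your proof is correct and follows essentially the same route as the paper: the $\limsup$ bound comes from the large-deviation existence of boxes near $z_n(\theta)$ (Corollary~\ref{numerous} / Lemma~\ref{large deviations}), and the $\liminf$ bound comes from the moment estimate $\sum_i \mathcal{L}_i^{(n)}(\theta)\leq C(\theta)\rho(\theta)^n$ of Lemma~\ref{asymptotics}. Your counting reformulation of the $\liminf$ step is just a repackaging of the paper's direct inequality $(\underline{l}^{(n)})^{\theta}\leq \mathcal{L}_i^{(n)}(\theta)$, and the remark at the end about needing the chord slopes to converge ``from above'' is stronger than what is actually used—mere convergence of $\ln\rho(\theta)/\theta$ to $-1/C_*$ suffices.
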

\begin{proof}
 Let  $ \theta < 0 $ .
By Lemma 1  there exists a $ n_0 $, such that for all $ n \geq n_0 $:
\[(\underline{l}^{(n)})^{\theta} \leq \mathcal{L}_i^{(n)}(\theta) \leq 2 v_1(\theta)(w_i(\theta))^t \rho(\theta)^n. \] 
By some rearrangement we thus get:
\[\liminf_{n \rightarrow \infty} \frac{\ln (\underline{l}^{(n)})}{n}\geq \frac{\ln(\rho(\theta))}{\theta},\]
and we conclude by letting $ \theta $ tend to $ -\infty $. Now let $ \varepsilon > 0 $. By Lemma \ref{large deviations}  there exists a natural number $ n_0(\varepsilon) $ such that $ \underline{l}^{(n)} \leq e^{n\frac{\rho^{\prime}(\theta) } {\rho(\theta)}}e^{\varepsilon n} $ for all $ n \geq n_0(\varepsilon) $, thus 
\[\limsup_{n \rightarrow \infty} \frac{\ln(\underline{l}^{(n)})}{n}\leq \frac{\rho^{\prime}(\theta) } {\rho(\theta)} + \varepsilon, \]
and the result follows by letting $ \varepsilon $ tend to zero.

\end{proof}

In the same way one shows for the largest box that:
\begin{lemma}
Let $ \overline{l}^{(n)} $ denote the size of the largest box at generation $ n $. We then have:
\[\lim_{n \rightarrow \infty} \frac{\ln (\overline{l}^{(n)})}{n}= -\frac{1}{C^*}\]
\end{lemma}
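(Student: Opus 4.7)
The plan is to reproduce the argument for the smallest box with the sign of $\theta$ reversed: now we take $\theta > 0$, so that $x \mapsto x^{\theta}$ preserves order and singles out $\overline{l}^{(n)}$ rather than $\underline{l}^{(n)}$. As before, Lemma~1 will supply the upper bound on the $\limsup$, and Lemma~\ref{large deviations} the lower bound on the $\liminf$.

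For the upper bound, I fix $\theta > 0$ and note that, since $x \mapsto x^{\theta}$ is increasing on $[0,\infty)$,
\[
(\overline{l}^{(n)})^{\theta} \;\leq\; \sum_{i=1}^{K} \mathcal{L}_{i}^{(n)}(\theta).
\]
Lemma~1 bounds the right-hand side above by $C(\theta)\,\rho(\theta)^{n}$ for all $n$ sufficiently large, with $C(\theta)$ a positive constant built from $v_1(\theta)$ and the $w_i(\theta)$. Taking logarithms, dividing by $n\theta$, and passing to $\limsup_n$ yields $\limsup_n \frac{\ln \overline{l}^{(n)}}{n} \leq \frac{\ln \rho(\theta)}{\theta}$. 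I then let $\theta \to \infty$; since $\rho(\theta) \to 0$ at this limit (shown in the proof that $C^{*} < \infty$), l'H\^opital's rule gives $\lim_{\theta \to \infty} \frac{\ln \rho(\theta)}{\theta} = \lim_{\theta \to \infty} \frac{\rho^{\prime}(\theta)}{\rho(\theta)} = -\frac{1}{C^{*}}$, as required.

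For the matching lower bound, I fix a large $\theta > 0$ and $\varepsilon > 0$ and invoke Lemma~\ref{large deviations}: since $\phi(\theta) > -\infty$, the probability $\mathbb{P}\!\left(L_n \in \left(z_n(\theta)e^{-n\varepsilon}, z_n(\theta)e^{n\varepsilon}\right)\right)$ is strictly positive for all $n$ beyond some $n_0(\varepsilon)$. As the collection of box sizes at generation $n$ is deterministic, this forces the existence of at least one box $l_{i,k}^{(n)}$ in that interval, and hence $\overline{l}^{(n)} \geq z_n(\theta)e^{-n\varepsilon} = e^{n(\rho^{\prime}(\theta)/\rho(\theta)\, - \,\varepsilon)}$. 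Taking $\ln$, dividing by $n$, and sending $\varepsilon \downarrow 0$ and then $\theta \to \infty$ gives $\liminf_n \frac{\ln \overline{l}^{(n)}}{n} \geq \lim_{\theta \to \infty} \rho^{\prime}(\theta)/\rho(\theta) = -\frac{1}{C^{*}}$, directly from the definition of $C^{*}$.

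There is no genuine obstacle beyond what was already resolved for the smallest box; the only point that needs care is the reversal of the sign of $\theta$, which is exactly what swaps the roles of $\underline{l}^{(n)}$ and $\overline{l}^{(n)}$ and of $C_{*}$ and $C^{*}$.
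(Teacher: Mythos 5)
Your proof is correct and is exactly the mirror-image argument the paper intends when it states the result with only ``In the same way one shows for the largest box.'' Reversing the sign of $\theta$ so that $(\overline{l}^{(n)})^{\theta}$ is dominated by $\sum_i \mathcal{L}_i^{(n)}(\theta)$ with no inequality flip, getting the $\limsup$ from Lemma~1, and using the existence of a box in the large-deviation window from Lemma~\ref{large deviations} (since the environment is deterministic here) to get the $\liminf$, then sending $\theta \to +\infty$ and invoking l'H\^{o}pital to identify both limits with $-1/C^*$, is precisely the parallel of the smallest-box proof.
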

 
\section{Height and saturation level of Markovian tries}
\subsection{Height of Markovian tries}
\label{Upper Bound}
Recall that   $ H_{m,j}$ for $ j \geq 2 $,  denotes the first generation of boxes at which all the boxes contain strictly less than $ j $ balls when $ m $  balls have been thrown independently.
We shall show how the large deviation estimates of the preceding section enable us to recover the following result.
\begin{theorem}[ \cite{S}] For every $ j \geq 2 $ we have
\[
\lim_{m \rightarrow \infty} \frac{1}{\ln(m)}  H_{m,j} = \frac{j}{-\ln(\rho(j))}  \qquad a.s.  \]

\end{theorem}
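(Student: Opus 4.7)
The plan is to sandwich $H_{m,j}$ between $(1\pm\epsilon)\cdot j\ln(m)/(-\ln(\rho(j)))$ for every fixed $\epsilon>0$ using first- and second-moment estimates on the number of $j$-tuples of balls landing in a common box, then to let $\epsilon\downarrow 0$.

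For the upper bound, set $n_+:=\lceil (1+\epsilon)\,j\ln(m)/(-\ln(\rho(j)))\rceil$. The event $\{H_{m,j}>n_+\}$ is contained in the event that some $j$-subset $S\subset\{1,\ldots,m\}$ has all of its corresponding balls in the same box at generation $n_+$, so Markov's inequality together with Lemma~\ref{asymptotics} gives
\begin{equation*}
\mathbb{P}(H_{m,j}>n_+) \;\leq\; \binom{m}{j}\sum_{i=1}^K \mathcal{L}_i^{(n_+)}(j) \;\leq\; C\,m^j\rho(j)^{n_+}\;=\;C\,m^{-j\epsilon}.
\end{equation*}
This is summable along any geometric subsequence $m_\ell:=\lceil\gamma^\ell\rceil$, so Borel--Cantelli combined with the monotonicity of $m\mapsto H_{m,j}$ yields $\limsup_m H_{m,j}/\ln(m)\leq (1+\epsilon)j/(-\ln(\rho(j)))$ almost surely.

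For the lower bound, set $n_-:=\lfloor (1-\epsilon)\,j\ln(m)/(-\ln(\rho(j)))\rfloor$, let $X_S$ indicate that the $j$ balls indexed by $S\subset\{1,\ldots,m\}$, $|S|=j$, share a common box at level $n_-$, and put $N:=\sum_{|S|=j}X_S$. Then $\{H_{m,j}>n_-\}=\{N\geq 1\}$, so $\mathbb{P}(H_{m,j}\leq n_-)\leq\mathrm{Var}(N)/\mathbb{E}[N]^2$. Decomposing $\mathbb{E}[N^2]$ according to $k:=|S\cap T|$ and using that disjoint index sets give independent events leads, via Lemma~\ref{asymptotics}, to
\begin{equation*}
\frac{\mathrm{Var}(N)}{\mathbb{E}[N]^2} \;\leq\; \frac{C}{\mathbb{E}[N]}\;+\;C\sum_{k=1}^{j-1}m^{-k}\,\Bigl(\frac{\rho(2j-k)}{\rho(j)^2}\Bigr)^{n_-}.
\end{equation*}
Since $\mathbb{E}[N]\asymp m^{j\epsilon}\to\infty$, the first term vanishes, and the task reduces to showing that each remaining summand is $O(m^{-\delta})$ for some $\delta=\delta(\epsilon)>0$.

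The main obstacle is this variance estimate, and it boils down to a single convexity inequality. The exponent (in $\ln(m)$) of the $k$-th summand equals $-k+(1-\epsilon)j\bigl[2-\ln(\rho(2j-k))/\ln(\rho(j))\bigr]$, and a short computation shows this is bounded above by $-\epsilon k$ as soon as $\ln(\rho(2j-k))/(2j-k)<\ln(\rho(j))/j$. But this last inequality is precisely strict decrease of $\alpha\mapsto\ln(\rho(\alpha))/\alpha$ on $(1,\infty)$, whose derivative is $-\psi(\alpha)/\alpha^2$; it therefore follows immediately from the positivity of $\psi$ established in the preceding section. (The uniform environment, in which $\ln(\rho)$ is linear and every convexity inequality degenerates, is the extremal case, showing that no looser estimate would suffice.) Inserting this bound gives $\mathrm{Var}(N)/\mathbb{E}[N]^2=O(m^{-\epsilon})$; a second Borel--Cantelli along the same geometric subsequence together with monotonicity of $H_{m,j}$ produces the matching almost-sure lower bound, and letting $\epsilon\downarrow 0$ along a countable sequence completes the proof.
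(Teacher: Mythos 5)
Your proof is correct, and for the lower bound it takes a genuinely different route from the paper's. The upper bounds are essentially the same first--moment count of boxes (or, equivalently, of $j$-tuples of balls) occupying a common cell at level $n_+$, using $\binom{m}{j}\sum_i\mathcal{L}_i^{(n)}(j)\lesssim m^j\rho(j)^n$ and Lemma~\ref{asymptotics}; the paper organizes the Borel--Cantelli argument by stepping in $n$ with a logarithmic polynomial correction, whereas you step along a geometric subsequence in $m$ and invoke monotonicity of $H_{m,j}$, which amounts to the same bookkeeping. For the lower bound, however, the paper goes through Corollary~\ref{numerous} (the large-deviation count of boxes near a target scale, itself built on G\"artner--Ellis), Poissonizes the number of balls, and inserts equal-sized ``imaginary boxes'' inside many near-extremal cells to manufacture independence of Poisson counts. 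You instead run a direct second-moment argument on $N=\sum_{|S|=j}X_S$: the $k=0$ layer drops out by independence of disjoint ball sets, the diagonal gives $C/\mathbb{E}[N]$, and the cross terms $1\le k\le j-1$ are controlled by the strict monotonicity of $\alpha\mapsto\ln\rho(\alpha)/\alpha$, whose derivative is $-\psi(\alpha)/\alpha^2$; your exponent computation reducing the needed inequality to $\psi>0$ is exactly right. This is shorter and more elementary (no Poissonization, no G\"artner--Ellis, no auxiliary box-counting lemma), and it isolates the role of convexity very cleanly. The trade-off is that your independence-of-disjoint-tuples step is special to the deterministic environment; in the random-environment half of the paper $X_S$ and $X_T$ are correlated through the shared cascade even for disjoint $S,T$, which is precisely why the author develops the martingale/branching-random-walk machinery. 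One minor inaccuracy as an aside: in the uniform (memoryless) case $\ln\rho$ is indeed affine, but $\psi\equiv\ln K>0$ and $\alpha\mapsto\ln\rho(\alpha)/\alpha$ is still strictly decreasing, so the inequality does not degenerate there; your parenthetical remark about extremality should be dropped or rephrased, though it does not affect the proof.
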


Note that we  now  continue to successively throw balls forever, and that the randomness is coming from the way we distribute the balls into the boxes.

 We will first tackle the upper bound. 
 Let $ N^{(n)}_{m,j} $ denote the number of boxes at generation $ n $ containing  $ j $ or more balls when $ m $ balls have been thrown. Note that  $ H_{m,j} < n $ when $ N^{(n)}_{m,j}=0 $. The idea will  be to analyze the asymptotic behavior of $ N_{m,j}^{(n)} $ as $ n $ and $ m $ tend to infinity. We will show that \[  H_{m,j} \leq  \frac{j}{-\ln(\rho(j))} \ln(m) +\textit{O}\left( \ln \ln(m) \right) \quad a.s., \] as $ m $ tends to infinity. Let $  a>\frac{1}{j} $ and define the sequence
 \[x_n:=\rho(j)^{-\frac{n}{j}}n^{-a}. \]
We then have:
\begin{lemma}
\label{poissonboxes}
For almost all $ \omega $, there exists a natural number $ n_0(\omega) $ s.t for all $ n \geq n_0(\omega) $, $ N^{(n)}_{ \lfloor x_n \rfloor,j} =0 $.
\end{lemma}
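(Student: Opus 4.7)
The plan is to apply a first moment / Borel--Cantelli argument, bounding the expected number of boxes of generation $n$ containing at least $j$ balls, using Lemma \ref{asymptotics} to control the $j$-th moment sum.

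First I would fix a box of size $l$ at generation $n$. Given $m$ balls thrown independently and uniformly, the number of balls falling into this box is binomial with parameters $(m,l)$, so the probability that it contains at least $j$ balls is at most $\binom{m}{j} l^j \leq \frac{m^j}{j!} l^j$. Summing this bound over all boxes of generation $n$ gives
\[
\mathbb{E}\!\left[N^{(n)}_{m,j}\right] \leq \frac{m^j}{j!}\sum_{i=1}^K \sum_k \left(l_{i,k}^{(n)}\right)^{j} = \frac{m^j}{j!}\sum_{i=1}^K \mathcal{L}_i^{(n)}(j).
\]

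Next I would invoke Lemma \ref{asymptotics} with $\theta = j$: it provides a constant $C = C(j) = v_1(j)\sum_{i=1}^K w_i(j) > 0$ such that $\sum_{i=1}^K \mathcal{L}_i^{(n)}(j) \sim C \,\rho(j)^n$ as $n\to\infty$. Plugging in $m = \lfloor x_n \rfloor \leq x_n = \rho(j)^{-n/j} n^{-a}$ yields $m^j \leq \rho(j)^{-n} n^{-aj}$, and the $\rho(j)^n$ factors cancel, leaving
\[
\mathbb{E}\!\left[N^{(n)}_{\lfloor x_n\rfloor,j}\right] \leq \frac{C}{j!}\, n^{-aj}\,(1+o(1)).
\]
Since the hypothesis $a > 1/j$ forces $aj > 1$, the right-hand side is summable in $n$.

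Finally, Markov's inequality gives
\[
\mathbb{P}\!\left(N^{(n)}_{\lfloor x_n \rfloor, j} \geq 1\right) \leq \mathbb{E}\!\left[N^{(n)}_{\lfloor x_n\rfloor,j}\right] = O\!\left(n^{-aj}\right),
\]
which is a summable series; the Borel--Cantelli lemma then implies that almost surely only finitely many $n$ satisfy $N^{(n)}_{\lfloor x_n \rfloor, j} \geq 1$. This is exactly the conclusion: there exists a random threshold $n_0(\omega)$ beyond which $N^{(n)}_{\lfloor x_n \rfloor, j}= 0$ for all $n \geq n_0(\omega)$. There is no genuine obstacle here; the only mild point of care is to check the exponent bookkeeping so that the factor $\rho(j)^{-n}\cdot \rho(j)^n$ cancels cleanly and leaves a polynomially summable tail, which is precisely why $x_n$ carries the correction $n^{-a}$ with $a > 1/j$.
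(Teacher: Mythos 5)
Your proof is correct and follows essentially the same route as the paper: bound the probability a box holds at least $j$ balls by $\binom{m}{j}l^j$, sum over boxes to reduce to $\mathcal{L}_i^{(n)}(j)$, invoke Lemma~\ref{asymptotics} to replace that sum by $O(\rho(j)^n)$, cancel against $x_n^j$ to leave a polynomially summable tail $O(n^{-aj})$ with $aj>1$, and finish with Markov plus Borel--Cantelli. The only cosmetic difference is that you keep the $1/j!$ factor in the binomial bound, which the paper discards.
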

\begin{proof}
Let  $B(m, p)$ denote a generic Binomial variable with parameter $ p \in [0,1] $ and $ m \in \mathbb{N} $ and note that
\begin{eqnarray}
\label{Binomial}
 \mathbb{P}(B(m, p) \geq j) \leq  \left(\begin{matrix}
m \\j 
\end{matrix} \right) p^j  \leq m^j  p^j. \end{eqnarray}
Further note that the number of balls in a box of size $ l $ when $ m  $ balls have been thrown is $ B(m,l) $ distributed.
We thus have
\begin{eqnarray*}
\mathbb{E}[ N^{(n)}_{ \lfloor x_n \rfloor,j} ] &=& \sum_{i=1}^K \sum_k \mathbb{P} \left( B \left(\lfloor x_n \rfloor, l_{i,k}^{(n)} \right) \geq j \right) \\ & \leq & \sum_{i=1}^K \sum_k \left(  \lfloor x_n \rfloor l_{i,k}^{(n)} \right)^j
\\  & \leq & \sum_{i=1}^K   x_n^j \mathcal{L}_i^n(j). \end{eqnarray*}
By Lemma \ref{asymptotics} there exists a natural number $ n_1 $ such that for all $ n \geq n_1 $
\[  \mathcal{L}_i^{(n)}(j)\rho(j)^{-n} \leq 2v_1(j)w_i(j)^t. \]
Taking $  c_1(j):=2v_1(j)w_i(j)^t $ we get that
\begin{eqnarray*}
\mathbb{E}[ N^{(n)}_{ \lfloor x_n \rfloor,j} ] \leq \sum_{i=1}^K  c_1(j) x_n^j \rho(j)^n   \leq \sum_{i=1}^K c_1(j)  n^{-aj},
\end{eqnarray*}
for all $ n \geq n_1$. We finally arrive at
\begin{eqnarray*}
\mathbb{E} \left[ \sum_{ n \geq n_1} 1_{ \lbrace N^{(n)}_{ \lfloor x_n \rfloor,j} \geq 1 \rbrace} \right] \leq \mathbb{E} \left[\sum_{ n \geq n_1}  N^{(n)}_{ \lfloor x_n \rfloor,j} \right] \leq  \sum_{ n \geq n_1} c_1(j)  n^{-aj} < \infty  ,
\end{eqnarray*}
 and we conclude by the Borel-Cantelli lemma.
\end{proof}

This lemma will be enough to show that:
\begin{proposition}

\label{upper bound}
For every integer $ j \geq 2 $ we have
\begin{eqnarray*}   \frac{1}{\ln(m)}  H_{m,j} \leq \frac{j}{-\ln(\rho(j))} +\textit{O}\left( \frac{\ln \ln(m)}{\ln(m)} \right)\qquad a.s. \end{eqnarray*}
as $ m $ tends to infinity.
\end{proposition}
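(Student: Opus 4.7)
The plan is to bootstrap Lemma \ref{poissonboxes}, which only controls the particular sample sizes $m=\lfloor x_n\rfloor$, up to arbitrary $m$, by means of two elementary monotonicity observations and a quantitative inversion of the defining relation $x_n=\rho(j)^{-n/j}n^{-a}$.

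First I would record the monotonicities. Under the natural coupling in which $m+1$ balls are obtained by adding an extra ball to the $m$ existing ones, each occupancy count $N^{(n)}_{m,j}$ is non-decreasing in $m$, and consequently so is $H_{m,j}$; moreover $N^{(n)}_{m,j}=0$ forces $H_{m,j}\leq n$. Combined with Lemma \ref{poissonboxes} applied for some fixed $a>1/j$, this yields an almost-sure random threshold $n_0(\omega)$ such that
\[
H_{m,j}(\omega)\;\leq\;n\qquad\text{for every }n\geq n_0(\omega)\text{ and every }m\leq\lfloor x_n\rfloor.
\]

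Next I would invert the relation $m\leq x_n$. Taking logarithms, it is equivalent to
\[
n\;\geq\;\frac{j}{-\ln(\rho(j))}\ln(m)+\frac{aj}{-\ln(\rho(j))}\ln(n).
\]
Choose any constant $c>aj/(-\ln(\rho(j)))$ and set
\[
n(m):=\left\lceil\frac{j}{-\ln(\rho(j))}\ln(m)+c\ln\ln(m)\right\rceil.
\]
Since $\ln(n(m))=\ln\ln(m)+O(1)$ as $m\to\infty$, the value $n(m)$ satisfies the displayed inequality for all sufficiently large $m$, so $\lfloor x_{n(m)}\rfloor\geq m$. Combined with the previous step, this gives $H_{m,j}(\omega)\leq n(m)$ for all $m$ larger than some $m_0(\omega)$, and dividing by $\ln(m)$ delivers the asserted bound.

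The argument is quite direct; the only point requiring a little care is matching the constants $a$ and $c$ so that the $\ln(n(m))$ contribution is absorbed into the $c\ln\ln(m)$ correction, which amounts to picking $c$ strictly above $aj/(-\ln(\rho(j)))$. Beyond Lemma \ref{poissonboxes} and this monotonicity/inversion bookkeeping, no further ingredient is needed.
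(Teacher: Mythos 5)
Your proposal is correct and follows essentially the same route as the paper: both invert the defining relation $x_n=\rho(j)^{-n/j}n^{-a}$ after Lemma \ref{poissonboxes}, using monotonicity of $N^{(n)}_{m,j}$ in $m$ (which the paper leaves implicit) to extend control from the subsequence $m=\lfloor x_n\rfloor$ to all $m$. The only cosmetic difference is that the paper pins down, for each $m$, the unique $n$ with $x_{n-1}\le m<x_n$ and then bounds $\ln(n-1)$ via the auxiliary inequality $n^{-a}\ge\exp(-n(-\ln\rho(j))/(2j))$, whereas you work with an explicitly chosen $n(m)$ and observe directly that $\ln(n(m))=\ln\ln(m)+O(1)$; the two bookkeeping schemes yield the same bound.
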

\begin{proof}
First note that there exists a natural number $ n_1 $ such that $ \forall n \geq n_1 $ we have $ n^{-a} \geq \exp(-n  \frac{-\ln(\rho(j))}{2j}) $ and there exists a natural number $ n_2 $ such that for all $ n \geq n_2 $ the sequence $ x_n $ is increasing. Then using Lemma \ref{poissonboxes} choose an  $ \omega \in \Omega $ for which there exists a natural number $ n_0(\omega) $ such that for all $ n \geq n_0(\omega) $ we have $ N^{(n)}_{ \lfloor x_n \rfloor,j} =0 $.  Let $ n_3 \geq \max(n_0(\omega), n_1+1, n_2+1) $ and note that for each $  m \geq x_{n_3} $ there exists an unique $ n \geq n_3 $ such that $ x_{n-1} \leq m < x_n $. From $  m \leq \lfloor x_n \rfloor$ we have $ H_{m,j} \leq n $. Moreover taking logarithm on both sides of the inequality $ x_{n-1} \leq m $, we derive that
\[n \leq \frac{j}{- \ln(\rho(j))}\ln(m)+  \frac{j}{- \ln(\rho(j))} a \ln(n-1)+1. \]
Since $ n \geq n_1+1 $, we have that 
\[m \geq x_{n-1} = \exp \left((n-1)  \frac{-\ln(\rho(j))}{j} \right) (n-1)^{-a} \geq \exp \left((n-1) \frac{-\ln(\rho(j))}{2j} \right)  .\]
Taking logarithm, we derive that 
\[(n-1) \leq \frac{2j}{-\ln(\rho(j))}\ln(m)\]
and  finally arrive at:
\[H_{m,j} \leq n \leq \frac{j}{- \ln(\rho(j))}\ln(m)+  \frac{j}{- \ln ( \rho(j))} a \ln \left( 2\frac{j}{-\ln(\rho(j))}\ln(m) \right)+1, \]
and we easily conclude.
\end{proof}
\begin{remark}
One could also have derived Proposition \ref{upper bound}, by applying Theorem 6.B in  \cite{Barbour}.
\end{remark}

We now turn to the proof of the lower bound.
Recall that $ H_{m,j} \geq n $, if at generation $ n $ there is at least one box containing  $ j $ or more balls. We thus have that $ H_{m,j} > n  $ if $ N_{m,j}^{(n)} \geq 1 $. As in the proof of the upper bound we want to analyze the asymptotic behavior of $ N_{m,j}^{(n)} $ as $ n $ and $ m $ tend to infinity. We want to take $ n= \frac{j}{-\ln(\rho(j))} \ln(m) +\textit{o}\left( \ln(m) \right) $. Recall that $ \psi(\theta)= \ln(\rho(\theta)) - \frac{\rho^{\prime}(\theta)}{\rho(\theta)}\theta $ and that $ \psi(\theta)>0 $. Let $  \psi(j) > \varepsilon^{\prime} >0$ and define the sequence:
 \[ x_n:= \rho(j)^{\frac{-n}{j}}e^{n  \varepsilon^{\prime}}.  \]

 In this section we  make use of the classical Poissonization trick. Instead of throwing  $ x_n $ balls initially we will throw the random number of balls $ \Poisson(x_n) $. The advantage of this procedure is that if we consider two different boxes $ b $ and $ \tilde{b} $ with size $ l $ respectively $ \tilde{l} $, then the number of balls in $ b $ and $ \tilde{b} $ are independent Poisson random variables with parameter $ x_nl $ respectively $ x_n\tilde{l} $.

\begin{lemma}
\label{tuttituu}

 For almost all $ \omega $ there exists a natural number $ n_0(\omega) $, such that there exists at least one box at generation $ n $ containing $ j $ or more balls when $ \Poisson(x_n) $ balls have been thrown. 
\end{lemma}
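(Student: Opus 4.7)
My plan is to prove, via Borel--Cantelli, that $\mathbb{P}(N^{(n)}_{\Poisson(x_n),j} = 0)$ is summable in $n$, where $N^{(n)}_{\Poisson(x_n),j}$ counts the boxes at generation $n$ holding at least $j$ balls. The Poissonization trick provides crucial independence: the numbers of balls in distinct boxes are independent $\Poisson(x_n l_{i,k}^{(n)})$ variables, whence
\[\mathbb{P}\bigl(N^{(n)}_{\Poisson(x_n),j} = 0\bigr) \;=\; \prod_{i,k}\bigl(1 - \mathbb{P}(\Poisson(x_n l_{i,k}^{(n)}) \geq j)\bigr) \;\leq\; \exp(-S_n),\]
with $S_n := \sum_{i,k} \mathbb{P}(\Poisson(x_n l_{i,k}^{(n)}) \geq j)$. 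It is therefore enough to show $S_n \geq e^{cn}$ for some $c>0$ and all $n$ large.

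To lower bound $S_n$ I would restrict the sum to those boxes whose size lies in a narrow window $(z_n(j)e^{-n\epsilon}, z_n(j)e^{n\epsilon})$ centered at $z_n(j)=e^{n\rho'(j)/\rho(j)}$. Rewriting $\psi(j)=\ln\rho(j)-j\rho'(j)/\rho(j)$ yields the clean identity
\[x_n z_n(j) \;=\; \exp\bigl(n(\varepsilon' - \psi(j)/j)\bigr),\]
so boxes in the window have Poisson parameter in the range $(e^{n(\varepsilon'-\psi(j)/j-\epsilon)}, e^{n(\varepsilon'-\psi(j)/j+\epsilon)})$. Corollary \ref{numerous} guarantees that for any $\delta>0$ and $\epsilon$ small enough, the count of boxes in this window is at least $e^{n(\psi(j)-\delta)}$ for all large $n$. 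Using the elementary lower bounds $\mathbb{P}(\Poisson(\lambda)\ge j)\ge e^{-\lambda}\lambda^j/j!$ when $\lambda\le 1$ and $\mathbb{P}(\Poisson(\lambda)\ge j)\ge 1/2$ for $\lambda$ large, each box in the window contributes at least $c_j\min((x_n l_b)^j,1)$ to $S_n$. Multiplying the count by the per-box lower bound gives
\[S_n \;\geq\; c_j \exp\bigl(n(\min(j\varepsilon',\psi(j)) - \delta - j\epsilon)\bigr),\]
which grows exponentially provided $\delta+j\epsilon<\min(j\varepsilon',\psi(j))$, an inequality that is achievable thanks to the hypotheses $\varepsilon'>0$ and $\psi(j)>0$.

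This gives $\mathbb{P}\bigl(N^{(n)}_{\Poisson(x_n),j}=0\bigr)\le \exp(-c_j e^{cn})$, a doubly-exponentially summable tail, and Borel--Cantelli completes the argument. The main technical nuisance is the dichotomy in the behaviour of the Poisson parameter $x_n l_b$ on the chosen window according to the sign of $\varepsilon'-\psi(j)/j$: when $\varepsilon'<\psi(j)/j$ the parameter vanishes and the per-box probability decays like $(x_n l_b)^j \sim e^{-n(\psi(j)-j\varepsilon')}$, which is exactly compensated by the abundance factor $e^{n\psi(j)}$ from Corollary \ref{numerous}, leaving a surplus $e^{nj\varepsilon'}$; when $\varepsilon'\ge\psi(j)/j$ the parameter diverges, the per-box probability is bounded below by a positive constant, and only the abundance factor $e^{n\psi(j)}$ matters. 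In both regimes $S_n$ grows exponentially, uniformly in the allowed range $\varepsilon'\in(0,\psi(j))$, which yields the lemma.
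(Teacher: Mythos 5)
Your proposal is correct and follows essentially the same strategy as the paper: Poissonize, use Corollary~\ref{numerous} to exhibit roughly $e^{n\psi(j)}$ boxes with size near $z_n(j)=e^{n\rho'(j)/\rho(j)}$, show the probability that none of them holds $\geq j$ balls is doubly-exponentially small, and finish with Borel--Cantelli. The one place where the execution differs: the paper homogenizes the boxes by inserting an ``imaginary box'' of size exactly $z_n$ inside each selected box, making the per-box Poisson parameters identical so that $\mathbb{P}(A_n)\le \mathbb{P}(\Poisson(x_nz_n)<j)^{v_n(j)}$; you instead work directly with the true (non-identical) parameters and use the product bound $\prod_k(1-p_k)\le \exp(-\sum_k p_k)$, then lower-bound $\sum_k p_k$ by the count times a uniform per-box estimate $\mathbb{P}(\Poisson(\lambda)\ge j)\ge c_j\min(\lambda^j,1)$. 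Both routes give the same order of the exponent.

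One small thing your write-up handles \emph{more} carefully than the paper does: the paper asserts that $x_nz_n\to 0$ and then uses $\ln\mathbb{P}(\Poisson(x_nz_n)<j)\sim -(x_nz_n)^j/j!$, but $x_nz_n=e^{n(\varepsilon'-\psi(j)/j-\varepsilon)}$ need not tend to zero for all $\varepsilon'\in(0,\psi(j))$ unless $\varepsilon$ is additionally taken $>\varepsilon'-\psi(j)/j$; your explicit dichotomy (parameter $\to 0$ versus parameter bounded away from zero) removes that implicit constraint and makes the argument valid uniformly over the whole allowed range of $\varepsilon'$. This is a genuine, if modest, improvement in robustness; the paper's argument is still correct in the regime needed downstream (since in Proposition~\ref{propediprop} one ultimately lets $\varepsilon'\to 0$), but your version is cleaner.

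Two minor bookkeeping remarks. First, the exact exponent you display, $c_j\exp(n(\min(j\varepsilon',\psi(j))-\delta-j\epsilon))$, slightly undershoots what the computation gives (the $j\epsilon$ loss only enters on the $j\varepsilon'$ branch, not the $\psi(j)$ branch), but since a valid lower bound is all that is needed this costs nothing. Second, the uniform bound $\mathbb{P}(\Poisson(\lambda)\ge j)\ge c_j\min(\lambda^j,1)$ deserves a one-line justification — it follows from $\mathbb{P}(\Poisson(\lambda)\ge j)\ge e^{-\lambda}\lambda^j/j!$ together with the monotonicity of $\lambda\mapsto\mathbb{P}(\Poisson(\lambda)\ge j)$ — but it is certainly true and the gap is cosmetic.
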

\begin{proof} From Corollary \ref{numerous}, we know that  for all $ \varepsilon > 0 $ there exists a natural number $ n_1(\varepsilon) $ such that
 \begin{equation} \label{schwubedibummm} \frac{1}{n} \ln \left( \sum_{i=1}^K \sum_k 1_{ \left\lbrace l^{(n)}_{i,k} \in \left(e^{n\frac{\rho^{\prime}(j)}{\rho(j)}} e^{-n\varepsilon}, e^{n\frac{\rho^{\prime}(j)}{\rho(j)}} e^{n\varepsilon} \right) \right\rbrace} \right) \geq  \psi(j) - \varepsilon , \end{equation}
 for all $ n \geq n_1(\varepsilon) $. Now, let $ \varepsilon <  \frac{j}{j+1} \varepsilon^{\prime} $ and let $ M_n $ denote the set containing all the boxes with size larger than  $   z_n:= e^{n \left( \frac{\rho^{\prime}(j)}{\rho(j)}-\varepsilon \right)}  $ at generation $ n $. From (\ref{schwubedibummm}) we deduce that \[ \vert M_n \vert \geq v_n(j) := e^{n(  \psi(j)- \varepsilon)}, \] 
 for all $ n \geq n_1(\varepsilon) $.
 For $ n $ large enough we can thus consider the first $ v_n $ boxes in $ M_n $, say $ b_1(n),...,b_{v_n(j)}(n) $ and denote their size with $ l_1(n),...,l_{v_n(j)}(n) $. We then place an imaginary box  $ \textbf{b}_i(n) $ in $ b_i(n) $ for $ 1\leq i \leq v_n(j) $, each of size exactly $ z_n $. When a ball falls into the box $ b_i(n) $ it arrives in the imaginary box $ \textbf{b}_i(n) $ with probability $ \frac{z_n}{ l_i} $.  We want to show that there exists a natural number $ n_0 $ such that for all $ n \geq n_0 $ at least one of the boxes $\textbf{b}_i(n) $, $ 1\leq i \leq v_n(j) $ contains more than $ j $ balls when Poisson($ x_n $) balls have been thrown.

 Let  $ A_n $ denote the event that all boxes $\textbf{b}_i(n) $, $ 1\leq i \leq v_n(j) $ contain strictly less then $ j $ balls when $ \Poisson(x_n) $ balls have been thrown. Since the number of balls in each box are independent Poisson  random variables with parameter $ x_nz_n $, we have:
\[\mathbb{P}(A_n) \leq \mathbb{P}(\Poisson(x_nz_n) < j)^{v_n(j)} \]
and thus
\[\ln(\mathbb{P}(A_n)) \leq v_n(j) \cdot \ln(\mathbb{P}(\Poisson(x_nz_n) < j)).  \]
 Note that since $ \psi(j) > 0 $, the sequence $ x_nz_n $ tends to zero as $ n $ tends to infinity and  that $ \ln(1+x) \sim x $ for small $ x $. We get that for $ n $ large enough
\[ \ln(\mathbb{P}(\Poisson(x_nz_n) < j)) \sim -\frac{(x_nz_n)^j}{j!}, \] 
and thus for large enough $ n $ there exists a constant $ c $ such that:
\[ \ln(\mathbb{P}(A_n)) \leq -cv_n(j)x_n^jz_n^j= -ce^{n(j \varepsilon^{\prime}-(j+1)\varepsilon)}, \]

and we finally arrive at
\[ \mathbb{P}(A_n) \leq e^{   -ce^{n(j \varepsilon^{\prime}-(j+1)\varepsilon)}}. \]
Applying the  Borel-Cantelli lemma, we get the result.
\end{proof}

This lemma will be enough to show the following proposition:
\begin{proposition}
\label{propediprop}
 For every integer $ j \geq 2 $ we have
\begin{eqnarray*} \liminf_{m\rightarrow \infty} \frac{1}{\ln(m)}  H_{m,j} \geq \frac{j}{-\ln(\rho(j))} \quad  a.s. \end{eqnarray*}

\end{proposition}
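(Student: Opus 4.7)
The plan is to couple everything on a single probability space via a rate-one Poisson point process $(N_t)_{t \geq 0}$ on $[0,\infty)$: each arrival is an independent ball thrown into the initial box and distributed through the nested bins. For each $n$, the count $N_{x_n}$ is then $\Poisson(x_n)$-distributed, so Lemma \ref{tuttituu} applies in this coupling: fix $0 < \varepsilon' < \psi(j)$; on a full-measure event $\Omega_{\varepsilon'}$, for every $n$ larger than some $n_0(\omega)$ at least one box at generation $n$ contains at least $j$ balls among the first $N_{x_n}$ arrivals, i.e.\ $H_{N_{x_n},j} > n$. The key monotonicity observation is that, since the balls are thrown in a fixed order, $H_{m,j}$ is non-decreasing in $m$: adding balls can only produce more crowded boxes.

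To de-Poissonize, note that $\rho(j) < 1$ (a consequence of Perron-Frobenius applied to the stochastic matrix $A(1)$ together with strict convexity of $\ln \rho$), so $x_n \to \infty$. By the strong law of large numbers for the Poisson process, $N_t / t \to 1$ almost surely as $t \to \infty$; hence a.s.\ $N_{x_n} \leq 2 x_n$ for all $n$ sufficiently large. On $\Omega_{\varepsilon'}$ intersected with this full-measure event, for every sufficiently large $m$ one may pick $n = n(m)$ to be the largest integer satisfying $2 x_n \leq m$; then $m \geq N_{x_n}$ and $n \geq n_0(\omega)$, so monotonicity yields
\[
H_{m,j} \;\geq\; H_{N_{x_n},j} \;>\; n(m).
\]

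Solving $2 x_n = m$ for $n$, one obtains $n(m) \sim \dfrac{j \ln m}{-\ln \rho(j) + j\varepsilon'}$, whence
\[
\liminf_{m \to \infty} \frac{H_{m,j}}{\ln m} \;\geq\; \frac{j}{-\ln\rho(j) + j\varepsilon'} \qquad a.s.
\]
Letting $\varepsilon'$ range over a sequence decreasing to $0$ and intersecting the corresponding full-measure events delivers the proposition. The only point requiring genuine care is the Poissonization/de-Poissonization bridge: the coupling must be arranged so that increasing the number of thrown balls preserves (and can only augment) each box's content, and one needs the almost sure concentration of $N_{x_n}$ around $x_n$ along the deterministic subsequence $(x_n)$. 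Neither step is hard, but both must be made explicit, whereas the substantive work of producing a crowded box at generation $n(m)$ is already carried out by Lemma \ref{tuttituu}.
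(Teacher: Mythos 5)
Your proof is correct and essentially mirrors the paper's: both take Lemma \ref{tuttituu} as the substantive ingredient and then de-Poissonize by almost-surely controlling the Poisson count along the deterministic sequence $x_n$, using the resulting monotone comparison $H_{m,j}\ge H_{N_{x_n},j}>n$ before solving for $n$ in terms of $m$ and letting $\varepsilon'\downarrow 0$. The only cosmetic difference is that you invoke the strong law of large numbers for the Poisson process to get $N_{x_n}\le 2x_n$ eventually, whereas the paper obtains the analogous bound via Chebyshev's inequality and Borel--Cantelli against an auxiliary sequence $y_n$ carrying a slightly larger exponent; also note that $\rho(j)<1$ follows simply from $\rho(1)=1$, $\rho$ non-increasing, and $\rho(\theta)\to 0$ as $\theta\to\infty$ (together with analyticity), and does not require the strict convexity of $\ln\rho$, which the paper does not assume in the deterministic setting.
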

\begin{proof}
Define the sequence $ y_n= \rho(j)^{- \frac{n}{j}}e^{n  \varepsilon}  $ with $ \varepsilon > \varepsilon^{\prime} $ and notice that the sequence $ \frac{y_{n-1}}{x_n} $ tends to infinity. Thus there exists a natural number $ n_1 $ such that, for all $ n \geq n_1 $, $ \frac{y_{n-1}}{x_n}>3 $. We have:
\begin{eqnarray*}
\mathbb{P}(\Poisson(x_n) \geq \lceil y_{n-1} \rceil) &\leq & \mathbb{P}(\Poisson(x_n)\geq 3x_n) \\ &=& \mathbb{P}(\Poisson(x_n)-x_n \geq 2x_n) \\ & \leq & \mathbb{P}(\vert \Poisson(x_n)-x_n \vert \geq 2x_n) \leq\frac{1}{4x_n}
\end{eqnarray*}
by Chebyshev's inequality. By Borel-Cantelli's lemma we thus derive that for almost all $ \omega  $ there exists a natural number  $ n_2(\omega) $ such that for all $ n \geq n_2(\omega) $ we have that $ \Poisson(x_n) < \lceil y_{n-1} \rceil $. Now by Lemma \ref{tuttituu} we derive that for almost all $ \omega $ there exists a natural number $ n_3(\omega) $ such that for all $ n \geq n_3(\omega) $, we have:
\[N_{ \lceil y_{n-1} \rceil, j } \geq 1. \]
 Now fix such an $ \omega $. Note that  there exists a natural number $ n_4 $ such that $ \forall n \geq n_4 $ the sequence $ y_n $ is increasing. Let $ n_5 \geq \max(n_3(\omega), n_4+1) $ and note that for each $ m \geq y_{n_4} $ there exists a unique $ n \geq n_4 $ such that $ y_{n-1} < m <y_n $. Now, since $ \lceil y_{n-1} \rceil \leq m  $, we have $ H_{m , j} >n $. Further since $  m < y_n $, taking logarithm on both sides, we have that:
\[ \ln(m) < \left(\frac{- \ln(\rho(j))}{j} +\varepsilon \right) n.\]

We thus have that:
\[ \left(\frac{- \ln(\rho(j))}{j} +\varepsilon \right)^{-1} \ln(m) < n < H_{m,j},\]
and 
\[ \frac{1}{m} H_{m,j} \geq \left(\frac{- \ln(\rho(j))}{j} +\varepsilon \right). \]
We conclude by letting $ \varepsilon  $ tend to zero.
\end{proof}
\subsection{Power regimes}
In this section we  study the case when $ j=j(m)=m^{\alpha} $ for $ \alpha \in (0,1) $. In the setting of $ K $-ary tries this corresponds to the case when not only the number of words that have to be stored, but also the storage capacity of the nodes tends to infinity. We aim to show that:
\begin{theorem}
  We  have:

\begin{eqnarray*} \lim_{m \rightarrow \infty}  \frac{1}{\ln(m)}  H_{m,j} = (1-\alpha) C^* \qquad a.s. \end{eqnarray*}
\end{theorem}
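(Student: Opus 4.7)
The proof will mirror the structure of Propositions~\ref{upper bound} and~\ref{propediprop}, namely an upper bound on $H_{m,j}$ obtained through a first-moment computation on $N^{(n)}_{m,j}$ plus Borel-Cantelli, and a matching lower bound by exhibiting, at the critical generation, a single box containing at least $j$ balls. The key change is that with $j=m^{\alpha}$ growing polynomially, only boxes whose size is comparable to the \emph{largest} box $\overline{l}^{(n)}$ can plausibly collect $j$ balls, so that the constant $j/(-\ln\rho(j))$ is replaced by $C^{*}=\lim_{\theta\to\infty}\rho(\theta)/(-\rho^{\prime}(\theta))$. Because we are in a deterministic environment, $\overline{l}^{(n)}$ is deterministic and the final lemma of Section~1.1 yields $\ln\overline{l}^{(n)}/n\to -1/C^{*}$.

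\textbf{Upper bound.} Fix $\epsilon>0$ and set $n_{m}:=\lceil (1-\alpha)C^{*}(1+\epsilon)\ln m\rceil$. Applying (\ref{Binomial}) with the elementary bound $\binom{m}{j}\leq (em/j)^{j}$ and the pointwise estimate $l^{(n)}_{i,k}\leq \overline{l}^{(n)}$ yields
\[
\mathbb{E}\!\left[N^{(n_{m})}_{m,j}\right]\leq K^{n_{m}}\left(\frac{em\,\overline{l}^{(n_{m})}}{j}\right)^{\!j}.
\]
Fix $\delta>0$ so small that $(1+\epsilon)(1-\delta)>1$. The asymptotic for $\overline{l}^{(n)}$ gives $\overline{l}^{(n_{m})}\leq m^{-(1-\alpha)(1+\epsilon)(1-\delta)}$ for large $m$, so with $j=m^{\alpha}$ one has $em\,\overline{l}^{(n_{m})}/j\leq e\,m^{-\eta}$ with $\eta:=(1-\alpha)\bigl((1+\epsilon)(1-\delta)-1\bigr)>0$. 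Since $K^{n_{m}}$ grows only polynomially in $m$ while $(em^{-\eta})^{m^{\alpha}}$ decays super-polynomially, the sequence $\mathbb{E}[N^{(n_{m})}_{m,j}]$ is summable in $m$; Markov's inequality combined with the Borel-Cantelli lemma then gives $H_{m,j}\leq n_{m}$ eventually, almost surely.

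\textbf{Lower bound.} Set $n^{\prime}_{m}:=\lfloor (1-\alpha)C^{*}(1-\epsilon)\ln m\rfloor$ and show that almost surely, for $m$ large, the (deterministic) largest box at generation $n^{\prime}_{m}$ collects at least $j$ balls. Fix $\delta>0$ with $(1-\epsilon)(1+\delta)<1$. The same lemma yields $\overline{l}^{(n^{\prime}_{m})}\geq m^{-(1-\alpha)(1-\epsilon)(1+\delta)}$ for $m$ large, so the mean $\mu_{m}:=m\,\overline{l}^{(n^{\prime}_{m})}$ satisfies $\mu_{m}\geq m^{\alpha+\eta^{\prime}}$ for some $\eta^{\prime}>0$. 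The number of balls in this box is $\mathrm{Binomial}(m,\overline{l}^{(n^{\prime}_{m})})$, and the standard lower-tail Chernoff inequality gives
\[
\mathbb{P}\!\left(B(m,\overline{l}^{(n^{\prime}_{m})})<j\right)\leq \mathbb{P}\!\left(B(m,\overline{l}^{(n^{\prime}_{m})})\leq \mu_{m}/2\right)\leq \exp(-\mu_{m}/8)\leq \exp(-m^{\alpha+\eta^{\prime}}/8),
\]
valid once $\mu_{m}/2>j$. Summability and Borel-Cantelli then give $H_{m,j}>n^{\prime}_{m}$ eventually, a.s. Letting $\epsilon\downarrow 0$ in both estimates completes the proof.

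\textbf{Main obstacle.} There is no substantial analytical obstacle; the only bookkeeping required is to choose the order of $\epsilon$ and $\delta$ so that $(1+\epsilon)(1-\delta)>1$ in the upper bound and $(1-\epsilon)(1+\delta)<1$ in the lower bound, leaving room for the super-polynomial gain that dominates the polynomial factor $K^{n_{m}}$. Notably, the Poissonization device used in Proposition~\ref{propediprop} is unnecessary here, as the single-box Chernoff bound on the largest box already decays like $\exp(-m^{\alpha+\eta^{\prime}})$, which is summable in $m$ without any appeal to cross-box independence.
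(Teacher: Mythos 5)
Your proof is correct, and it takes a genuinely different (and here shorter) route than the paper's. For the upper bound, the paper keeps a free parameter $\theta>0$, uses the tail estimate $\mathbb{P}(B(m,p)\geq j)\leq 2^{k}(mp/j)^{k}$ with $k=\lceil\theta\rceil$ fixed, controls $\sum_{i,k}(l^{(n)}_{i,k})^{\theta}=\sum_i\mathcal{L}_i^{(n)}(\theta)\sim\rho(\theta)^n$ via Lemma~\ref{asymptotics}, runs Borel--Cantelli along the deterministic subsequence $x_n=\rho(\theta)^{-n/(\theta(1-\alpha))}n^{-a}$ and interpolates, and only at the very end sends $\theta\to\infty$ so that $\theta(1-\alpha)/(-\ln\rho(\theta))\to(1-\alpha)C^{*}$. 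You instead take the crude bound ``at most $K^{n}$ boxes, each of size at most $\overline{l}^{(n)}$'' together with $\binom{m}{j}\leq(em/j)^{j}$ at full exponent $j$, and let the super-polynomial decay coming from $j=m^{\alpha}\to\infty$ absorb the polynomial factor $K^{n_m}$; this sidesteps the $\theta$-parametrisation entirely because the largest-box asymptotics $\ln\overline{l}^{(n)}/n\to-1/C^{*}$ already encode the limiting constant. For the lower bound the paper (Lemma~\ref{Lemma6}) also isolates the largest box, but controls its lower binomial tail by Markov's inequality applied to $(B+1)^{-1}$, which only yields a bound decaying exponentially in $n$ (hence polynomially in $m$) and therefore again requires the subsequence-and-interpolation scaffolding; your Chernoff lower tail gives $\exp(-m^{\alpha+\eta'}/8)$, strong enough to sum over $m$ directly. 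Both approaches prove the same theorem; what the paper's buys in exchange for the extra bookkeeping is uniformity with the fixed-$j$ argument of Section~2.1 and an immediate transfer to the random-environment setting of Section~3, where the box sizes are no longer deterministic and one conditions on $\mathcal{F}_n$. (Small remark: the paper's own power-regime lower bound also does not Poissonize, since a single box suffices; your observation that Poissonization is unnecessary here is correct but agrees with the paper rather than improving on it.)
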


We first establish the upper bound. Let  $ \theta > 0 $, $ a>\frac{1}{\theta(1-\alpha)} $, and define the sequence \[x_n:=\rho(\theta)^{-\frac{1}{1-\alpha}\frac{n}{\theta}}n^{-a}. \]  In the notation of  the last section, we then have:
\begin{lemma}
\label{sdftewerg}

For almost all $ \omega $, there exists a natural number $ n_0(\omega) $ s.t for all $ n \geq n_0(\omega) $ we have $ N^{(n)}_{ \lfloor x_n \rfloor,x_{n-1}^{\alpha}} =0 $.
\end{lemma}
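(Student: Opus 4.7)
The plan is to follow the strategy of Lemma \ref{poissonboxes}: bound the expectation of $N^{(n)}_{\lfloor x_n\rfloor,j_n}$ where $j_n:=\lceil x_{n-1}^{\alpha}\rceil$, show that the resulting series in $n$ converges, and conclude by the Borel--Cantelli lemma. The new difficulty compared with Lemma \ref{poissonboxes} is that the threshold $j_n$ now grows with $n$, so one cannot simply apply Lemma \ref{asymptotics} at a single fixed power.

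First I would use the sharper form of the binomial bound, $\mathbb{P}(B(m,p)\ge j)\le\binom{m}{j}p^{j}\le(mp)^{j}/j!$, and sum over boxes to obtain
\[
\mathbb{E}\bigl[N^{(n)}_{\lfloor x_n\rfloor,j_n}\bigr]\ \le\ \frac{x_n^{j_n}}{j_n!}\sum_{i=1}^{K}\sum_{k}\bigl(l^{(n)}_{i,k}\bigr)^{j_n}.
\]
For $n$ large enough that $j_n\ge\theta$, the central step is to estimate the inner sum by factoring $(l^{(n)}_{i,k})^{j_n}\le(\overline{l}^{(n)})^{j_n-\theta}(l^{(n)}_{i,k})^{\theta}$, using $l^{(n)}_{i,k}\le 1$. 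Lemma \ref{asymptotics} simultaneously gives $\sum_i\mathcal{L}^{(n)}_i(\theta)\le C\rho(\theta)^{n}$ and, through $(\overline{l}^{(n)})^{\theta}\le\sum_i\mathcal{L}^{(n)}_i(\theta)$, the largest-box bound $\overline{l}^{(n)}\le C^{1/\theta}\rho(\theta)^{n/\theta}$, which together yield
\[
\sum_{i,k}\bigl(l^{(n)}_{i,k}\bigr)^{j_n}\ \le\ C^{j_n/\theta}\,\rho(\theta)^{nj_n/\theta}.
\]

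Feeding this back and using Stirling in the form $j_n!\ge(j_n/e)^{j_n}$ gives
\[
\mathbb{E}\bigl[N^{(n)}_{\lfloor x_n\rfloor,j_n}\bigr]\ \le\ \left(\frac{e\,C^{1/\theta}\,x_n\,\rho(\theta)^{n/\theta}}{j_n}\right)^{j_n}.
\]
A direct calculation from the definition of $x_n$, combined with $\ln j_n=\alpha\ln x_{n-1}+O(1)=\alpha\ln x_n+O(1)$, shows that the base inside the parentheses is of order $n^{-a(1-\alpha)}$. Since $a(1-\alpha)>0$ and $j_n\to\infty$, the bound decays faster than any polynomial in $n$, so $\sum_n\mathbb{E}[N^{(n)}_{\lfloor x_n\rfloor,j_n}]<\infty$ and the Borel--Cantelli lemma completes the proof.

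The main technical obstacle is obtaining a usable estimate on $\sum_{i,k}(l^{(n)}_{i,k})^{j_n}$ with a moving exponent $j_n\to\infty$: the naive step of dropping $j_n$ to $\theta$ via $l^{(n)}_{i,k}\le 1$ only produces $\sum(l^{(n)}_{i,k})^{j_n}\le C\rho(\theta)^{n}$, which is far too weak to compensate for the growth of $x_n^{j_n}$. The essential trick is to peel off the excess $j_n-\theta$ powers as $(\overline{l}^{(n)})^{j_n-\theta}$ and control them through the self-consistent largest-box estimate derived from Lemma \ref{asymptotics}; this is what converts the problematic factor $\rho(\theta)^{n}$ into $\rho(\theta)^{nj_n/\theta}$ and matches the scale of $x_n$ so that the base in parentheses becomes polynomially small in $n$.
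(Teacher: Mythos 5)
Your proof is correct, but it takes a genuinely different route from the paper's. The paper bounds $\mathbb{P}(B(m,p)\ge j)\le 2^{k}(mp/j)^{k}$ for $j\ge 2(k+1)$, with $k=\lceil\theta\rceil$ held fixed even as the threshold $j_n=x_{n-1}^{\alpha}$ grows; this leaves the exponent on the box sizes fixed at $\theta$, so Lemma \ref{asymptotics} applies directly and yields $\mathbb{E}[N^{(n)}_{\lfloor x_n\rfloor,j_n}]\le c_2(\theta)\,n^{-a\theta(1-\alpha)}$, which is summable precisely because $a>\frac{1}{\theta(1-\alpha)}$. You instead keep the full exponent $j_n$ in $\mathbb{P}(B\ge j_n)\le(mp)^{j_n}/j_n!$, and the price is having to control $\sum_{i,k}(l^{(n)}_{i,k})^{j_n}$ with a moving power; your device of peeling off $(\overline{l}^{(n)})^{j_n-\theta}$ and invoking the self-consistent largest-box bound $\overline{l}^{(n)}\le C^{1/\theta}\rho(\theta)^{n/\theta}$ (both from Lemma \ref{asymptotics} with the fixed $\theta$) is sound, and the Stirling step then produces a base of order $n^{-a(1-\alpha)}$ raised to a power $j_n\to\infty$, giving superpolynomial decay. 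The paper's route is shorter and uses a tailored binomial tail bound with a fixed auxiliary power; yours reuses only the crude bound $\binom{m}{j}p^{j}$ but compensates with the extremal-box estimate, and in fact requires only $a>0$ rather than $a>\frac{1}{\theta(1-\alpha)}$, which a fortiori covers the stated range of $a$. Both are correct; the conceptual trick you highlight — transferring the excess $j_n-\theta$ powers onto $\overline{l}^{(n)}$ — is exactly the right thing to do when one insists on keeping the exponent at $j_n$, and is a nice alternative to the paper's fixed-$k$ binomial inequality.
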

\begin{proof}
Let $k \in \mathbb{N} $ and let $ j \geq 2(k+1) $. A straight forward computation shows that then:
\[\mathbb{P}(B(m,p) \geq j) \leq 2^{k} \cdot \left(\frac{mp}{j}\right)^k.\]

Thus for $ n $ large enough such that $ x_{n-1}^{\alpha} \geq 2 (\lceil \theta \rceil +1) $, we have::
\begin{eqnarray*}
\mathbb{E}[ N^{(n)}_{ \lfloor x_n \rfloor,x_{n-1}^{\alpha}} ] & \leq & \sum_{i=1}^K \sum_k \mathbb{P} \left( B \left(\lfloor x_n \rfloor, l_{i,k}^{(n)} \right) \geq \lceil \theta \rceil \right) \\ & \leq & 2^{\lceil \theta \rceil} \sum_{i=1}^K \sum_k \left(   \frac{\lfloor x_n \rfloor l_{i,k}^{(n)}}{x_{n-1}^{\alpha}} \right)^{\theta}.
 \end{eqnarray*}
Let $ c_1(\theta)= 2^{\lceil \theta \rceil} \cdot 2v_1(\theta)(w_i(\theta))^t $. By Lemma \ref{asymptotics} there exists a natural number $ n_1 $ such that for all $ n \geq n_1 $:
 \begin{equation}
 \mathbb{E}[ N^{(n)}_{ \lfloor x_n \rfloor,x_{n-1}^{\alpha}} ] \leq  \sum_{i=1}^K c_1(\theta) \left( \frac{x_n}{x_{n-1}^{\alpha}} \right)^{\theta} \rho(\theta)^n \leq \sum_{i=1}^K c_2(\theta) n^{-a \theta (1-\alpha)},
 \end{equation}
where $ c_2(\theta)=c_1(\theta)\rho(\theta)^{-\frac{\alpha}{1-\alpha}} $, and we conclude by Borel-Cantelli's lemma.
\end{proof}

The upper bound then follows in the same way as Proposition \ref{upper bound}.
We now turn to the proof of the lower bound. Let $ \varepsilon^{\prime} >0 $ and define the sequence: 
\[y_n:= \exp\left( n \left( \frac{1}{C^*}\frac{1}{(1-\alpha)} + \varepsilon^{\prime} \right)\right).\]
We then have:
\begin{lemma}
\label{Lemma6}
 For almost all $ \omega \in \Omega $ there exists a natural number $ n_0(\omega) $, s.t. $ N_{\lceil y_{n-1} \rceil ,y_n^{\alpha}} \geq 1 $ for all $ n \geq n_0(\omega) $.
\end{lemma}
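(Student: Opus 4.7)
The plan is to adapt the Poissonization argument of Lemma \ref{tuttituu} to the exponentially growing threshold $y_n^\alpha$. Fix a large $\theta > 0$ and a small $\varepsilon > 0$, to be tuned momentarily. By Corollary \ref{numerous}, for every $n$ large enough there exist at least $v_n := e^{n(\psi(\theta) - \varepsilon)}$ boxes at generation $n$ of size at least $\tilde z_n := e^{n(\rho'(\theta)/\rho(\theta) - \varepsilon)}$. In each such big box inscribe an imaginary box of size exactly $\tilde z_n$, and Poissonize by throwing $\Poisson(\lceil y_{n-1}\rceil/2)$ balls in total. The counts in the distinct imaginary boxes are then independent Poisson random variables with common parameter $\lambda_n := (\lceil y_{n-1}\rceil/2)\,\tilde z_n$.

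The key estimate is that $\lambda_n \gg y_n^\alpha$ exponentially in $n$, once $\theta$ is chosen sufficiently large. Writing $r := 1/(C^*(1-\alpha)) + \varepsilon'$, so that $y_n = e^{nr}$, one computes
\[
\frac{1}{n}\ln\frac{\lambda_n}{y_n^\alpha}
\;\longrightarrow\;
(1-\alpha)r + \frac{\rho'(\theta)}{\rho(\theta)} - \varepsilon
\;=\;\frac{1}{C^*} + (1-\alpha)\varepsilon' + \frac{\rho'(\theta)}{\rho(\theta)} - \varepsilon.
\]
Since $\ln\rho$ is convex and $\ln\rho(\theta)/\theta \to -1/C^*$ as $\theta \to \infty$, its derivative $\rho'(\theta)/\rho(\theta)$ also converges to $-1/C^*$. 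Hence, choosing $\theta$ large enough and then $\varepsilon$ small enough, the right-hand side above is bounded below by some $\delta > 0$, so that $\lambda_n \geq y_n^\alpha e^{n\delta}$ for every sufficiently large $n$.

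A Chebyshev bound for the Poisson law then gives $\mathbb{P}(\Poisson(\lambda_n) < y_n^\alpha) \leq 4/\lambda_n$ (as eventually $y_n^\alpha \leq \lambda_n/2$), and by independence the probability that none of the $v_n$ imaginary boxes reaches $y_n^\alpha$ balls is at most $(4/\lambda_n)^{v_n}$, which is summable in $n$. A separate Chebyshev bound yields $\mathbb{P}(\Poisson(\lceil y_{n-1}\rceil/2) > \lceil y_{n-1}\rceil) \leq 2/\lceil y_{n-1}\rceil$, also summable. Two applications of Borel-Cantelli then imply that, almost surely, for every $n$ large enough the Poissonized total is at most $\lceil y_{n-1}\rceil$ and at least one imaginary box contains $\geq y_n^\alpha$ balls. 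A monotone coupling (throw the balls one after another, so counts in each box are nondecreasing in the total thrown) transfers the conclusion to the deterministic throw of $\lceil y_{n-1}\rceil$ balls, proving $N^{(n)}_{\lceil y_{n-1}\rceil,\, y_n^\alpha} \geq 1$ eventually.

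The main technical point is the tuning step above: one must verify that the quantity $\frac{1}{C^*} + (1-\alpha)\varepsilon' + \rho'(\theta)/\rho(\theta) - \varepsilon$ can be made strictly positive, which hinges on $\rho'(\theta)/\rho(\theta) \to -1/C^*$ as $\theta \to \infty$, itself a consequence of convexity of $\ln\rho$ and the definition of $C^*$. Everything else reduces to routine Poisson/Chebyshev estimates and Borel-Cantelli, in the same spirit as Lemma \ref{tuttituu}.
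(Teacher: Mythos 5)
Your proof is correct, but it takes a genuinely different route from the paper's. The paper's argument for Lemma~\ref{Lemma6} is a \emph{single-box} argument with no Poissonization: it looks only at the largest box, whose size eventually exceeds $z_n=e^{n(-1/C^*-\varepsilon)}$, and bounds the probability that this one box misses the threshold $y_n^\alpha$ via the elementary identity $\mathbb{E}[(B(m,p)+1)^{-1}]\le (mp)^{-1}$ and Markov's inequality. The resulting probability is of order $y_n^\alpha/(\lceil y_{n-1}\rceil z_n)\asymp e^{-n((1-\alpha)\varepsilon'-\varepsilon)}$, which is already geometrically summable. You instead transplant the multi-box Poissonized machinery of Lemma~\ref{tuttituu}: pick $e^{n(\psi(\theta)-\varepsilon)}$ near-maximal boxes via Corollary~\ref{numerous}, Poissonize to get independent counts, and apply Chebyshev box by box before raising to the $v_n$-th power. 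The tuning step (choosing $\theta$ large so that $\rho'(\theta)/\rho(\theta)$ is close to $-1/C^*$, then $\varepsilon$ small) is carried out correctly, and the final monotone-coupling de-Poissonization is fine. The comparison is instructive: the multi-box product $(4/\lambda_n)^{v_n}$ in Lemma~\ref{tuttituu} is indispensable there because the threshold $j$ is fixed and a single box does not give a summable tail; here the threshold $y_n^\alpha$ grows exponentially with $n$, so already a \emph{single} box yields $4/\lambda_n\lesssim e^{-n\delta}$, summable on its own. Your proof therefore works but carries along machinery (Corollary~\ref{numerous}, the $v_n$-fold independence, and the Poissonization/de-Poissonization round trip) that the power regime renders unnecessary; the paper's observation that the largest box alone suffices gives a shorter and cleaner proof.
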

\begin{proof}
Let $ \varepsilon < (1 - \alpha) \varepsilon^{\prime} $. Define $ A_n $, the event that the largest box at generation $ n $ contains strictly less than $ y_n^{\alpha} $ balls when $  \lceil y_{n-1} \rceil $ balls have been thrown and recall that  the largest box at generation $ n $ has size larger or equal to $ z_n:= e^{n (-\frac{1}{C^*} - \varepsilon )} $. We then have:
\begin{eqnarray*} \mathbb{P}(A_n)  & \leq &  \mathbb{P}((B(\lceil y_{n-1}  \rceil, z_n ) > y_n^{\alpha}) \\ & \leq & \mathbb{P}((B(\lceil y_{n-1}  \rceil, z_n )+1)^{-1} \leq y_n^{- \alpha}). \end{eqnarray*}
A straight forward computation shows that:
\[ \mathbb{E}[(B(m,p)+1)^{-1}]=\frac{1-(1-p)^{m+1}}{(m+1)p} \leq \frac{1}{mp},\]
and by Markov's  inequality we thus get that
\begin{eqnarray*} \mathbb{P}((B(\lceil y_{n -1} \rceil, z_n )+1)^{-1} \leq y_n^{- \alpha}) &\leq & y_n^{\alpha} \cdot \mathbb{E}[(B(\lceil y_{n -1} \rceil, z_n )+1)^{-1}] \\ &\leq &  \frac{y_n^{\alpha}}{\lceil y_{n-1} \rceil \cdot z_n                                                                                                                                                     }.
\end{eqnarray*}
Taking $ c(\alpha):=y_1^{\alpha}$ we arrive at:
\[\mathbb{P}(A_n ) \leq c(\alpha) \cdot e^{(n-1) (\varepsilon + (\alpha -1) \varepsilon^{\prime})}. \]
and we conclude by Borel-Cantelli's Lemma. 
\end{proof}

The lower bound follows by the usual computations.

 \subsection{Saturation level of Markovian tries}

 Let $ j\geq 1 $ and recall that $ G_{m,j} $ denotes the first generation at which there exists a box containing strictly less than $ j $ balls, when $ m $ balls have been thrown initially. We aim to show that:
\begin{theorem}

 Let $ j\geq 1 $, and recall that $ C_*=\lim_{\substack{\theta \rightarrow -\infty }} \frac{\rho(\theta)}{-\rho^{\prime}(\theta)} $. We then have
\[
\lim_{m \rightarrow \infty} \frac{1}{\ln(m)}  G_{m,j} = C_*  \qquad a.s.  \]

\end{theorem}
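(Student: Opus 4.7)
The plan is to mirror the structure of the proof of Theorem 1, replacing the Laplace-transform estimate of Lemma~\ref{asymptotics} by the asymptotic rate of the smallest box, that is by the statement $ \lim_n \ln(\underline{l}^{(n)})/n = -1/C_* $. The key monotonicity to exploit is that $ G_{m,j} $ is \emph{non-decreasing} in $ m $: adding one ball never decreases a box's count, so at a fixed generation $ n $ the event ``some box contains strictly fewer than $ j $ balls'' shrinks as $ m $ increases. This is opposite to the monotonicity of $ H_{m,j} $, but still compatible with the sparse-sequence-plus-interpolation scheme used in Propositions~\ref{upper bound} and~\ref{propediprop}.

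For the upper bound $ \limsup_m G_{m,j}/\ln(m) \leq C_* $, I would fix $ \delta > 0 $ and set $ x_n := \lceil e^{n/(C_*+\delta)} \rceil $. The lemma on the smallest box yields $ \underline{l}^{(n)} \leq e^{-n/C_* + \varepsilon n} $ for every $ \varepsilon > 0 $ and all $ n $ large; choosing $ \varepsilon < \delta/(C_*(C_*+\delta)) $ produces an $ \eta > 0 $ with $ x_n \underline{l}^{(n)} \leq 2 e^{-\eta n} $. The Markov-type bound $ \mathbb{P}(B(m,l) \geq j) \leq (ml)^j $ from (\ref{Binomial}), applied to the smallest box, then gives
\[
\mathbb{P}\bigl(G_{x_n, j} > n\bigr) \leq \mathbb{P}\bigl(B(x_n, \underline{l}^{(n)}) \geq j\bigr) \leq (x_n \underline{l}^{(n)})^j \leq 2^j e^{-\eta j n},
\]
which is summable. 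Borel--Cantelli gives $ G_{x_n,j} \leq n $ almost surely for $ n $ large, and the monotonicity of $ G $ in $ m $ yields $ G_{m,j} \leq n $ for every $ m \in [x_{n-1}, x_n] $. Since $ \ln(m) \geq n/(C_*+\delta) + O(1) $ on this range, $ G_{m,j}/\ln(m) \leq C_* + \delta + o(1) $; letting $ \delta \downarrow 0 $ closes this half.

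For the lower bound $ \liminf_m G_{m,j}/\ln(m) \geq C_* $, I would put $ y_n := \lceil e^{n/(C_* - \delta)} \rceil $ with $ \delta \in (0, C_*) $. Every box at generation $ n $ has size $ l \geq \underline{l}^{(n)} \geq e^{-n/C_* - \varepsilon n} $ for $ n $ large, so $ y_n l \geq y_n^{c} $ for some $ c = c(\delta, \varepsilon) > 0 $ uniformly in the box. A direct binomial estimate yields $ \mathbb{P}(B(m, l) < j) \leq j (ml)^{j-1} \exp(-ml/2) $, which is in turn bounded by $ \exp(-ml/4) $ once $ ml $ is sufficiently large (depending on $ j $). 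A union bound over the $ K^n $ boxes at generation $ n $, noting that $ K^n \leq y_n^{(C_*-\delta)\ln(K)} $ is polynomial in $ y_n $, then yields
\[
\mathbb{P}\bigl(G_{y_n, j} \leq n\bigr) \leq K^n \cdot \exp\bigl(-y_n^c / 4\bigr),
\]
stretched-exponentially small in $ y_n $ and thus summable in $ n $. A second Borel--Cantelli together with monotonicity of $ G $ in $ m $ and $ \delta \downarrow 0 $ completes the proof.

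The main obstacle, and the reason the argument needs more care than it may first appear, is calibrating the slack $ \varepsilon $ against $ \delta $: the limit $ \ln(\underline{l}^{(n)})/n \to -1/C_* $ only holds up to an $ o(n) $ correction, so one must verify that after absorbing $ e^{\varepsilon n} $ into the estimates the remaining exponent ($ \eta j $ for the upper bound, $ c $ for the lower) stays strictly positive. A pleasant consequence of working only with the smallest box is that no Poissonization is required, since we need neither independence across boxes nor a large-deviation count of ``small'' boxes as in Lemma~\ref{tuttituu}; in the lower bound the stretched-exponential per-box decay $ e^{-y_n^c/4} $ comfortably beats the polynomial cost $ K^n $ of the union bound, which plays the role of Szpankowski's pairwise-prefix argument in the classical setting.
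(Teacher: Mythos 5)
Your proof is correct and reaches the same rate $C_*$, but via a genuinely different route than the paper. Both halves hinge, as the paper's do, on the smallest-box rate $\ln(\underline{l}^{(n)})/n \to -1/C_*$ and on sparse-sequence interpolation via the monotonicity of $G_{m,j}$ in $m$; the difference is in how the per-generation event is estimated. For the lower bound $\liminf G_{m,j}/\ln m \geq C_*$, the paper Poissonizes, expresses the saturation time via the coupon-collector waiting time $\textbf{T}_n = \max_{k\leq K^n}\textbf{exp}(l_k^{(n)})$, appeals to the Gumbel limit of the max of i.i.d.\ exponentials to get a tail bound, and then de-Poissonizes by Chebyshev plus Borel--Cantelli; you replace all of this with a direct union bound over the $\leq K^n$ boxes at generation $n$, using the elementary binomial tail $\mathbb{P}(B(m,l)<j)\lesssim (ml)^{j-1}e^{-ml/2}$, and the observation that $y_n\underline{l}^{(n)} \geq e^{\eta' n}$ gives a stretched-exponential per-box bound that easily kills the polynomial-in-$y_n$ cost $K^n$. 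For the upper bound the paper again runs the Poissonization machinery (Lemma \ref{markov2}) and reduces $j\geq 2$ to $j=1$ via $G_{m,j}\leq G_m$; you apply the bound $\mathbb{P}(B(x_n,\underline{l}^{(n)})\geq j)\leq (x_n\underline{l}^{(n)})^j$ directly to the smallest box, which handles all $j\geq 1$ in one stroke. Net: your argument is more elementary (no Gumbel limit theorem, no de-Poissonization), slightly shorter, and avoids the $j=1$ reduction; what the paper's Poissonization buys is independence across boxes, which it exploits for the coupon-collector picture but which your one-sided union bounds never actually need. The only place demanding care, which you flag explicitly, is calibrating the $\varepsilon$ from the $o(n)$ error in $\ln(\underline{l}^{(n)})/n$ against the slack $\delta$ so that the final exponent stays strictly positive; your chosen thresholds $\varepsilon < \delta/(C_*(C_*+\delta))$ (resp.\ the analogous one for the lower bound) do the job.
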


 We will first study the asymptotic behavior of $ G_{m,1} $ and then extend this to the asymptotic behavior of $ G_{m,j} $.  We will shorthand write $ G_m $ for $ G_{m,1} $.

In order to establish an upper and a lower bound for $ G_m $ it will be useful to study the asymptotic behavior of the number of balls $ T_n $, one needs to throw initially to observe at least one ball in each box of generation $ n $. 

As already mentioned in the introduction, this can be interpreted in terms of the coupon collector's problem. We will see each box of generation $ n $ as a different sort of coupon,  $ T_n $ then corresponds to the number of coupons one has to buy to get at least one of each coupon and the probability to buy a sort of coupon is given by the size of the corresponding box.

It will sometimes  be convenient to use a Poissonization technique to model $ T_n $. Suppose that the collector continues to buy coupons forever (rather than stopping when having a full collection). Moreover suppose that the coupons are bought at times distributed as the arrival times of  a Poisson process with rate $ 1 $. The times a coupon of sort $ i $ is bought are then  the arrival times of independent Poisson processes with rate $ p_i $. Let $ \textbf{T}_n $ denote the waiting time until the collector has completed his collection. We then have \[\textbf{T}_n= \max_{k \leq K^n} \textbf{exp}(l_k^{(n)}), \] where $\textbf{exp}(l_k^{(n)})$ denote independent exponential random variables with parameter $ l_k^{(n)} $, that is to say $ \mathbb{E}[\textbf{exp}(l_k^{(n)})]=\frac{1}{l_k^{(n)}} $. The connection between $ \textbf{T}_n $ and $ T_n $ is then given by 
\[\textbf{T}_n= \sum_{k=1}^{T_n} \textbf{exp}_k(1), \]
where $\textbf{exp}_k(1)  $ are i.i.d. exponential random variables with parameter $ 1 $, independent of $ T_n $. In the same spirit,  let $ T_n^j $ denote the number of balls  one needs to throw initially to observe at least $ j $ ball in each box of generation $ n $. 
Let $ \textbf{T}_n^j $ denote the waiting time until the collector has completed $ j $ copies of his collection. We then have \[\textbf{T}^j_n= \max_{k \leq K^n} \Gamma(j,l_k^{(n)}), \] where $\Gamma(j,l_k^{(n)})$ denote independent Gamma random variables with parameter $ l_k^{(n)} $ and $ j $. As in the case $ j=1 $ we then have
\[\textbf{T}_n^j= \sum_{k=1}^{T_n^j} \textbf{exp}_k(1), \]
where $\textbf{exp}_k(1)  $ are i.i.d. exponential random variables with parameter $ 1 $, independent of $ T_n^j $.

For the lower bound, we want to find an upper bound for  the waiting time $ \textbf{T}_n $ until the collector has completed his collection.

 \begin{lemma}
  \label{markov}

 Let $ \theta < 0 $ and define $ x_n:=e^{n\frac{\ln((\rho(\theta))}{-\theta}}n^{\mu}  $ , $ \mu >2 $. For almost all $ \omega $ there exists a natural number $ n_0(\omega) $, such that for all $ n \geq n_0(\omega) $ :
 \[\textbf{T}_n(\omega) < x_n. \]
 
 \end{lemma}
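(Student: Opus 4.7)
The plan is to run a Borel--Cantelli argument in the Poissonized model. Since the times at which a first ball arrives in each box of generation $n$ are independent exponentials with rates equal to the box sizes (by the thinning property of the underlying Poisson process), we have $\textbf{T}_n = \max_{i,k} \textbf{exp}(l_{i,k}^{(n)})$ with independent summands, and a union bound gives
\[\mathbb{P}(\textbf{T}_n \geq x_n) \leq \sum_{i=1}^K \sum_k e^{-l_{i,k}^{(n)} x_n}.\]
To establish the almost sure statement it suffices to prove that this upper bound is summable in $n$ and then invoke Borel--Cantelli.

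The key step is to trade the exponential tails for $\theta$-th powers of the box sizes so as to bring the Laplace transform $\mathcal{L}_i^{(n)}(\theta)$ into play. For $\theta<0$ the function $y \mapsto e^{-y}y^{-\theta}$ attains its maximum $e^{\theta}(-\theta)^{-\theta}$ at $y=-\theta$, which yields the elementary inequality $e^{-y} \leq C(\theta) y^\theta$ for all $y>0$. Applied with $y = l_{i,k}^{(n)} x_n$ this converts the union bound into
\[\mathbb{P}(\textbf{T}_n \geq x_n) \leq C(\theta)\, x_n^\theta \sum_{i=1}^K \mathcal{L}_i^{(n)}(\theta),\]
and Lemma \ref{asymptotics} then gives that the right-hand sum is bounded, for large $n$, by a positive constant times $\rho(\theta)^n$.

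The definition of $x_n$ is engineered so that $x_n^\theta\rho(\theta)^n = n^{\mu\theta}$, so the whole probability is at most $C'(\theta) n^{\mu\theta}$. For $\mu\lvert\theta\rvert>1$ this series is summable, and the Borel--Cantelli lemma then delivers the result. The hypothesis $\mu>2$ combined with the regime in which the lemma will subsequently be applied (namely $\lvert\theta\rvert$ large, because the eventual route to the saturation theorem lets $\theta\to -\infty$) guarantees this summability.

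The main obstacle is that the population of boxes at generation $n$ grows like $K^n$ while individual boxes can be exponentially small; a naïve estimate such as bounding every exponential by $e^{-\underline{l}^{(n)} x_n}$ and multiplying by $K^n$ loses essentially all of the gain. The inequality $e^{-y}\leq C(\theta)y^\theta$ for $\theta<0$ is what unlocks the argument: it packages all box sizes, large and small, into the Laplace transform $\mathcal{L}_i^{(n)}(\theta)$ whose growth rate is already precisely pinned down by the Perron--Frobenius analysis of the preceding section.
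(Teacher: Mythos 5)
Your proof takes a genuinely different route from the paper's. The paper isolates the smallest box $\underline{l}^{(n)}$, uses the Perron--Frobenius bound $(\underline{l}^{(n)})^{\theta}\leq \mathcal{L}_j^{(n)}(\theta)$ to show $\underline{l}^{(n)}x_n\geq c_2(\theta)n^{\mu}$, and then bounds $\mathbb{P}(\max_{k\leq K^n}\textbf{exp}_k(1)\geq c_2(\theta)n^{\mu})$ via the Gumbel tail (equivalently, the explicit distribution of the maximum of $K^n$ i.i.d.\ exponentials), which yields a decay of order $e^{n\ln K-c_2(\theta)n^{\mu}}$, summable for every $\theta<0$ and every $\mu>1$. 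You instead keep the full union bound over boxes and convert each exponential tail $e^{-l_{i,k}^{(n)}x_n}$ into a power of the box size via $e^{-y}\leq C(\theta)y^{\theta}$, so that the whole sum collapses into $C(\theta)x_n^{\theta}\sum_i\mathcal{L}_i^{(n)}(\theta)$. That is an elegant idea: it avoids extreme-value considerations entirely and ties the estimate directly to the Laplace transform machinery already set up.

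The trouble is that the inequality $e^{-y}\leq C(\theta)y^{\theta}$ is sharp only near $y=-\theta$, whereas in your application $y=l_{i,k}^{(n)}x_n\gtrsim n^{\mu}$ for every box, so you are paying a large price and the resulting bound decays only polynomially, $\mathbb{P}(\textbf{T}_n\geq x_n)\lesssim n^{\mu\theta}$. Borel--Cantelli then requires $\mu\lvert\theta\rvert>1$, which is \emph{not} implied by the lemma's hypotheses ($\theta<0$, $\mu>2$): for $\theta\in(-1/\mu,0)$ your argument does not close. You notice this and wave it away by observing that the lemma will only be invoked with $\theta\to-\infty$ in Proposition~\ref{Upper Bound saturation}; but that is a statement about the application, not a proof of the lemma as written. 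To repair the argument while keeping your Laplace-transform strategy, you would have to exploit that the relevant $y$-values are uniformly at least of order $n^{\mu}$ (e.g., first replace each $l_{i,k}^{(n)}$ by $\underline{l}^{(n)}$, which is exactly what the paper does) so as to upgrade the polynomial decay to a stretched-exponential one.
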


 \begin{proof} 
We use the  Poissonization technique explained in the previous section. We have: 
 \[\mathbb{P}(\textbf{T}_n \geq x_n) = \mathbb{P}(\max_{k \leq K^n} \textbf{exp}(l_k^{(n)})  \geq x_n) \leq \mathbb{P}( \max_{k \leq K^n} \textbf{exp}_k(1) \geq \underline{l}^{(n)}x_n),\]
 where $ \textbf{exp}_k(1) $ are i.i.d exponential r.v. with parameter $ 1 $. Now, for  $ \theta < 0 $ we have:
\[\left( \underline{l}^{(n)} \right)^{\theta} \leq \sum_k \left(l_{j,k}^{(n)} \right)^{\theta}  =  \mathcal{L}^{(n)}_j(\theta). \]
By Lemma 1  there exists a $ n_0 $, such that for all $ n \geq n_0 $:
\[(\underline{l}^{(n)})^{\theta} \leq \mathcal{L}_j^{(n)}(\theta) \leq 2 v_1(\theta)(w_i(\theta))^t \rho(\theta)^n. \]
Let $ c_1(\theta):=\frac{1}{\theta}2\ln(v_1(\theta)(w_i(\theta))^t $, by taking logarithm on both sides and some rearrangement, we  get that there exists a natural number $ n_1 $ such that for all $ n \geq n_1 $:
\[ \frac{n \ln(\rho(\theta))}{\theta} + c_1(\theta) \leq \ln (\underline{l}^{(n)}),  \]
and thus 
\[ e^{n\frac{\ln(\rho(\theta))}{\theta} + c_1(\theta)} \leq   \underline{l}^{(n)}, \]
for all $ n \geq n_1 $. Let $ c_2(\theta) = e^{c_1(\theta)} $, we deduce that for $ n $ large enough:
 \[\mathbb{P}(\textbf{T}_n \geq x_n) \leq \mathbb{P}( \max_{k \leq K^n} \textbf{exp}_k(1) \geq c_2(\theta)n^{\mu}).\]

 Now, recall that  \[ \max_{k \leq K^n}  \textbf{exp}_k(1) - n\ln(K)\]
converges in distribution to the standard Gumbel distribution $ \textbf{G}(1) $ as $ n $ tends to infinity. Thus for large enough $ n $, we have:
\begin{eqnarray*} \mathbb{P}(\textbf{T}_n \geq x_n) & \leq & 2 \cdot \mathbb{P}\left(\textbf{G}(1) \geq c_2(\theta)n^{\mu}-n\ln(K)\right) \\ &=& 2 \cdot \left( 1-\exp\left(-e^{-(c_2(\theta)n^{\mu}-n\ln(K))}\right) \right) \\  & \leq &  2 \cdot e^{-(c_2(\theta)n^{\mu}-n\ln(K))}.\end{eqnarray*}
 We then conclude by Borel-Cantelli's lemma.
 \end{proof}

 Since the times at which a ball is thrown are the arrival times of independent Poisson processes with rate $ 1 $, the number of balls thrown up to time $ x_n $ is Poisson distributed with parameter $ x_n $. Applying Lemma \ref{markov}, we thus get, that:
\begin{cor}
\label{zero boxes}
 For almost all $ \omega $ it exists an integer number $ n_0(\omega) $ such that $ \forall n \geq n_0(\omega)$ there exists no box at generation $ n $ containing no balls when $ \Poisson(x_n) $ balls have been thrown.
\end{cor}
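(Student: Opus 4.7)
The plan is to derive the corollary as an essentially immediate consequence of Lemma \ref{markov} via the Poissonization coupling introduced just above it. Recall that in the Poissonized model the balls are thrown at the successive jump times of a rate-one Poisson process, so the number of balls thrown by time $t$ is $\Poisson(t)$-distributed, and $\textbf{T}_n$ is by definition the first time at which every box at generation $n$ has received at least one ball.

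First I would realize both descriptions on a common probability space: given a rate-one Poisson process $N$ with jump times $(\tau_k)_{k\ge 1}$, declare the $k$-th ball to be thrown at time $\tau_k$. Under this coupling, the number of balls thrown by time $x_n$ equals $N(x_n)\sim\Poisson(x_n)$, and the event that every box at generation $n$ is non-empty after $\Poisson(x_n)$ balls have been thrown coincides with $\{\textbf{T}_n \leq x_n\}$: indeed, by time $\textbf{T}_n$ every box contains at least one ball, and conversely if some box is still empty at time $x_n$ then necessarily $\textbf{T}_n > x_n$.

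Next I would simply invoke Lemma \ref{markov}, which furnishes a full-probability set on which there is an integer $n_0(\omega)$ with $\textbf{T}_n(\omega) < x_n$ for every $n \geq n_0(\omega)$. The coupling above then shows that on this set, for every $n \geq n_0(\omega)$, every box at generation $n$ contains at least one ball after $\Poisson(x_n)$ balls have been thrown, which is precisely the assertion of the corollary.

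There is no real obstacle here; the only point deserving a line of care is to verify that the Poissonized ball count appearing in the corollary is indeed the variable $N(x_n)$ produced by the coupling, which is built into the Poisson process construction recalled just above Lemma \ref{markov}. Once this identification is in place, no further computation is needed.
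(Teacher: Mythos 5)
Your proposal is correct and matches the paper's reasoning: the paper also observes that the number of balls thrown by time $x_n$ is $\Poisson(x_n)$-distributed and then invokes Lemma \ref{markov} to conclude. You simply spell out the coupling between the Poissonized model and the corollary's statement in a bit more detail than the paper does, but the argument is the same.
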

We then arrive at:
\begin{proposition}
\label{Upper Bound saturation}
We have:
\[ \frac{1}{\ln(m)} G_{m,j} \geq C_*+ \textit{O}\left(\frac{\ln\ln(m)}{\ln(m)}\right) \quad a.s. \]
as $ m $ tends to infinity.
\end{proposition}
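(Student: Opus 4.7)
The plan is to first extend Lemma~\ref{markov} and Corollary~\ref{zero boxes} to the general $j\geq 1$ case via the waiting time $\textbf{T}_n^{j}$, then de-Poissonize to turn a Poisson number of balls into a deterministic one, and finally invert the relation and let $\theta\to-\infty$. Throughout I fix $\theta<0$ and $\mu>2$ and set $x_n:=e^{n\ln(\rho(\theta))/(-\theta)}n^{\mu}$ exactly as in Lemma~\ref{markov}.

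The first step is to show that $\textbf{T}_n^{j}\leq x_n$ almost surely for all $n$ large enough, by mimicking the argument of Lemma~\ref{markov}. Writing $\textbf{T}_n^{j}=\max_{k\leq K^n}\Gamma(j,l_k^{(n)})$ and rescaling,
\[\mathbb{P}(\textbf{T}_n^{j}\geq x_n)\leq \mathbb{P}\!\left(\max_{k\leq K^n}\Gamma_k(j,1)\geq \underline{l}^{(n)}x_n\right),\]
where $\Gamma_k(j,1)$ are i.i.d.\ copies of a Gamma$(j,1)$ variable. The lower bound on $\underline{l}^{(n)}$ derived in the proof of Lemma~\ref{markov} still gives $\underline{l}^{(n)}x_n\geq c_2(\theta)n^{\mu}$, and the standard tail $\mathbb{P}(\Gamma(j,1)\geq x)\leq c\,x^{j-1}e^{-x}$ then yields a union bound of order $cK^{n}n^{\mu(j-1)}e^{-c_2(\theta)n^{\mu}}$, which is summable in $n$ as soon as $\mu>1$. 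Borel--Cantelli closes this step.

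Next I de-Poissonize. Using the Poissonized coupling described before Lemma~\ref{markov}, on the event just established one has $T_n^{j}\leq N(\textbf{T}_n^{j})\leq N(x_n)$, where $N$ is the counting process of the rate-$1$ Poisson process, so $N(x_n)\sim\Poisson(x_n)$. A Chebyshev tail bound on $\Poisson(x_n)$ gives $\mathbb{P}(N(x_n)>2x_n)\leq 1/x_n$, and a second Borel--Cantelli argument (using that $x_n$ grows exponentially in $n$) yields $N(x_n)\leq 2x_n$ a.s.\ for $n$ large. Combining, $T_n^{j}\leq 2x_n$ eventually, so throwing $\lceil 2x_n\rceil$ balls deterministically already guarantees at least $j$ balls in every box of generation $n$, and therefore $G_{\lceil 2x_n\rceil,\,j}>n$ a.s.\ eventually.

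Finally, for $m$ large let $n(m)$ be the unique integer with $\lceil 2x_{n(m)-1}\rceil\leq m<\lceil 2x_{n(m)}\rceil$. Monotonicity of $G_{\cdot,j}$ in the number of balls gives $G_{m,j}\geq n(m)$, and taking logarithms in the upper bound on $m$, together with the rough a priori estimate $n(m)=O(\ln m)$, yields
\[n(m)\geq \frac{-\theta}{\ln(\rho(\theta))}\ln(m)-O(\ln\ln(m)).\]
Dividing by $\ln(m)$ and letting $\theta\to-\infty$, the convergence $-\theta/\ln(\rho(\theta))\to C_*$ (by l'H\^opital and the definition of $C_*$, as already used in Section~1.1) delivers the claim. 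The main obstacle is Step~1: the extra factor $n^{\mu(j-1)}$ arising from the Gamma tail (compared with the exponential tail of the $j=1$ case) has to be absorbed by $e^{-c_2(\theta)n^{\mu}}$, which is precisely why the choice $\mu>2$ already built into Lemma~\ref{markov} suffices uniformly in any fixed~$j$; Steps~2 and~3 are then routine.
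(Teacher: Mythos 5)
Your proof is correct, and it differs from the paper's in two interesting ways.

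First, the paper disposes of general $j$ at the outset by the reduction $G_{m,j}\geq G_{\lfloor m/j\rfloor}$, which it justifies via the stochastic bound $T_n^{j}\leq T_n^{(1)}+\cdots+T_n^{(j)}$ (a sum of $j$ independent copies of $T_n=T_n^1$), and then proves the result only for $j=1$ using the exponential/Gumbel tail in Lemma~\ref{markov}. You instead extend Lemma~\ref{markov} directly to the Gamma$(j,1)$ maximum, at the cost of a polynomially worse tail $\mathbb{P}(\Gamma(j,1)\geq x)\leq cx^{j-1}e^{-x}$. This is entirely fine: the polynomial prefactor $n^{\mu(j-1)}$ is negligible next to $e^{-c_2(\theta)n^{\mu}}$ for $\mu>1$, so the union bound plus Borel--Cantelli closes exactly as in the $j=1$ case. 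Your route is slightly more self-contained (you avoid both the reduction argument and the Gumbel approximation) but otherwise of comparable length.

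Second, and more substantively, your de-Poissonization step is cleaner than what the paper writes. The paper introduces $y_n=e^{n\ln\rho(\theta)/(-\theta)}n^{\mu'}$ with $\mu'<\mu$, shows by Chebyshev that $\Poisson(x_n)>\lceil y_{n-1}\rceil$ a.s.~eventually, and then claims $M_{y_{n-1}}^{(n)}=0$; but this last inference actually runs against the monotonicity (throwing \emph{fewer} balls than $\Poisson(x_n)$ can only create \emph{more} empty boxes, not fewer). As written this step needs the opposite inequality, e.g.~$\mu'>\mu$ and a bound of the form $\Poisson(x_n)<y_n$. Your choice to show $N(x_n)\leq 2x_n$ a.s.~eventually and conclude $T_n^{j}\leq 2x_n$, then invert with $n(m)$ defined by $\lceil 2x_{n(m)-1}\rceil\leq m<\lceil 2x_{n(m)}\rceil$, carries the same Chebyshev--Borel--Cantelli content but with the monotonicity pointed the right way, and your a~priori bound $n(m)=O(\ln m)$ (which follows directly from $m\geq 2x_{n(m)-1}$) legitimately controls the $\ln\ln m$ error term before letting $\theta\to-\infty$. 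The final limit $-\theta/\ln\rho(\theta)\to C_*$ via l'H\^opital matches the paper exactly.
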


Note that $T_n^j \leq T_n^{(1)}+...+T_n^{(j)}$ a.s.,
where $T_n^{(1)},...,T_n^{(j)}  $ denote independent copies of $ T_n $. Further we have $ G_{\lfloor \frac{m}{j} \rfloor } \geq  n$ a.s. which implies that $ \lfloor \frac{m}{j} \rfloor > T_n $ a.s. and thus $ T_n^j \leq m $ a.s. But then $ G_{m,j} \geq n $ a.s. and we conclude that $ G_{m,j} \geq G_{\lfloor \frac{m}{j} \rfloor} $ a.s. It will be thus enough to show the statement for $ G_m $. 

 \begin{proof}[of Proposition \ref{Upper Bound saturation}]
Define the sequence $ y_n= e^{n\frac{\ln((\rho(\theta))}{-\theta}}n^{\mu^{\prime}} $ with $  \mu^{\prime} < \mu$ and note that the sequence $ \frac{y_{n}}{x_n} $ tends to zero. Thus there exists a natural number $ n_0$ such that for all $  n \geq n_0 $, we have $ \frac{y_{n}}{x_n}< \frac{1}{2} $. Thus:\begin{eqnarray*}
\mathbb{P}(\Poisson(x_n) \leq \lfloor y_{n} \rfloor) &\leq & \mathbb{P}(\Poisson(x_n)\leq \frac{x_n}{2}) \\ &= & \mathbb{P}(\Poisson(x_n)-x_n \leq - \frac{x_n}{2}) \\ & \leq & \mathbb{P}(\vert \Poisson(x_n)-x_n \vert \geq \frac{x_n}{2}) \leq\frac{4}{x_n}
\end{eqnarray*}
by Chebyshev's inequality. By Borel-Cantelli's lemma we thus derive that for almost all $ \omega  $ there exists a natural number  $ n_1(\omega) $ such that for all $ n \geq n_1(\omega) $ we have that $ \Poisson(x_n) > \lceil y_{n-1} \rceil $. 

Now, let  $ M_{m}^{(n)} $ denote the number of boxes at generation $ n $ containing zero balls when $ m $ balls have been thrown. By Corollary \ref{zero boxes} we deduce that for almost all $ \omega $ there exists a natural number  $ n_2(\omega) $, such that for all $ n \geq n_2(\omega) $, we have $M_{y_{n-1}}^{(n)} = 0 $.
 Fix such an $ \omega $ and note that since the sequence $ (y_n) $ is increasing, for each $ m \geq y_{n_2(\omega)} $ there exists a unique $ n \geq n_2(\omega) $ such that $ y_{n-1} <  m \leq x_{n} $. Since $  \lceil y_{n-1} \rceil \leq m $ we have  $ G_m > n-1  $. Now, by taking logarithm on both sides of the inequality $ m \leq x_{n} $ and some rearrangement, we get that:
 \[n \geq \frac{-\theta}{\ln(\rho(\theta))}\ln(m)-\mu\ln(n). \]
From the inequality $ m > x_{n-1} \geq e^{(n-1)\frac{\ln(\rho(\theta))}{-\theta} }$, we get that $ \frac{\theta}{\ln(\rho(\theta))}\ln(m) \geq n $ and thus:
\begin{eqnarray}
\label{Boiler}
G_m   > n -1 \geq \frac{-\theta }{\ln(\rho(\theta))}\ln(m)-\mu\ln\left(\frac{-\theta }{\ln(\rho(\theta))}\ln(m)\right)-1, \end{eqnarray}
and we are left to show that:
\[\lim_{\substack{\theta \rightarrow -\infty }} \frac{ -\theta}{\ln(\rho(\theta))} = C_*.\]
Indeed by l'Hôpital's rule, we have that:
\[\lim_{\substack{\theta \rightarrow -\infty }}\frac{ -\theta}{\ln(\rho(\theta))}= \lim_{\substack{\theta \rightarrow -\infty }} \frac{\rho(\theta)}{-\rho^{\prime}(\theta)} =C_*, \]
and we conclude.
 \end{proof}

\begin{remark}
We could have gained the same result by applying Theorem 6.E in \cite{Barbour}.
\end{remark}

For the proof of the upper bound, we want to find an lower bound for  the number of balls $ T_n $ one needs to throw initially to observe at least one ball in each box of generation $ n $.

\begin{lemma}
  \label{markov2}

 Let $ \theta < 1 $ and define $ x_n:=e^{n\left(\frac{-\rho^{\prime}(\theta)}{\rho(\theta)}-\varepsilon^{\prime}\right)} $ , $ \varepsilon^{\prime} > 0 $. For almost all $ \omega $ there exists a natural number $ n_0(\omega) $, such that for all $ n \geq n_0(\omega) $ :
 \[\textbf{T}_n(\omega) > x_n. \]
 
 \end{lemma}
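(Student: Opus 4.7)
The approach is to dualize Lemma \ref{markov}: rather than bounding the smallest box size from below (which forces every coupon to be collected quickly), I will exhibit exponentially many boxes of moderate size and argue that the \emph{maximum} of their independent Poissonized first-arrival times must exceed $x_n$ with overwhelming probability. Independence of the first-arrival times $\textbf{exp}(l_k^{(n)})$ across boxes at a fixed generation $n$ is free from Poisson thinning of the ball-throwing process.

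Begin by fixing $\varepsilon\in\bigl(0,\min(\varepsilon',\psi(\theta))\bigr)$, possible since $\psi(\theta)>0$ by Lemma 6. Corollary \ref{numerous} then supplies, for all $n$ sufficiently large, a set $M_n$ of at least $e^{n(\psi(\theta)-\varepsilon)}$ boxes at generation $n$ whose sizes lie in $\bigl(z_n(\theta)e^{-n\varepsilon},z_n(\theta)e^{n\varepsilon}\bigr)$; in particular $l_k^{(n)}\leq z_n(\theta)e^{n\varepsilon}$ for each $k\in M_n$. Restricting $\textbf{T}_n=\max_{k\leq K^n}\textbf{exp}(l_k^{(n)})$ to this subset, independence together with the monotonicity of $p\mapsto 1-e^{-px_n}$ yields
\[
\mathbb{P}(\textbf{T}_n\leq x_n)\leq\prod_{k\in M_n}\bigl(1-e^{-l_k^{(n)}x_n}\bigr)\leq\bigl(1-e^{-z_n(\theta)e^{n\varepsilon}x_n}\bigr)^{|M_n|}.
\]
The decisive algebraic cancellation is
\[
z_n(\theta)\,e^{n\varepsilon}\cdot x_n=\exp\!\bigl(n\tfrac{\rho'(\theta)}{\rho(\theta)}+n\varepsilon-n\tfrac{\rho'(\theta)}{\rho(\theta)}-n\varepsilon'\bigr)=e^{n(\varepsilon-\varepsilon')},
\]
which tends to $0$ since $\varepsilon<\varepsilon'$. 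Hence $1-e^{-z_n(\theta)e^{n\varepsilon}x_n}\leq 2e^{n(\varepsilon-\varepsilon')}$ for $n$ large, and the bound becomes at most $\bigl(2e^{n(\varepsilon-\varepsilon')}\bigr)^{e^{n(\psi(\theta)-\varepsilon)}}$. Because $\varepsilon<\varepsilon'$ and $\psi(\theta)>\varepsilon$, this decays double-exponentially in $n$ and is therefore summable, so a Borel--Cantelli argument concludes that $\textbf{T}_n(\omega)>x_n$ for all sufficiently large $n$, almost surely.

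The only subtle step --- more a point of care than an obstacle --- is the calibration of $\varepsilon$ to simultaneously satisfy $\varepsilon<\varepsilon'$ (forcing the rate-size product to vanish) and $\varepsilon<\psi(\theta)$ (keeping $|M_n|$ exponentially large). Both can always be arranged: if $\varepsilon'\geq\psi(\theta)$ one first shrinks $\varepsilon'$, which only strengthens the statement to be proved. The structural input is thus the strict positivity $\psi(\theta)>0$ together with the exponential lower bound on the number of moderate-sized boxes supplied by Corollary \ref{numerous}; everything else is a routine independence plus Borel--Cantelli computation mirroring the upper-bound arguments earlier in the paper.
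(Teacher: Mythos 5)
Your proof is correct but takes a genuinely different route from the paper's. The paper's proof of Lemma~\ref{markov2} is much more economical: it simply observes that $\textbf{T}_n \geq \textbf{exp}(\underline{l}^{(n)})$, the waiting time of the single smallest box, and then uses Lemma~\ref{large deviations} to get the deterministic upper bound $\underline{l}^{(n)} \leq e^{n(\rho'(\theta)/\rho(\theta) + \varepsilon)}$ for large $n$, which yields $\mathbb{P}(\textbf{T}_n \leq x_n) \leq \mathbb{P}(\textbf{exp}(1) \leq \underline{l}^{(n)} x_n) \leq e^{-n(\varepsilon' - \varepsilon)}$, already summable. You instead invoke Corollary~\ref{numerous} to produce exponentially many boxes of moderate size and exploit the independence of their Poissonized first-arrival times to get a double-exponential bound $(1 - e^{-e^{n(\varepsilon - \varepsilon')}})^{|M_n|}$. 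This is a valid argument and mirrors the "many imaginary boxes" technique the paper itself uses for the height lower bound (Lemma~\ref{tuttituu}), but here it is overkill: the multiplicity and independence buy you a faster decay that Borel--Cantelli never needs, since the single-box bound is already summable. Two small bookkeeping points: Corollary~\ref{numerous} as stated is an iterated limit, so what its proof actually delivers for a fixed $\varepsilon$ is $|M_n| \geq e^{n(\psi(\theta) - 2\varepsilon)}$ (harmless here, but worth saying); and the factor $2$ in your bound on $1 - e^{-u}$ is unnecessary since $1 - e^{-u} \leq u$ outright. Neither affects the conclusion.
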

 \begin{proof}
 We use the Poissonization technique explained in the last section. We have:
 \[\mathbb{P}(\textbf{T}_n \leq x_n) = \mathbb{P}(\max_{k \leq K^n} \textbf{exp}(l_k^{(n)})  \leq x_n) \leq \mathbb{P}(\textbf{exp}(1)\leq \underline{l}^{(n)}x_n).\]
 Now, by Lemma \ref{large deviations}  there exists for each $ \varepsilon > 0  $ a natural number $ n_0(\varepsilon) $ such that $ \underline{l}^{(n)} \leq e^{n \left( \frac{\rho^{\prime}(\theta) } {\rho(\theta)} + \varepsilon \right)} $ for all $ n \geq n_0(\varepsilon) $. Let $ \varepsilon < \varepsilon^{\prime} $. We then have for each $ n \geq n_0(\varepsilon) $:
\[\mathbb{P}(\textbf{T}_n \leq x_n) \leq \mathbb{P}(\textbf{exp}(1)\leq 2e^{-n\varepsilon})=1-\exp(-e^{-n(\varepsilon^{\prime}-\varepsilon)} ) \leq e^{-n(\varepsilon^{\prime} - \varepsilon)} .\]
We conclude by Borel-Cantelli's lemma.
 \end{proof}

Since the times at which a ball is thrown are the arrival times of independent Poisson processes with rate $ 1 $, the number of balls thrown up to time $ x_n $ is Poisson distributed with parameter $ x_n $. Applying Lemma \ref{markov2}, we thus get that:
\begin{cor}
\label{zero boxes2}
 For almost all $ \omega $ it exists a natural number $ n_0(\omega) $ such that $ \forall n \geq n_0(\omega)$ there is at least one box at generation $ n $ containing zero balls when $ \Poisson(x_n) $ balls have been thrown.
\end{cor}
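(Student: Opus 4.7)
The plan is to obtain Corollary \ref{zero boxes2} as an immediate consequence of Lemma \ref{markov2}, in exactly the same manner as Corollary \ref{zero boxes} is obtained from Lemma \ref{markov}. The key point is that the Poissonization setup described before Lemma \ref{markov} produces a natural coupling between the waiting time $\textbf{T}_n$ and the number of balls thrown by a deterministic time.

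First I would recall the construction: balls are thrown at the arrival times of a rate-$1$ Poisson process, so the number of balls that have been thrown by time $x_n$ is exactly $\Poisson(x_n)$. Under this coupling, the event $\{\textbf{T}_n > x_n\}$ coincides precisely with the event that at time $x_n$ the collector has not yet completed his collection, i.e.\ that at least one box at generation $n$ still contains zero balls when $\Poisson(x_n)$ balls have been thrown. This is the same event-identity used implicitly in the derivation of Corollary \ref{zero boxes}, only with the inequality reversed.

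Then I would simply invoke Lemma \ref{markov2}: for almost every $\omega$, there exists $n_0(\omega)$ such that $\textbf{T}_n(\omega) > x_n$ for all $n \geq n_0(\omega)$. Translating this back via the coupling yields the almost sure existence of at least one empty box at generation $n$ among the $\Poisson(x_n)$ balls thrown, for every $n \geq n_0(\omega)$, which is the statement of the corollary.

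There is no genuine obstacle here since the essential probabilistic work, namely the large deviation control of the size of the smallest box together with an extreme-value estimate for the associated exponential waiting times, was already carried out in Lemma \ref{markov2}. The only care needed is to make sure the Poissonization is stated in the same direction as in the preceding corollary, so that the inequality $\textbf{T}_n > x_n$ translates cleanly into the existence of an empty box at time $x_n$.
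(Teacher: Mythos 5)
Your proposal matches the paper's argument exactly: the paper likewise invokes the rate-$1$ Poisson arrival coupling so that the number of balls thrown by time $x_n$ is $\Poisson(x_n)$, notes that $\textbf{T}_n > x_n$ is precisely the event that some box is still empty at that time, and then applies Lemma \ref{markov2}. Nothing is missing and nothing is different.
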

We now tackle the proof of the main result in this section.
\begin{proposition}
\label{Lower Bound saturation}
We have:
\[\limsup_{m \rightarrow \infty} \frac{1}{\ln(m)} G_m \leq C_*  \quad a.s.\]

\end{proposition}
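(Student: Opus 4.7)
The plan is to mirror the structure of the proof of Proposition \ref{Upper Bound saturation}, but now using the \emph{lower} bound on the waiting time from Lemma \ref{markov2} and Corollary \ref{zero boxes2} in place of the upper bound. Fix $\theta < 1$ and $\varepsilon' > 0$, and recall that
\[x_n := e^{n(-\rho'(\theta)/\rho(\theta) - \varepsilon')}.\]
Since $C_* = \lim_{\theta \to -\infty} \rho(\theta)/(-\rho'(\theta))$, for $\theta$ sufficiently negative $-\rho'(\theta)/\rho(\theta)$ is a positive number approximately $1/C_*$, so $x_n$ grows exponentially. The basic input is the observation that $G_m$ is nondecreasing in $m$: adding more balls can only push the first empty-box generation further down the tree. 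In particular, Corollary \ref{zero boxes2} yields, for almost every $\omega$, a natural number $n_0(\omega)$ such that $G_{\Poisson(x_n)} \leq n$ for all $n \geq n_0(\omega)$.

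To convert this Poissonised statement into one for deterministic $m$, I would de-Poissonise from below. Put $y_n := \lfloor x_n/2 \rfloor$. Chebyshev's inequality gives
\[\mathbb{P}(\Poisson(x_n) < y_n) \leq \mathbb{P}\bigl(|\Poisson(x_n) - x_n| \geq x_n/2\bigr) \leq 4/x_n,\]
which is summable in $n$, so by Borel--Cantelli one has $\Poisson(x_n) \geq y_n$ almost surely eventually. Monotonicity of $G_m$ then upgrades the previous step to $G_{y_n} \leq G_{\Poisson(x_n)} \leq n$ eventually, a \emph{deterministic} upper bound on the argument of $G$.

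Given $m$ large enough, choose the unique $n$ with $y_{n-1} < m \leq y_n$; then $G_m \leq G_{y_n} \leq n$ by monotonicity, and taking logarithms in $y_{n-1} < m$ produces an estimate of the form
\[n \leq \frac{\ln m}{-\rho'(\theta)/\rho(\theta) - \varepsilon'} + O(1),\]
so $\limsup_{m \to \infty} G_m/\ln m \leq 1/(-\rho'(\theta)/\rho(\theta) - \varepsilon')$ on the corresponding almost sure set. Finally I would send $\theta \to -\infty$ and $\varepsilon' \downarrow 0$ (along a countable sequence, intersecting the countably many null sets in one step) and invoke $-\rho'(\theta)/\rho(\theta) \to 1/C_*$ (which follows from the definition of $C_*$, exactly as in the l'H\^opital computation at the end of Proposition \ref{Upper Bound saturation}) to conclude that the right-hand side can be brought arbitrarily close to $C_*$.

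The only mildly delicate step is the simultaneous-in-$\theta$ statement: the a.s.\ event from Corollary \ref{zero boxes2} depends on $\theta$, so one must be careful to fix in advance a countable family of parameters (e.g.\ $\theta = -k$, $\varepsilon' = 1/\ell$ for $k,\ell \in \mathbb{N}$) before intersecting the full-measure events. Apart from this standard bookkeeping, the argument is essentially a direct dualisation of the proof of Proposition \ref{Upper Bound saturation}.
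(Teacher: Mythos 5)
Your proof is correct and follows essentially the same route as the paper: Poissonize via Lemma \ref{markov2} and Corollary \ref{zero boxes2}, de-Poissonize with Chebyshev plus Borel--Cantelli, pass to deterministic $m$ in a geometric window, and finally send $\theta \to -\infty$ and $\varepsilon' \downarrow 0$ over a countable grid. If anything your account is tidier: by placing the comparison sequence $y_n = \lfloor x_n/2 \rfloor$ on the \emph{lower} side of $x_n$ and invoking the monotonicity of $m \mapsto G_m$, the bound $\Poisson(x_n) \geq y_n$ gives $G_{y_n} \leq G_{\Poisson(x_n)} \leq n$ directly, whereas the paper's displayed inequalities place $y_n$ above $x_n$ and conclude $\Poisson(x_n) < \lfloor y_n \rfloor$, which as literally written points in the unhelpful direction and needs the same monotonicity fix you supply.
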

\begin{proof}
Define the sequence $ y_n= e^{n \left(\frac{-\rho^{\prime}(\theta)}{\rho(\theta)}-\varepsilon \right)}  $ with $ \varepsilon^{\prime} > \varepsilon $ and notice that the sequence $ \frac{y_{n}}{x_n} $ tends to infinity. Thus there exists a natural number $ n_1 $ such that for all $ n \geq n_1 $ $ \frac{y_{n}}{x_n}> 3 $. We have:
\begin{eqnarray*}
\mathbb{P}(\Poisson(x_n) \geq \lfloor  y_{n} \rfloor) &\leq & \mathbb{P}(\Poisson(x_n) \geq 3x_n) \\ &= & \mathbb{P}(\Poisson(x_n)-x_n \geq 2x_n) \\ & \leq & \mathbb{P}(\vert \Poisson(x_n)-x_n \vert \geq 2x_n) \leq \frac{1}{4x_n},
\end{eqnarray*}
by Chebyshev's inequality. By Borel-Cantelli we thus derive that for almost all $ \omega  $ there exists a natural number  $ n_2(\omega) $ such that for all $ n \geq n_2(\omega) $ we have that $ \Poisson(x_n) < \lfloor y_{n} \rfloor$. Now by Corollary \ref{zero boxes2} we derive that for almost all $ \omega $ there exists a natural number $ n_3(\omega) $ such that for all $ n \geq n_3(\omega) $, we have:
\[M_{ \lfloor y_{n} \rfloor, j } =0. \]
 Now fix such an $ \omega $. Note that  there exists a natural number $ n_4 $ such that $ \forall n \geq n_4 $ the sequence $ y_n $ is increasing. Let $ n_5 \geq \max(n_3(\omega), n_4) $ and note that for each $ m \geq y_{n_4} $ there exists a unique $ n \geq n_4 $ such that $ y_{n} < m <y_{n+1} $. Now, since $ \lfloor y_{n-1} \rfloor \leq m  $, we have $G_m < n $. Further since $  y_n <m $, taking logarithm on both sides, we have that:
 \[\frac{1}{(\frac{-\rho^{\prime}(\theta)}{\rho(\theta)} - \varepsilon^{\prime} )}\ln(m) \geq  n > G_m.\]
 We thus derive that
 \[\limsup_{m \rightarrow \infty} \frac{1}{\ln(m)} G_m \leq \frac{1}{\frac{-\rho^{\prime}(\theta)}{\rho(\theta)} -\varepsilon},  \]
 and we conclude by letting $ \varepsilon  $ tend to zero and $ \theta $ tend to $ -\infty $.
\end{proof}
Now, let $ j \geq 2 $ and note that the first generation when  there exists a box containing no ball when $ m  $ balls have been thrown is larger then the first generation  at which there exists a box containing less than $ j $ balls when $ m  $ balls have been thrown. That is $ G_{m,j} \leq G_m $ and we derive that:
 \begin{proposition}

Let $ j \geq 1 $. We have:
\[\liminf_{m \rightarrow \infty} \frac{1}{\ln(m)} G_{m,j} \leq C_* \quad a.s. \]

\end{proposition}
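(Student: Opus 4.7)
The plan is to exploit the pointwise domination $G_{m,j}\leq G_m$ almost surely, which the paper has just noted immediately before the statement, and combine it with the upper bound for the $j=1$ case established in Proposition \ref{Lower Bound saturation}.

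First I would justify the inequality $G_{m,j}\leq G_m$ carefully: by definition, $G_m=G_{m,1}$ is the first generation $n$ at which some box contains strictly fewer than $1$ ball, i.e.\ contains no ball at all. Any such empty box contains strictly fewer than $j$ balls for every $j\geq 1$, so at that same generation $n$ there is already a box violating the threshold $j$. Hence the first generation at which some box contains strictly fewer than $j$ balls is at most $n$, which gives $G_{m,j}\leq G_m$ pathwise (on every $\omega$), and in particular almost surely.

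From this monotonicity I would divide through by $\ln(m)$ to get $\frac{G_{m,j}}{\ln(m)}\leq \frac{G_m}{\ln(m)}$ a.s., and then apply the chain of inequalities
\[
\liminf_{m\to\infty}\frac{G_{m,j}}{\ln(m)}\ \leq\ \limsup_{m\to\infty}\frac{G_{m,j}}{\ln(m)}\ \leq\ \limsup_{m\to\infty}\frac{G_{m}}{\ln(m)}\ \leq\ C_*\qquad a.s.,
\]
where the last step is precisely Proposition \ref{Lower Bound saturation}. This yields the claim.

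There is no real obstacle here: the statement is an immediate corollary of Proposition \ref{Lower Bound saturation} once the monotonicity in $j$ is observed. The only conceptual point worth highlighting is that the proposition is stated as a one-sided bound (a $\liminf$ upper bound), and indeed this is the strongest one can hope to extract from $G_{m,j}\leq G_m$ alone, since the matching lower bound on $G_{m,j}$ requires a separate coupon-collector style argument (completing $j$ copies of the collection), which has already been sketched in the preceding paragraph via $T_n^j\leq T_n^{(1)}+\cdots+T_n^{(j)}$ and would combine with Proposition \ref{Upper Bound saturation} to give the full $\lim$.
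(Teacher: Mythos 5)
Your argument is correct and is exactly the paper's own: the monotonicity $G_{m,j}\leq G_m$ (an empty box is a fortiori a box with fewer than $j$ balls) plus the $\limsup$ bound of Proposition~\ref{Lower Bound saturation}. The only cosmetic difference is that you spell out the chain $\liminf\leq\limsup$ explicitly and note that $j=1$ is trivially included, whereas the paper leaves these steps implicit.
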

\section{Occupancy scheme in random environment}
\subsection{Random probability cascades}
  Consider a random transition matrix $ A=(p_{ij}) $. To the box with label $ u $, of type $ i $, we will associate an independent copy of $A_u=(p_{ij}(u))  $ of $A $. The length of the box $ u = (i_1,...,i_{n}) $ is then given by some multiplicative cascade, that is: \begin{equation}
\label{multiplicative cascade}  l^{(n)}_u=  p_{i_1i_2}(i_1) \times ... \times p_{i_{n-1}i_{n}}((i_1,i_2,...i_{n-1})). \end{equation}
Let  $ \left(l^{(n)}_{ij,k}\right)_k $ denote the sequence  of length of the boxes of type $ j $ at generation $ n $, when the first box was of type $ i $. The process $ (Z_i^{n})_n=(Z_{i1}^n,...,Z_{iK}^n) $, with 
\[ Z_{ij}^n= \sum_k \delta_{-\ln(l_{ij,k}^{(n)})},\]
is then  a multitype branching random walk. Let $ \vert Z_{ij} \vert := \int_{\mathbb{R}} Z_{ij}(dx) $ denote the total mass. We will assume that the embedded Galton-Watson process $ ((\vert Z_{i1}^n \vert,...,\vert Z_{iK}^n\vert )$ is positive regular, that is
\begin{equation}
\label{positive regular}
\textnormal{the matrix $(\mathbb{P}( \vert Z_{ij} \vert >0) )_{ij} $ is positive regular.}
\end{equation} Let $ \theta \in \mathbb{R} $ and let us introduce the Laplace transform of the intensity measure of $ Z_{ij} $:
\[m_{ij}(\theta) =  \mathbb{E} \left[ \int e^{-\theta x} Z_{ij}(dx) \right]=\mathbb{E} \left[ p_{ij}^{\theta} \right]. \]
Let \[ L= \bigcap_{i,j} \lbrace \theta \in \mathbb{R} : m_{ij}(\theta)< \infty \rbrace \]
and note that $ L $ is an interval since $ m_{ij} $ is decreasing in $ \theta $. Let us introduce $ M(\theta)$, the matrix with entries $\mathbb{E} \left[ p_{ij}^{\theta} \right]  $, where we agree that if $ p_{ij}=0 $, then $ p_{ij}^{\theta}=0 $ even if $ \theta \leq 0 $. Note that the entries of $ M^{n}(\theta) $ are given by
 \[m_{ij}^n(\theta)=\mathbb{E} \left[ \int e^{-\theta x} Z^n_{ij}(dx) \right] =\mathbb{E}\left[ \sum_k (l_{ij,k}^{(n)})^{\theta} \right]. \] 
 Condition (\ref{positive regular}) implies that  $ M(\theta) $ is positive regular for each $ \theta \in L
 $, and thus the Perron-Frobenius theorem applies and $ M(\theta) $ possesses a maximum modulus eigenvalue $ \varrho(\theta) $, a strictly positive left-eigenvector $ w(\theta) $ and a strictly positive right-eigenvector $ v(\theta) $ with eigenvalue $ \varrho(\theta) $, such that we have $ (w(\theta))^tv(\theta)=1 $. Moreover $ \ln(\varrho(\theta)) $ is convex (see  \cite{kingman}) and analytic on $ L $ (see  \cite{biggins3}).  We will need to assume some even stronger condition on $ \varrho(\theta) $. We will assume that:
\begin{equation}
\label{strictly convex}
\textnormal{$ \ln(\varrho(\theta)) $ is strictly convex.}
\end{equation}
Similar to the function $ \psi(\theta) $ in the last section, the function
\[f(\theta):=  \ln(\varrho(\theta)) -\theta\frac{\varrho^{\prime}(\theta)}{\varrho(\theta)},\]
will play a crucial role in our analysis. Note that $ f^{\prime}(\theta)=-\theta \cdot \ln(\varrho(\theta))^{\prime\prime}$. Thus $ f $ is strictly decreasing on the interval $ (0,\infty)\cap L $ and strictly increasing on $ (-\infty,0)\cap L $. Since $ f(0)>0 $ the function $f$ is thus positive on some open interval. Let us define 
\[\theta_*=\inf\lbrace \theta \in L: f(\theta)>0 \rbrace \quad \text{and} \quad \theta^*=\sup\lbrace \theta \in L: f(\theta)>0 \rbrace. \]
The function $ f$ is then strictly positive on $ (\theta_*,\theta^*) $.

Let $( \mathcal{F}_n)$ denote the natural filtration of the  the multitype branching random walk. Following  \cite{biggins} one can define a remarkable martingale for each $ \theta \in L $, with respect to the filtration $ \mathcal{F}_n $:
 \[ W_i^n(\theta) :=\sum_{j=1}^K \frac{v_j(\theta)}{v_i(\theta)} \varrho(\theta)^{-n} \cdot \sum_k (l_{ij,k}^{(n)})^{\theta} ,\]
 where $ v_i(\theta) $ denotes the $ i $-th entry of the right-eigenvector $ v(\theta) $ with eigenvalue $ \varrho(\theta) $.
 We then have:
\begin{lemma} 
\label{Martingale convergence}
For each $ \theta \in (\theta_*,\theta^*) $ the martingale $ W_i^n(\theta) $ is bounded in $ L^{\alpha}(\mathbb{P}) $ for some $ \alpha > 1 $. It converges almost surely and in mean and its terminal value \[ W_i(\theta):= \lim_{n \rightarrow \infty} W^n_i(\theta) \] is a.s. strictly positive.

\end{lemma}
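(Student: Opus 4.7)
The plan is to apply the multitype $L^\alpha$-convergence theorem of \cite{biggins}. The first step is to recognise $(\theta_*, \theta^*)$ as the ``Biggins supercriticality'' range: for every $\theta$ with $f(\theta) > 0$, there exists $\alpha > 1$ such that $\varrho(\alpha \theta) < \varrho(\theta)^{\alpha}$. I would verify this by differentiating $\alpha \mapsto \ln \varrho(\alpha \theta) - \alpha \ln \varrho(\theta)$ at $\alpha = 1$: the derivative equals $\theta \varrho^{\prime}(\theta)/\varrho(\theta) - \ln \varrho(\theta) = -f(\theta) < 0$, so the function is strictly decreasing near $1$, and the strict convexity assumption (\ref{strictly convex}) allows one to pick $\alpha \in (1,2]$ sufficiently close to $1$ so that the strict inequality persists and $\alpha \theta$ still lies in $L$.

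Next I would check the side hypotheses of Biggins' multitype theorem. The offspring count of each box is deterministically bounded by $K$, and the moments $m_{ij}(\alpha \theta)$ are finite by construction, so the $X \log X$-style integrability conditions are automatic. The theorem then yields $\sup_n \mathbb{E}\bigl[(W_i^n(\theta))^{\alpha}\bigr] < \infty$. Almost sure convergence follows from the martingale convergence theorem, and convergence in mean from uniform integrability supplied by $L^\alpha$-boundedness with $\alpha > 1$. In particular $\mathbb{E}[W_i(\theta)] = W_i^0(\theta) = 1$.

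For strict positivity, I would decompose on the first generation to obtain the recursion
\[ W_i(\theta) \;=\; \varrho(\theta)^{-1} \sum_{j=1}^{K} \frac{v_j(\theta)}{v_i(\theta)} \sum_k \bigl( l_{ij,k}^{(1)} \bigr)^{\theta}\, \widetilde W_{j}^{(j,k)}(\theta), \]
where the $\widetilde W_j^{(j,k)}(\theta)$ are i.i.d.\ copies of $W_j(\theta)$ attached to the grandchildren of the root. Setting $q_i := \mathbb{P}(W_i(\theta) = 0)$ and conditioning on the first generation, the vector $(q_i)$ is shown to be a fixed point of the multivariate probability generating functional of the embedded multitype Galton--Watson process. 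Positive regularity (\ref{positive regular}) forces this functional to have at most two fixed points in $[0,1]^K$, the extinction vector and $\mathbf{1}$; since $\mathbb{E}[W_i(\theta)] = 1$ excludes $q_i = 1$, and the $L^\alpha$-boundedness ensured by $f(\theta)>0$ prevents the limit from concentrating on extinction, one concludes $q_i = 0$.

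The main obstacle is this last step: rigorously identifying $\{W_i(\theta) = 0\}$ with the extinction event of the embedded branching tree and ruling out the non-trivial fixed point. This is where the strict convexity hypothesis (\ref{strictly convex}) enters crucially, ensuring that the Biggins martingale is non-degenerate and that the only fixed point compatible with $\mathbb{E}[W_i(\theta)] = 1$ is the trivial one. By contrast, the first two steps reduce to moment bookkeeping and a direct citation of \cite{biggins}.
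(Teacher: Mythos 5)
Your proposal follows essentially the same route as the paper: verify the hypotheses of Biggins' Theorem~2 (the spectral inequality $\varrho(\alpha\theta) < \varrho(\theta)^{\alpha}$ for some $\alpha>1$, plus a first-generation moment bound), invoke it for $L^{\alpha}$-boundedness and hence almost sure and $L^1$ convergence, and then obtain strict positivity by identifying $q_i:=\mathbb{P}(W_i(\theta)=0)$ as a fixed point of the generating functional of the embedded multitype Galton--Watson process. Your calculus for the spectral inequality (differentiating $\alpha\mapsto \ln\varrho(\alpha\theta)-\alpha\ln\varrho(\theta)$ at $\alpha=1$ and getting $-f(\theta)<0$) is just a variant of the paper's argument that $g(\theta)=\theta^{-1}\ln\varrho(\theta)$ has derivative $-\theta^{-2}f(\theta)<0$ on $(\theta_*,\theta^*)$; both boil down to $f(\theta)>0$.

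One correction to your closing remarks: strict convexity (\ref{strictly convex}) plays no special role in the positivity step, and ``$L^{\alpha}$-boundedness prevents concentration on extinction'' is not the right mechanism. The reason the extinction vector equals $\mathbf{0}$ is structural: $A$ is a transition matrix, so a.s.\ each row sums to one and every box has at least one child of positive size, whence the embedded Galton--Watson process survives with probability one. The role of $L^{\alpha}$-boundedness (granted by $f(\theta)>0$) is simply to give uniform integrability and hence $\mathbb{E}[W_i(\theta)]=1$, which forces $q_i<1$; positive regularity then pins $(q_i)$ to the extinction vector, which is $\mathbf{0}$ by the preceding observation.
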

\begin{proof}
We want to apply Theorem 2 in  \cite{biggins}. We need to check that for all $ \theta \in (\theta_*,\theta^*) $ there is an $ \alpha > 1 $ such that we have $ \varrho(\alpha \theta) < \varrho(\theta)^\alpha $ and $ \max_i \mathbb{E}[(W_i^n(\theta))^{\alpha}] < \infty $. Consider the function $ g(\theta)=\frac{1}{\theta}\ln(\varrho(\theta)) $ and note that $ g^{\prime}(\theta) = -\frac{1}{\theta^2} f(\theta)$. Thus $g $ is decreasing on $(\theta_*,\theta^*)  $. For $ \alpha >1$ small enough we thus have
\[\frac{\ln(\varrho(\alpha \theta))}{\alpha \theta}<\frac{\ln(\varrho(\theta))}{\theta}\]
and thus $ \varrho(\alpha \theta)<\varrho(\theta)^{\alpha} $. For the second criterion note that
\[\mathbb{E}[(W_i^n(\theta))^{\alpha}] \leq \sum_{j=1}^K \left(\frac{v_j(\theta)}{v_i(\theta)} \varrho(\theta)^{-1}\right)^{\alpha}\mathbb{E}\left[p_{ij}^{\alpha \theta} \right] < \infty \quad \forall i,j,\]
by Jensen's inequality and the fact that $ \alpha \theta \in (\theta_*,\theta^*) $ for $ \alpha>1 $ small enough. By Theorem 2 in \cite{biggins} we thus get the convergence of $ W_i^n $ almost surely and in mean. For the a.s. strictly positivity note that 
\[\mathbb{E}[(W_i^n(\theta))^{\alpha}]=\sum_{j=1}^K \frac{v_j(\theta)}{v_i(\theta)} \varrho(\theta)^{-n} m_{ij}^n(\theta). \]
By the Perron-Frobenius theorem $\lim_{n \rightarrow \infty}  \varrho(\theta)^{-n} m_{ij}^n(\theta)= v_i(\theta)w_j(\theta)  $. Thus $ \lim_{n \rightarrow \infty} \mathbb{E}[W_i^n(\theta)] = \mathbb{E}[W_i(\theta)]=1 $ and $ \mathbb{P}(W_i(\theta)=0)=\beta_i<1 $. Moreover $ (\beta_1,...,\beta_K) $ is a fixed point of the multivariate generating function of the embedded Galton-Watson process and thus $ \beta_i $ is the extinction probability of the process started from type $ i $ and thus $ \beta_i=0 $ a.s.
\end{proof}

The following result (Corollary 2 in  \cite{biggins}),  will play the role of Corollary \ref{numerous} in the previous section. It  will help us to control the asymptotic behavior of the length of boxes at generation $ n $, as $ n $ tends to infinity. 
\begin{lemma}
\label{box}
For all $ a> b \in \mathbb{R} $ and $ \theta $ in a compact subset of $ (\theta_*,\theta^*) $ we have:
\[ \sqrt{n}e^{-n f(\theta)} \# \left\lbrace k: l_{ij,k}^{(n)} \in \left[e^{n\frac{\varrho^{\prime}(\theta)}{\varrho(\theta)}-a}, e^{n\frac{\varrho^{\prime}(\theta)}{\varrho(\theta)}-b} \right] \right\rbrace \rightarrow \frac{v_i(\theta) w_j(\theta) W_i(\theta)}{\sqrt{2 \pi f^{\prime \prime}(\theta)}} \frac{e^{a \theta}-e^{b\theta}}{\theta}\]
almost surely as $ n $ tends to infinity.
\end{lemma}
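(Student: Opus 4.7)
The statement is Corollary 2 of Biggins applied to the multitype branching random walk $(Z_{ij}^n)$, so the plan is to recover it via the standard route: a tilted change of measure, a local CLT for the spine walk, and Biggins' almost-sure martingale convergence.

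First I would set up the many-to-one reduction. Fix $\theta \in (\theta_*,\theta^*)$ and let $I_n := [-n\varrho'(\theta)/\varrho(\theta)+b,\, -n\varrho'(\theta)/\varrho(\theta)+a]$, so that the event $l_{ij,k}^{(n)} \in [e^{n\varrho'(\theta)/\varrho(\theta)-a},\, e^{n\varrho'(\theta)/\varrho(\theta)-b}]$ is simply $-\ln l_{ij,k}^{(n)} \in I_n$. The many-to-one formula for the multitype BRW expresses the first moment of the count as
\[
m_{ij}^n(\theta) \cdot \mathbb{E}_\theta\!\left[e^{\theta X_n}\mathbf{1}_{X_n \in I_n}\right],
\]
where under the $\theta$-tilted law $\mathbb{P}_\theta$ the spine position $X_n$ is a Markov-additive random walk with asymptotic mean $-n\varrho'(\theta)/\varrho(\theta)$ and, thanks to assumption (\ref{strictly convex}), non-degenerate Gaussian fluctuations of order $\sqrt{n}$.

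The second step would be a local CLT for $X_n$ centered at its mean, giving a density amplitude of $(2\pi n f''(\theta))^{-1/2}$ in an $O(1)$ window (here $f''(\theta)$ is proportional to the per-step variance under the tilt, via the Legendre relation). Integrating $e^{\theta y}$ against this density over $[b,a]$ yields the factor $(e^{a\theta}-e^{b\theta})/\theta$; combining with the Perron--Frobenius asymptotic $m_{ij}^n(\theta) \sim v_i(\theta) w_j(\theta) \varrho(\theta)^n$ and the identity $\varrho(\theta)^n e^{-n\theta \varrho'(\theta)/\varrho(\theta)} = e^{nf(\theta)}$ reproduces exactly the right-hand side of the lemma at the level of expectations, with the martingale limit $W_i(\theta)$ replaced by its expectation $1$.

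The final step is to upgrade from convergence in mean to almost-sure convergence, and this is where the main obstacle lies. The plan is Biggins' branching decomposition: write the generation-$n$ count as a sum of independent contributions from particles at some intermediate generation $k$, apply the first-moment asymptotics to each subtree, and observe that after rescaling by $\sqrt{n}\,e^{-nf(\theta)}$ the sum collapses to the target constant times the additive martingale $W_i^k(\theta)$. Letting $k \to \infty$ and invoking the a.s.\ convergence $W_i^k(\theta) \to W_i(\theta)$ from Lemma \ref{Martingale convergence} then finishes the argument, provided one can control the remainder uniformly in $k$ and $n$. That uniform control is the delicate point and will rely on the $L^\alpha$-boundedness ($\alpha>1$) of the additive martingale, which is available throughout $(\theta_*,\theta^*)$ by the previous lemma and yields a truncated second-moment estimate vanishing as $k\to\infty$.
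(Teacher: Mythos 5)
The paper offers no proof of this statement at all: it is quoted verbatim as Corollary~2 of \cite{biggins} and invoked as a black box, so there is no in-paper argument against which to compare your attempt. Your sketch is a coherent outline of one natural route to such a local limit theorem for multitype branching random walks --- a many-to-one/spine change of measure, a Markov-additive local CLT, and an upgrade to almost sure convergence via a branching decomposition at an intermediate generation $k$ together with the $L^{\alpha}$-boundedness of $W_i^n(\theta)$ from Lemma~\ref{Martingale convergence}. This is not quite how Biggins obtains it (he proves uniform convergence of the analytic family $\theta \mapsto W_i^n(\theta)$ on complex neighbourhoods and then reads off pointwise counts by Fourier inversion), but the two routes are close relatives, and you correctly identify the almost-sure upgrade via the intermediate decomposition as the point requiring genuine work.

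There is, however, one computational claim in the middle of your sketch that does not hold and would steer you to a different constant than the one stated. You assert that $f''(\theta)$ ``is proportional to the per-step variance under the tilt, via the Legendre relation.'' Writing $g = \ln\varrho$, the per-step variance of the tilted spine walk is $g''(\theta)$. But $f(\theta)=g(\theta)-\theta g'(\theta)$ gives $f'(\theta)=-\theta g''(\theta)$ and hence $f''(\theta)=-g''(\theta)-\theta g'''(\theta)$, which involves a third derivative of $g$ and is not a multiple of $g''(\theta)$ in general. If you carry your many-to-one/LCLT computation through cleanly --- $\mathbb{E}[Z^n_{ij}(I_n)] \approx v_i(\theta)w_j(\theta)\,\varrho(\theta)^n \int_{I_n} e^{\theta y}\,\mathbb{P}_\theta(X_n\in dy)$ with $X_n$ asymptotically Gaussian of mean $-n g'(\theta)$ and variance $n g''(\theta)$ --- you land on the normalising factor $\sqrt{2\pi\,(\ln\varrho)''(\theta)}$ in the denominator rather than $\sqrt{2\pi\,f''(\theta)}$. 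Before proceeding you should reconcile this discrepancy with the statement of the lemma (which may simply be how \cite{biggins} parametrizes the constant, or a transcription issue); as written, the justification you give for the appearance of $f''(\theta)$ is incorrect.
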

As in the last section, we will further sometimes need to control the size of the smallest and the largest box at generation $ n $, when $ n $ tends to infinity. Note that  $ \varrho(\theta) $ is decreasing in $ L $ since the entries $ m_{ij}(\theta) $ are decreasing and that the strict convexity of $ \varrho(\theta) $ thus entails that $ \varrho^\prime(\theta) < 0 $ on $ L $. Define the constants 
\[ \zeta^* :=   \lim_{\begin{subarray}{c}
\theta \rightarrow \theta^*\\
\theta \leq \theta^*
\end{subarray}} \frac{ \varrho(\theta)}{- \varrho^{\prime}(\theta)}, \quad \text{and} \quad \zeta_*:=\lim_{\substack{\theta \rightarrow \theta_* \\ \theta > \theta_* }}  \frac{ \varrho(\theta)}{- \varrho^{\prime}(\theta)}. \]
The strict convexity of $ \ln(\varrho(\theta)) $ implies that $ \frac{ \varrho(\theta)}{- \varrho^{\prime}(\theta)} $ is strictly increasing on $ L $. Recall moreover that $ \rho(\theta) $ is analytic on $ L $. For $ \theta^* < \infty $ we thus have $ 0 < \zeta^* < \infty $ since $ 0 \in L $ and  $ \frac{ \varrho(0)}{- \varrho^{\prime}(0)} >0 $.  Moreover if 
\begin{eqnarray}  \label{theta stern} -\infty < \theta_* < 0 \quad \textnormal{and}  \quad \lim_{\begin{subarray}{c}
\theta \rightarrow \theta_*\\
\theta > \theta_*
\end{subarray}} f(\theta)=0,   \end{eqnarray} then we have   $ 0 < \zeta_* < \infty $ since then $\lim_{\begin{subarray}{c}
\theta \rightarrow \theta_*\\
\theta > \theta_*
\end{subarray}} \frac{ \varrho(\theta)}{- \varrho^{\prime}(\theta)}=\lim_{\begin{subarray}{c}
\theta \rightarrow \theta_*\\
\theta > \theta_*
\end{subarray}} \frac{\ln(\varrho(\theta))}{\theta} $.

\begin{cor}
\label{largest box}
 Suppose that $ \theta^* < \infty $ and let $ \overline{l}_{i}^{(n)} $ denote the size of the  largest box  at generation $ n $, when the first box was of type $ i $. 
We then have:
\[\lim_{n \rightarrow \infty} n^{-1} \cdot \ln \left(\overline{l}_{i}^{(n)} \right)= - \frac{1}{\zeta^*} \quad a.s.\]

\end{cor}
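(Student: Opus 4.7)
The plan mirrors the argument of the deterministic analogue at the end of the first section, with the Perron--Frobenius asymptotics of Lemma \ref{asymptotics} replaced by the martingale bound of Lemma \ref{Martingale convergence} and Corollary \ref{numerous} replaced by Lemma \ref{box}. I will first fix $\theta$ strictly inside $(\theta_*,\theta^*)$, show that $n^{-1}\ln \overline{l}_i^{(n)}$ is almost surely squeezed between $\varrho'(\theta)/\varrho(\theta)$ and $\theta^{-1}\ln \varrho(\theta)$, and then let $\theta\uparrow \theta^*$, observing that both bounds converge to the common value $-1/\zeta^*$ thanks to the definitions of $\theta^*$ and $\zeta^*$.

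\emph{Upper bound.} Fix $\theta\in(0,\theta^*)$ and set $c_i(\theta):=v_i(\theta)/\min_j v_j(\theta)$, which is positive and finite. Since $x\mapsto x^\theta$ is increasing on $(0,\infty)$ and every summand is nonnegative, one has the crude domination
\[
(\overline{l}_i^{(n)})^\theta \le \sum_{j=1}^K\sum_k \bigl(l_{ij,k}^{(n)}\bigr)^\theta \le c_i(\theta)\,\varrho(\theta)^n\, W_i^n(\theta).
\]
By Lemma \ref{Martingale convergence}, $W_i^n(\theta)$ converges almost surely to a finite random limit, so taking logarithms, dividing by $n\theta$ and letting $n\to\infty$ yields $\limsup_n n^{-1}\ln \overline{l}_i^{(n)}\le \theta^{-1}\ln\varrho(\theta)$ almost surely. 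Now $f(\theta)\to 0$ as $\theta\uparrow\theta^*$ (by the definition of $\theta^*$ together with continuity of $f$ on the interior of $L$), so $\theta^{-1}\ln\varrho(\theta)\to \varrho'(\theta)/\varrho(\theta)\to -1/\zeta^*$.

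\emph{Lower bound.} Fix again $\theta\in(0,\theta^*)$ and pick any $0<b<a$. By Lemma \ref{box}, since $f(\theta)>0$ on $(\theta_*,\theta^*)$, the number of boxes of type $j$ at generation $n$ whose size lies in $[\exp(n\varrho'(\theta)/\varrho(\theta)-a),\exp(n\varrho'(\theta)/\varrho(\theta)-b)]$ grows like $\sqrt{n}^{-1}e^{nf(\theta)}$ times a strictly positive almost sure constant, and in particular tends almost surely to $+\infty$. Hence for almost every $\omega$ at least one such box exists for all large $n$, so
\[
\liminf_{n\to\infty}\frac{\ln \overline{l}_i^{(n)}}{n}\ge \frac{\varrho'(\theta)}{\varrho(\theta)}\quad\text{a.s.}
\]
Letting $\theta\uparrow\theta^*$ and using $\varrho'(\theta)/\varrho(\theta)\to -1/\zeta^*$ yields the matching lower bound.

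\emph{Main obstacle.} The delicate step is the passage to the right endpoint $\theta^*$: both Lemma \ref{Martingale convergence} and Lemma \ref{box} are valid only on compact subsets of the open interval $(\theta_*,\theta^*)$, so the two almost sure bounds above hold for each fixed $\theta$ on a $\theta$-dependent full-measure event. To conclude one therefore has to work along a countable sequence $\theta_m\uparrow\theta^*$ and intersect the corresponding null sets. The convergence of both bounds to the common value $-1/\zeta^*$ then relies on analyticity and strict convexity of $\ln\varrho$ inside $L$, on the vanishing of $f$ at the boundary $\theta^*$, and on the hypothesis $\theta^*<\infty$ which, as noted just above the corollary, ensures $0<\zeta^*<\infty$.
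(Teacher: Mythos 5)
Your proof is correct and follows essentially the same route as the paper: upper bound from the a.s. convergence of the additive martingale $W_i^n(\theta)$ (Lemma \ref{Martingale convergence}), lower bound from the a.s. count of boxes in a window around $e^{n\varrho'(\theta)/\varrho(\theta)}$ (Lemma \ref{box}), then let $\theta\uparrow\theta^*$. You are slightly more explicit than the paper in two places — you spell out the constant $c_i(\theta)=v_i(\theta)/\min_j v_j(\theta)$ in the upper bound, and you phrase the lower bound correctly as $\overline{l}_i^{(n)}\ge e^{n\varrho'(\theta)/\varrho(\theta)-a}$ eventually (the paper's unqualified ``$e^{n\varrho'(\theta)/\varrho(\theta)}\le \overline{l}_i^{(n)}$'' is a shorthand for this) — and you flag the countable-sequence/null-set bookkeeping and the implicit need for $f(\theta)\to 0$ at $\theta^*$, both of which the paper leaves tacit.
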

\begin{proof}
We first show the upper bound. Let $ \theta \in (0,\theta^*) $. By Lemma \ref{Martingale convergence} we have for $ n $ large enough:
\[  \varrho(\theta)^{-n} \left(\overline{l}_{i}^{(n)}\right)^{\theta}  \leq \sum_{j=1}^K \frac{v_j(\theta)}{v_i(\theta)} \varrho(\theta)^{-n} \cdot  \left(\overline{l}_{i}^{(n)}\right)^{\theta} \leq W_i^{n}(\theta)  \leq 2W_i(\theta), \quad a.s. \]
By some rearrangement we thus get:
\[\limsup_{n \rightarrow \infty} n^{-1} \cdot \ln(\overline{l}_{i}^{(n)}) \leq \frac{\ln(\varrho(\theta))}{\theta} \quad a.s. \]
We conclude by letting $ \theta $ tend to $ \theta^* $. For the lower bound note that by Lemma \ref{box} we have  $ e^{n\frac{\varrho^{\prime}(\theta)}{\varrho(\theta)}} \leq \overline{l}_i^{(n)} $  a.s.,  and we conclude by some rearrangement and letting $ \theta  $ tend to $ \theta^* $.
\end{proof}

For the size of the smallest box one shows in the same way that:
\begin{cor}
\label{smallest box}
Let (\ref{theta stern}) hold and let $ \underline{l}_i^{(n)} $ denote the smallest box at generation $ n $, when the type of the first box was of type $ i $. We then  have
\[ \lim_{n \rightarrow \infty} n^{-1} \cdot \ln(\underline{l}_i^{(n)} ) = - \frac{1}{\zeta_*}, \quad a.s. \]
\end{cor}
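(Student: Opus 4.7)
The plan is to mirror the proof of Corollary \ref{largest box}, exploiting the duality between the roles of $\theta > 0$ (which controls the largest box) and $\theta < 0$ (which controls the smallest box). Condition (\ref{theta stern}) guarantees $\theta_* \in (-\infty, 0)$ and $f(\theta_*)=0$, so that the martingale of Lemma \ref{Martingale convergence} remains available on an interval of negative parameters $(\theta_*, 0)$ and $\zeta_*$ is finite and positive. Throughout I will fix an arbitrary $\theta \in (\theta_*, 0)$, establish matching inequalities for $\liminf$ and $\limsup$ of $n^{-1} \ln \underline{l}_i^{(n)}$, and then let $\theta \downarrow \theta_*$.

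For the lower bound I would use the martingale $W_i^n(\theta)$. Since every summand of $W_i^n(\theta)$ is positive, each individual term $\varrho(\theta)^{-n} (\underline{l}_i^{(n)})^\theta$ is controlled by a constant multiple of $W_i^n(\theta)$, and Lemma \ref{Martingale convergence} gives $W_i^n(\theta) \le 2 W_i(\theta) < \infty$ almost surely for $n$ large. Because $\theta < 0$, raising to the $1/\theta$ power reverses inequalities and yields
\[
\underline{l}_i^{(n)} \ge C(\theta, \omega) \, \varrho(\theta)^{n/\theta},
\]
whence $\liminf_{n \to \infty} n^{-1} \ln \underline{l}_i^{(n)} \ge \ln(\varrho(\theta))/\theta$ almost surely.

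For the upper bound I would invoke Lemma \ref{box}. Since $f(\theta) > 0$ on $(\theta_*, \theta^*)$, the lemma tells us that for any window $[e^{n\varrho'(\theta)/\varrho(\theta) - a}, e^{n\varrho'(\theta)/\varrho(\theta) - b}]$ with $a > b$, the number of boxes of any fixed type $j$ with size in this window grows like $e^{n f(\theta)}/\sqrt{n}$, hence is strictly positive for all $n$ large enough almost surely. In particular the smallest box satisfies $\underline{l}_i^{(n)} \le e^{n \varrho'(\theta)/\varrho(\theta) - b}$ a.s. eventually, so $\limsup_{n \to \infty} n^{-1} \ln \underline{l}_i^{(n)} \le \varrho'(\theta)/\varrho(\theta)$.

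To finish, I would let $\theta \downarrow \theta_*$ in both bounds. The upper bound gives $\limsup \le \lim_{\theta \downarrow \theta_*} \varrho'(\theta)/\varrho(\theta) = -1/\zeta_*$ by the very definition of $\zeta_*$ and continuity of $\varrho, \varrho'$ on $L$. For the lower bound I need $\lim_{\theta \downarrow \theta_*} \ln(\varrho(\theta))/\theta = -1/\zeta_*$ as well; this is where the assumption $\lim_{\theta \downarrow \theta_*} f(\theta) = 0$ in (\ref{theta stern}) is essential, since it forces $\ln(\varrho(\theta_*)) = \theta_* \varrho'(\theta_*)/\varrho(\theta_*)$, so that $\ln(\varrho(\theta))/\theta$ and $\varrho'(\theta)/\varrho(\theta)$ share the same boundary limit. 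The only slightly delicate point is ensuring $W_i(\theta) > 0$ and hence that the random constant in the lower bound stays finite and positive along the chosen sequence $\theta \downarrow \theta_*$, but this is provided uniformly by Lemma \ref{Martingale convergence} applied at each rational $\theta$ in $(\theta_*,0)$.
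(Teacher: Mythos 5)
Your proof is correct and is exactly what the paper means by ``one shows in the same way'': you dualize the argument of Corollary~\ref{largest box}, replacing $\theta\in(0,\theta^*)$ by $\theta\in(\theta_*,0)$, using the martingale bound $W_i^n(\theta)\le 2W_i(\theta)$ (with the inequality reversed after raising to the $1/\theta$ power since $\theta<0$) for the lower bound, Lemma~\ref{box} for the upper bound, and condition~(\ref{theta stern}) to identify $\lim_{\theta\downarrow\theta_*}\ln\varrho(\theta)/\theta$ with $\lim_{\theta\downarrow\theta_*}\varrho'(\theta)/\varrho(\theta)=-1/\zeta_*$.
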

\subsection{Study of the height}
Recall that $ H_{m,j}$  denotes the first generation of boxes at which all the boxes contain strictly less than $ j $ balls when $ m $  balls have been thrown independently. We will see that there is a phase transition in the limiting behavior of $ H_{m,j}$ as $ m $ tends to infinity, depending on the values of $ j $. We aim to show that:
\begin{theorem} Suppose that conditions (\ref{positive regular}) and (\ref{strictly convex}) hold. We then have:
\begin{enumerate}  \item For every $ j \in (\theta_*,\theta^*) $ we have
\[
\lim_{m \rightarrow \infty} \frac{1}{\ln(m)}  H_{m,j} = \frac{j}{-\ln(\varrho(j))}  +\textit{O}\left( \frac{\ln \ln(m)}{\ln(m)} \right)  \qquad a.s.  \]

\item  If $ \theta^* <  \infty $ we have for every $ j \geq \theta^* $ that
\[\lim_{m \rightarrow \infty} \frac{1}{\ln(m)} H_{m,j} = \zeta^* \qquad a.s. \]

\end{enumerate}
\end{theorem}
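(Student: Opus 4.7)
The plan is to mimic the two-sided strategy of Section 2 in each of the two regimes, replacing the deterministic estimates based on Perron--Frobenius (Lemma \ref{asymptotics} and Corollary \ref{numerous}) by the almost-sure Biggins-type estimates collected in Section 3.1 -- namely Lemma \ref{Martingale convergence}, Lemma \ref{box}, and Corollary \ref{largest box}.

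\textbf{Assertion 1.} Fix $ j \in (\theta_*,\theta^*) $. For the upper bound set $ x_n := \varrho(j)^{-n/j}n^{-a} $ with $ a>1/j $, as in Lemma \ref{poissonboxes}. By Fubini and the Binomial estimate \eqref{Binomial},
\[\mathbb{E}\left[N^{(n)}_{\lfloor x_n\rfloor,j}\right] \le x_n^j \sum_{i,l} \mathbb{E}\left[\sum_k \left(l^{(n)}_{il,k}\right)^j \right] = x_n^j \sum_{i,l} m^n_{il}(j).\]
The positive regularity of $ M(j) $ (which holds since $ j \in L $) together with Perron--Frobenius makes the right-hand side $O(n^{-aj})$, hence summable; Borel--Cantelli and the final step of Proposition \ref{upper bound} yield the claimed upper bound. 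For the lower bound, fix $ \varepsilon'\in (0, f(j)/j) $ and set $ x_n := \varrho(j)^{-n/j}e^{n\varepsilon'} $. Lemma \ref{box} produces a.s., for any $ \varepsilon>0 $, at least $ v_n:=e^{n(f(j)-\varepsilon)} $ boxes at generation $ n $ whose size lies in a narrow window around $ e^{n\varrho'(j)/\varrho(j)} $. Poissonize the ball-throwing; using $ \varrho'(j)/\varrho(j)=(\ln\varrho(j)-f(j))/j $, the ball counts in these boxes are independent Poisson with rate $ \lambda_n\sim e^{n(\varepsilon'-f(j)/j)}\downarrow 0 $, so the probability that none carries $ \ge j $ balls is bounded by $ \exp(-c\, v_n \lambda_n^j)=\exp(-c\,e^{n(j\varepsilon'-\varepsilon)}) $, summable once $ \varepsilon<j\varepsilon' $. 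A Chebyshev-based de-Poissonization as in Proposition \ref{propediprop}, followed by letting $ \varepsilon'\downarrow 0 $, concludes.

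\textbf{Assertion 2.} Assume $ \theta^*<\infty $ and $ j\ge \theta^* $. For the lower bound define $ y_n:=\exp(n/\zeta^*+n\varepsilon') $. Corollary \ref{largest box} gives $ \overline{l}^{(n)}\ge e^{-n/\zeta^*-n\delta} $ eventually a.s., so the count in the largest box is Binomial with mean at least $ e^{n(\varepsilon'-\delta)}\to\infty $. A Chernoff bound makes the conditional probability that this count falls below $ j $ summable in $ n $, so conditional Borel--Cantelli together with the final step of Proposition \ref{propediprop} yields $ \liminf H_{m,j}/\ln(m)\ge \zeta^* $.

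\textbf{Main obstacle.} The delicate point is the matching upper bound for Assertion 2. With $ x_n:=\exp(n/\zeta^*-n\eta) $ the first-moment approach of Assertion 1 breaks down: strict convexity of $ \ln\varrho $ combined with the identity $ \ln\varrho(\theta^*)=-\theta^*/\zeta^* $ forces $ \varrho(j)\ge e^{-j/\zeta^*} $ whenever $ j\ge \theta^* $, so $ x_n^j \varrho(j)^n $ fails to decay. The remedy is to pass through an interior exponent $ \theta\in(\theta_*,\theta^*) $ approaching $ \theta^* $ and use the pointwise bound
\[\sum_k \left(l_k^{(n)}\right)^j \le \left(\overline{l}^{(n)}\right)^{j-\theta} \mathcal{L}^{(n)}(\theta),\]
which is valid because $ l_k^{(n)}\le \overline{l}^{(n)} $ and $ j>\theta $. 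Corollary \ref{largest box} controls $ \overline{l}^{(n)}\le e^{-n/\zeta^*+n\delta} $ and Lemma \ref{Martingale convergence} controls $ \mathcal{L}^{(n)}(\theta)\le C\varrho(\theta)^n $, both eventually a.s. Continuity of $ \ln\varrho $ at $ \theta^* $ gives $ \ln\varrho(\theta)+\theta/\zeta^*\to 0 $, hence upon letting $ \theta\uparrow\theta^* $ and $ \delta\downarrow 0 $ the conditional first moment of $ N^{(n)}_{\lfloor x_n\rfloor,j} $ is bounded by $ C\,e^{-jn\eta+o(n)} $, which is summable. Borel--Cantelli conditionally on the environment and the final step of Proposition \ref{upper bound} then deliver $ \limsup H_{m,j}/\ln(m)\le \zeta^* $.
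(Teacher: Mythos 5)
Your overall two-sided strategy and your treatment of the first assertion and the lower bound of the second match the paper in spirit and substance. The genuine departure is your upper bound for Assertion 2, which is correct but routes differently than the paper. You use the integer exponent $j$ in the binomial bound and then split each term as $l_k^j = l_k^{j-\theta}\,l_k^{\theta}\le (\overline{l}^{(n)})^{j-\theta}\,l_k^{\theta}$, so that you must invoke two a.s. environment estimates -- Corollary \ref{largest box} for $\overline{l}^{(n)}$ and Lemma \ref{Martingale convergence} for $\mathcal{L}^{(n)}(\theta)$ -- and then let $\theta\uparrow\theta^*$, $\delta\downarrow 0$ in the correct order (fix $\eta$, then $\theta$ close enough to $\theta^*$, then $\delta$ small) so that $f(\theta^*)=0$ makes the exponent negative. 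The paper avoids all of this by exploiting that the elementary binomial bound $\mathbb{P}(B(m,p)\ge j)\le (mp)^{\theta}$ is already valid for any real $0<\theta\le j$: it then reuses verbatim the Assertion-1 machinery with $x_n=\varrho(\theta)^{-n/\theta}n^{-a}$ and the single Biggins martingale estimate, giving $\limsup H_{m,j}/\ln m\le \theta/(-\ln\varrho(\theta))$ for every $\theta\in(\theta_*,\theta^*)$, then observes that this function is decreasing and tends to $\zeta^*$ as $\theta\uparrow\theta^*$ because $f(\theta^*)=0$. The paper's route is leaner (one a.s. estimate instead of two) and gives a uniform $O(\ln\ln m/\ln m)$ error for each fixed $\theta$; yours requires the extra input of the largest-box limit but arrives at the same place. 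A secondary difference: for the Assertion 1 lower bound you carry the exponential factor $e^{n\varepsilon'}$ from the deterministic section and take $\varepsilon'\downarrow 0$, whereas the paper replaces it by a polynomial factor $n^a$ with $a>1/(2j)$, which is possible exactly because Lemma \ref{box} is a sharp almost-sure local limit result (count $\asymp e^{nf(j)}/\sqrt{n}$ in a window of fixed width) rather than the logarithmic large-deviation estimate of Corollary \ref{numerous}; this is what justifies the $O(\ln\ln m/\ln m)$ rate asserted in the theorem, which your exponential-window version does not recover.
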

The phase transition in the limiting behavior of the height has first been observed by \cite{Joseph}.
 \begin{remark}

As in the previous sections we assume that we start from a box of type $ 1 $. We will thus drop the subscript $ 1 $ and shorthand write  $ l^{(n)}_{i,k} $ for $ l^{(n)}_{1i,k}   $ and $ \underline{l}^{(n)} $ for $ \underline{l}_1^{(n)} $ and $ W $  for $ W_1 $.
\end{remark}
We first show the upper bound.  For $ \theta \in L $ with $\theta >0  $ such that $ \lceil \theta \rceil \leq j$ and $ a>\frac{1}{\theta} $, let us define the sequence \[x_n=\varrho(\theta)^{-\frac{n}{\theta}} n^{-a}. \]
We then have:

\begin{lemma}

 For almost all $ \omega $, there exists a natural number $ n_0(\omega) $ s.t for all $ n \geq n_0(\omega) $, there exists no box at generation $ n $ containing   $ j $ or more balls when $ \lfloor x_n \rfloor $ balls have been thrown.
\end{lemma}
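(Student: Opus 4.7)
The plan is to mimic the argument of Lemma \ref{poissonboxes}, but replacing the integer exponent $j$ by the real exponent $\theta$, so that the Perron-Frobenius asymptotics of $M(\theta)^n$ established in the preceding subsection become applicable. Concretely, I would bound the expected number of boxes at generation $n$ containing $\geq j$ balls and show that this expectation is summable in $n$; a Markov/Borel-Cantelli argument then yields the almost sure statement.

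The first (and really only nontrivial) step is to establish the binomial tail bound
\[\mathbb{P}(B(m,p) \geq j) \leq (mp)^{\theta}\qquad \text{for every } \theta\in(0,j],\ m\in\mathbb{N},\ p\in[0,1].\]
When $mp\leq 1$, this follows from the factorial moment estimate $\mathbb{P}(B(m,p)\geq j)\leq \binom{m}{j}p^{j}\leq (mp)^{j}$ together with $(mp)^{j}\leq (mp)^{\theta}$ (valid since $mp\leq 1$ and $\theta\leq j$, so in particular $\lceil\theta\rceil\leq j$). When $mp\geq 1$, the right-hand side is $\geq 1$ and the bound is trivial.

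Writing $N^{(n)}_{\lfloor x_n\rfloor,j}$ for the number of boxes at generation $n$ containing at least $j$ balls, conditioning on the random environment and then taking expectation gives
\[\mathbb{E}\big[N^{(n)}_{\lfloor x_n\rfloor,j}\big]=\sum_{i\in\Sigma}\sum_k \mathbb{E}\big[\mathbb{P}\big(B(\lfloor x_n\rfloor,l_{i,k}^{(n)})\geq j\,\big|\,l_{i,k}^{(n)}\big)\big]\leq x_n^{\theta}\sum_{i\in\Sigma}\mathbb{E}\Big[\sum_k (l_{i,k}^{(n)})^{\theta}\Big] = x_n^{\theta}\sum_{i\in\Sigma} m_{1i}^{n}(\theta),\]
where $m_{1i}^{n}(\theta)$ is the $(1,i)$-entry of $M(\theta)^{n}$. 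Since $\theta\in L$ and (\ref{positive regular}) is in force, $M(\theta)$ is positive regular, and Perron-Frobenius yields $m_{1i}^{n}(\theta)\leq C\,\varrho(\theta)^{n}$ for all sufficiently large $n$ and some constant $C=C(\theta)$. Plugging in $x_n=\varrho(\theta)^{-n/\theta}n^{-a}$ gives $\mathbb{E}[N^{(n)}_{\lfloor x_n\rfloor,j}]\leq C'n^{-a\theta}$.

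Since $a\theta>1$, the series $\sum_n \mathbb{E}[N^{(n)}_{\lfloor x_n\rfloor,j}]$ converges, so Markov's inequality together with the Borel-Cantelli lemma yield $N^{(n)}_{\lfloor x_n\rfloor,j}=0$ eventually, almost surely. The only delicate point is the non-integer binomial bound above; once it is in hand, the rest is a direct adaptation of the deterministic argument, with $M(\theta)^{n}$ and its Perron-Frobenius asymptotics replacing $A(\theta)^{n}$ and Lemma \ref{asymptotics}.
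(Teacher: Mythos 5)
Your proof is correct and takes essentially the same approach as the paper's, with one cosmetic difference: you bound $\mathbb{E}\big[N^{(n)}_{\lfloor x_n\rfloor,j}\big]$ directly through the Perron--Frobenius asymptotics of $M(\theta)^n$, whereas the paper first conditions on $\mathcal{F}_n$ to recognize the martingale $W^{(n)}(\theta)$ and then uses $\mathbb{E}[W^{(n)}(\theta)]=1$, which encodes the same normalization. Your explicit justification of the fractional-exponent binomial tail bound $\mathbb{P}(B(m,p)\geq j)\leq (mp)^{\theta}$ for $\theta\in(0,j]$ (splitting on $mp\leq 1$ versus $mp>1$) is a step the paper leaves implicit, and it is worth spelling out.
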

\begin{proof}
Let $ N^{(n)}_{m,j} $ denote the number of boxes at generation $ n $ containing   $ j $ or more balls when $ m $ balls have been thrown. Conditionally on $ \mathcal{F}_n $, the number of balls in a box at generation $ n $ of size $ l $ when $ m $ balls have been thrown has distribution $ B(m,l) $.
 Similarly to the proof of Lemma \ref{poissonboxes}, we thus have:
\begin{eqnarray*}
\mathbb{E}[ N^{(n)}_{ \lfloor x_n \rfloor,j} \vert \mathcal{F}_n] &=& \sum_{i=1}^K \sum_k \mathbb{P} \left( B \left(\lfloor x_n \rfloor, l_{i,k}^{(n)} \right) \geq j \right) \\ & \leq & \sum_{i=1}^K \sum_k  \left(  \lfloor x_n \rfloor l_{i,k}^{(n)} \right)^{\theta}
\\  & \leq & c_1(\theta)x_n^{\theta} \varrho(\theta)^n  \sum_{i=1}^K \sum_k \varrho(\theta)^{-n}\frac{v_i(\theta)}{v_1(\theta)}(l_{i,k}^{(n)})^{\theta} \\ &=& c_1(\theta)n^{-a\theta}W^{(n)}(\theta), \end{eqnarray*} where $ c_1(\theta):= \left(\max_{1 \leq i \leq K} \left( \frac{v_1(\theta)}{v_i(\theta)} \right)\right) $. We thus derive that:
\[ \mathbb{E}[N_{x_n,j}^{(n)}]  \leq c_1(\theta) n^{-a\theta} \mathbb{E} [W^{(n)}(\theta)] = c_1(\theta) n^{-a\theta}.\] We finally arrive at
\begin{eqnarray*}
\mathbb{E} \left[ \sum 1_{ \lbrace N^{(n)}_{ \lfloor x_n \rfloor,j} \geq 1 \rbrace} \right] \leq \mathbb{E} \left[\sum  N^{(n)}_{ \lfloor x_n \rfloor,j} \right] \leq  \sum c_1(\theta)  n^{-a\theta} < \infty  ,
\end{eqnarray*}
 and we conclude by Borel-Cantelli.
\end{proof}

In the same way as in the proof of Proposition \ref{upper bound}, we derive that:
\begin{proposition}

For every integer $ \theta \in L $ we have
\begin{eqnarray*}  \frac{1}{\ln(m)}  H_{m,j} \leq \frac{\theta}{-\ln(\varrho(\theta))} +\textit{O}\left( \frac{\ln \ln(m)}{\ln(m)} \right)\qquad a.s. \end{eqnarray*}
as $ m $ tends to infinity.
\end{proposition}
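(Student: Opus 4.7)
The plan is to carry over the argument of Proposition~\ref{upper bound} almost verbatim, with the preceding lemma playing the role of Lemma~\ref{poissonboxes} and $\varrho(\theta)$ replacing $\rho(j)$. That lemma supplies, for $a > 1/\theta$ and $x_n := \varrho(\theta)^{-n/\theta} n^{-a}$, an almost-sure bound $H_{\lfloor x_n \rfloor, j}(\omega) \leq n$ valid for all $n$ past a random $n_0(\omega)$. Since $m \mapsto H_{m,j}$ is non-decreasing in $m$, sandwiching an arbitrary integer $m$ between two consecutive values $x_{n-1} \leq m < x_n$ will transfer this control to arbitrary $m$.

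First I would note that the estimate is only informative when $\varrho(\theta) < 1$, i.e.\ $-\ln\varrho(\theta) > 0$; this is the implicit regime of interest (it is automatic for integer $\theta$ in the subcritical range with $\theta \leq j$, since $\varrho(1)=1$ and $\ln\varrho$ is strictly convex). Under this condition the sequence $(x_n)$ is eventually strictly increasing and eventually dominates $\exp\bigl(n(-\ln\varrho(\theta))/(2\theta)\bigr)$, the polynomial factor $n^{-a}$ being dwarfed by the exponential prefactor. Hence, for $m$ large enough a unique $n$ satisfies $x_{n-1} \leq m < x_n$, and the lemma combined with monotonicity yields $H_{m,j} \leq H_{\lfloor x_n \rfloor, j} \leq n$ almost surely.

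It remains to convert this into the claimed rate. Taking logarithms in $x_{n-1} \leq m$ gives
\[ n \leq \frac{\theta}{-\ln\varrho(\theta)}\ln m \;+\; \frac{a\theta}{-\ln\varrho(\theta)}\ln(n-1) \;+\; 1, \]
while the exponential lower bound on $x_{n-1}$ together with $x_{n-1} \leq m$ yields $n - 1 \leq \frac{2\theta}{-\ln\varrho(\theta)}\ln m$, so $\ln(n-1) = O(\ln\ln m)$. Substituting back and dividing by $\ln m$ produces the desired $O(\ln\ln m / \ln m)$ error term.

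I do not foresee a genuine obstacle: the only probabilistic input is the preceding lemma, whose proof already uses $\mathbb{E}[W^n(\theta)] = 1$ coming from Lemma~\ref{Martingale convergence}, and the rest is a purely deterministic subsequence-comparison argument identical to the nonrandom case. The only bookkeeping point is to keep the implicit requirements $\theta \leq j$ (so $\lceil\theta\rceil \leq j$ in the preceding lemma) and $\varrho(\theta)<1$ (so the bound is nontrivial) in mind.
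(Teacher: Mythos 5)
Your proposal is correct and follows the same route the paper takes: the paper itself only says this proposition is obtained ``in the same way as in the proof of Proposition~\ref{upper bound}'', and your argument is precisely that transposition — the preceding lemma plays the role of Lemma~\ref{poissonboxes}, $\varrho(\theta)$ replaces $\rho(j)$, and the deterministic sandwiching $x_{n-1}\leq m<x_n$ with the log-comparison gives the $O(\ln\ln m/\ln m)$ rate. Your side remarks about the implicit constraints $\lceil\theta\rceil\leq j$ and $\varrho(\theta)<1$ (needed to make the statement nonvacuous) are accurate bookkeeping that the paper leaves tacit.
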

\begin{remark}
The function $ \theta \rightarrow   \frac{\theta}{-\ln(\varrho(\theta))}$ is decreasing on $ (\theta_*, \theta^*)$.
\end{remark}
For the lower bound let first $ j \in (\theta_*, \theta^*) $ and recall that $ H_{m,j} \geq n $, if at generation $ n $ there is at least one box containing  $ j $ or more balls.  Let  $  a> \frac{1}{2j} $ and define the sequence  \[ x_n= \varrho(\theta)^{-\frac{n}{j}}n^a. \] 
As in the previous section we then have:
\begin{lemma}

 For almost all $ \omega $ there exists a natural number $ n_0(\omega) $, such that for all $ n \geq n_0(\omega)$ there is at least one box containing $ j $ or more balls when $ \Poisson(x_n) $ balls have been thrown.
\end{lemma}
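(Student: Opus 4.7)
The plan is to adapt the argument of Lemma~\ref{tuttituu} to the random environment, with Lemma~\ref{box} playing the role of Corollary~\ref{numerous}. The extra $n^{-1/2}$ factor appearing in Lemma~\ref{box} explains why we now only need a polynomial correction $n^a$ with $a>1/(2j)$ rather than the exponential correction $e^{n\varepsilon'}$ used in the nonrandom case.

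First I would invoke Lemma~\ref{box} at $\theta=j$, which is legitimate since $j\in(\theta_*,\theta^*)$. Fixing constants $A>B>0$, summing over the terminal types, and using that $W_1(j)>0$ almost surely (Lemma~\ref{Martingale convergence}), this yields almost surely, for $n$ large enough, at least
\[ v_n := c(\omega)\, n^{-1/2} e^{n f(j)} \]
boxes at generation $n$ with size in the interval $\bigl[e^{n\varrho'(j)/\varrho(j) - A},\, e^{n\varrho'(j)/\varrho(j) - B}\bigr]$ for some almost surely positive random constant $c(\omega)$. I would then label these boxes $b_1(n),\ldots,b_{v_n}(n)$ and place inside each an imaginary sub-box of size exactly $z_n := e^{n\varrho'(j)/\varrho(j) - A}$.

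Next I would use the classical Poissonization trick: the numbers of balls landing in the imaginary sub-boxes are, conditionally on the environment $\mathcal{F}_\infty$, independent $\Poisson(x_n z_n)$ random variables. The crucial calculation is
\[ v_n (x_n z_n)^j = c(\omega)\,n^{-1/2} e^{n f(j)} \cdot \varrho(j)^{-n} n^{aj} e^{n j \varrho'(j)/\varrho(j)} e^{-Aj} = c(\omega)\,e^{-Aj}\,n^{aj - 1/2}, \]
where the exponentials cancel thanks to the identity $f(j) = \ln\varrho(j) - j\varrho'(j)/\varrho(j)$. Since $a>1/(2j)$, this quantity tends to $+\infty$, while $f(j)>0$ forces $x_n z_n\to 0$, so that $\ln\mathbb{P}(\Poisson(x_n z_n)<j)\sim -(x_n z_n)^j/j!$. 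Letting $A_n$ denote the event that every imaginary sub-box contains strictly less than $j$ balls, this gives
\[ \mathbb{P}(A_n\mid\mathcal{F}_\infty) \leq \mathbb{P}(\Poisson(x_n z_n)<j)^{v_n} \leq \exp\bigl(-c'(\omega)\,n^{aj-1/2}\bigr) \]
almost surely for $n$ large. The right-hand side is summable, so the conditional Borel--Cantelli lemma (applied in the ball-throwing randomness given $\mathcal{F}_\infty$) yields that almost surely $A_n$ occurs only finitely often, which is the desired statement.

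The main technical obstacle is the two-layer randomness: Lemma~\ref{box} provides the lower bound on $v_n$ only almost surely with a constant depending on $\omega$, so the Poisson bound on $\mathbb{P}(A_n)$ is conditional. Using the conditional Borel--Cantelli lemma (rather than the unconditional version available in the deterministic setting) is what makes this step go through cleanly; the exponent cancellation above is immediate from the definition of $f$ but should be checked explicitly.
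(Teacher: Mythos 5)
Your proof is correct and follows essentially the same approach as the paper: invoke Lemma~\ref{box} at $\theta=j$ to get at least order $n^{-1/2}e^{nf(j)}$ boxes of size comparable to $e^{n\varrho'(j)/\varrho(j)}$, place imaginary sub-boxes of uniform size inside them, Poissonize, observe that the exponentials cancel by the definition of $f$ leaving a polynomially growing exponent $n^{aj-1/2}$, and apply Borel--Cantelli conditionally on $\mathcal{F}_\infty$. The only cosmetic difference is the choice of window (the paper takes $[z_n,2z_n]$ with $z_n=e^{n\varrho'(j)/\varrho(j)}$, whereas you shift by $e^{-A}$, $e^{-B}$, which only changes the constant), and you are usefully more explicit than the paper about the conditional Borel--Cantelli step needed to handle the two-layer randomness.
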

\begin{proof}
 Let \[  z_n : =  e^{n\frac{\varrho^{\prime}(j)}{\varrho(j)}}. \]
and let $ M_n $ denote the number of boxes at generation $ n $ of type $ 1 $ with size in the interval $ [z_n, 2z_n] $. Let
\[Z(j)=  \frac{v_1(j) w_1(j) W(j)}{\sqrt{2 \pi f^{\prime \prime}(j)}} \cdot \frac{1-2^{-j}}{j},\] and $ v_n(j)= \frac{1}{2}Z(j)e^{nf(j)}n^{-\frac{1}{2}} $. From Lemma \ref{box} we know that a.s. there exists a natural number $ n_0 $ such that for all $ n \geq n_0 $, we have $M_n  \geq v_n(j)  $. We can thus  a.s. consider the first $ v_n(j) $ boxes in $ M_n $, say $ b_1(n),...,b_{v_n(j)}(n) $ and denote their size with $ l_1(n),...,l_{v_n(j)}(n) $. We place an imaginary box  $ \textbf{b}_i(n) $ in $ b_i(n) $ for $ 1\leq i \leq v_n(j) $, each of size exactly $ z_n $. If a ball falls into the box $ b_i(n) $ it is placed in the imaginary box $ \textbf{b}_i(n) $ with probability $ \frac{z_n}{l_i} $.  

 Let   $ A_n $ denote the  event that the  boxes   $ \textbf{b}_i(n) $ for $ 1\leq i \leq v_n(j) $ contain strictly less then $ j $ balls when $ \Poisson(x_n) $ balls have been thrown. Since conditioned on $ \mathcal{F}_n $  the number of balls in each box of generation $ n $ are independent Poisson  random variables with parameter $ x_nz_n $, we have:
\[\mathbb{P}(A_n\vert \mathcal{F}_{\infty}) \leq \mathbb{P}(\Poisson(x_nz_n) < j)^{v_n(j)}. \]
We then finish the proof in the same way as the proof of Lemma \ref{tuttituu}.
\end{proof}

Performing the same computations as in the proof of Proposition \ref{propediprop}, we arrive at:
\begin{proposition}Suppose that (\ref{positive regular}) and (\ref{strictly convex}) hold. For every integer $ j \in (\theta_*, \theta^*) $ we have
\begin{eqnarray*}  \frac{1}{\ln(m)}  H_{m,j} \geq \frac{j}{-\ln(\varrho(j))} +\textit{O}\left( \frac{\ln \ln(m)}{\ln(m)} \right)\qquad a.s. \end{eqnarray*}
as $ m $ tends to infinity.
\end{proposition}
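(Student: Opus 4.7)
The plan is to follow the strategy of the proof of Proposition \ref{propediprop} essentially verbatim, feeding in the preceding lemma (the random-environment analogue of Lemma \ref{tuttituu}) in place of its deterministic counterpart. The only technical change from the deterministic setting is that the polynomial correction $n^a$ with $a>\frac{1}{2j}$ now plays the role of the exponential correction $e^{n\varepsilon'}$; the bookkeeping is otherwise identical, although one must now work on the full probability space that also carries the environment.

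First I would fix $b>a>\frac{1}{2j}$ and introduce the auxiliary sequence $y_n := \varrho(j)^{-n/j}\, n^b$. Because $b>a$, the ratio $y_{n-1}/x_n$ tends to infinity, and in particular $y_{n-1}/x_n > 3$ for all large $n$. A standard Chebyshev bound then yields
\[ \mathbb{P}(\Poisson(x_n) \geq \lceil y_{n-1} \rceil) \leq \mathbb{P}(|\Poisson(x_n) - x_n| \geq 2 x_n) \leq \frac{1}{4x_n}, \]
which is summable since $x_n$ grows exponentially in $n$. Borel--Cantelli then gives that almost surely $\Poisson(x_n) < \lceil y_{n-1}\rceil$ for all $n$ large enough.

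Combining this with the preceding lemma, which a.s.\ produces at generation $n$ at least one box containing $j$ or more balls as soon as $\Poisson(x_n)$ balls have been thrown, I conclude that $N^{(n)}_{\lceil y_{n-1}\rceil, j}\geq 1$ almost surely for all large $n$, and in particular $H_{\lceil y_{n-1}\rceil, j} > n$. To finish, for $m$ large fix the unique integer $n=n(m)$ with $y_{n-1}\leq m < y_n$. Then $H_{m,j}\geq H_{\lceil y_{n-1}\rceil, j} > n$, while the inequality $m < y_n$, after taking logarithms, gives
\[ n \;\geq\; \frac{j}{-\ln(\varrho(j))}\ln(m) \;-\; \frac{b j}{-\ln(\varrho(j))}\ln(n). \]
Since $n = O(\ln m)$ one has $\ln n = \ln\ln m + O(1)$, and division by $\ln m$ yields the stated bound.

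The main new ingredient compared to the nonrandom setting is the almost-sure strict positivity of the martingale limit $W(j)$ entering Lemma \ref{box}; this is precisely what forces the restriction $j\in(\theta_*,\theta^*)$ and is the reason the preceding existence lemma goes through. Once that is in hand, no genuine obstacle remains: the hard part is really only checking that the Chebyshev--Borel--Cantelli step is tight enough to keep the logarithmic correction within the announced order $O(\ln\ln m /\ln m)$, which it does because any choice $b>a>\frac{1}{2j}$ suffices and can in fact be taken arbitrarily close to $\frac{1}{2j}$.
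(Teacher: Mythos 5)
Your proof is correct and is essentially the argument the paper has in mind: the paper explicitly defers to the computation in the proof of Proposition~\ref{propediprop}, and your write-up carries that computation out, correctly swapping the exponential correction $e^{n\varepsilon'}$ of the deterministic setting for the polynomial correction $n^a$ (forced by $a>\tfrac{1}{2j}$ in the preceding lemma) and observing that this is precisely what yields the $O(\ln\ln m/\ln m)$ error term rather than merely a $\liminf$.
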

We now turn to the second case. Suppose that $ \theta^* < \infty $ and $ j \geq \theta^* $. Let $0<\varepsilon^{\prime} < \varepsilon$ and define the sequences
\[x_n:=e^{n\left(\frac{1}{\zeta^*}+\varepsilon  \right)} \quad \text{and} \quad y_n=e^{n\left(-\frac{1}{\zeta^*}- \varepsilon^{\prime} \right)}.\]
We then have:
\begin{lemma}
 For almost all $ \omega $ there exists a natural number $ n_0(\omega) $, s.t. $ N_{\lceil x_n \rceil,j} \geq 1 $ for all $ n \geq n_0(\omega) $.
\end{lemma}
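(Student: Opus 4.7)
The plan is to use the largest box at generation $n$ as the candidate recipient of many balls, exploiting the fact that its size shrinks no faster than $y_n$ while the number of balls thrown grows slightly faster than $1/y_n$. Specifically, by Corollary \ref{largest box} (applicable since $\theta^* < \infty$), we have $\lim_{n \to \infty} n^{-1} \ln \overline{l}^{(n)} = -1/\zeta^*$ almost surely. Since $\varepsilon' > 0$, this yields a random index $n_1(\omega)$ such that $\overline{l}^{(n)} \geq y_n$ for every $n \geq n_1(\omega)$.

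Conditionally on $\mathcal{F}_n$, the number of balls landing in the largest box when $\lceil x_n \rceil$ balls are thrown is binomial $B(\lceil x_n \rceil, \overline{l}^{(n)})$. On the good event $\{\overline{l}^{(n)} \geq y_n\}$ its conditional mean is at least $x_n y_n = e^{n(\varepsilon - \varepsilon')}$, which diverges exponentially because $\varepsilon > \varepsilon'$. Since $j$ is a fixed integer, a standard Chernoff lower-tail estimate for the binomial (for instance $\mathbb{P}(B(m,p) \leq mp/2) \leq e^{-mp/8}$) gives, for all sufficiently large $n$,
\[ \mathbb{P}\bigl(B(\lceil x_n \rceil, \overline{l}^{(n)}) < j \,\big|\, \mathcal{F}_n\bigr) \leq \exp\bigl(-c\, e^{n(\varepsilon - \varepsilon')}\bigr) \]
on $\{\overline{l}^{(n)} \geq y_n\}$, for a suitable constant $c > 0$.

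Taking expectations, the probabilities $\mathbb{P}\bigl(\{N_{\lceil x_n \rceil, j} = 0\} \cap \{\overline{l}^{(n)} \geq y_n\}\bigr)$ form a summable series, so by the Borel-Cantelli lemma only finitely many of these joint events occur almost surely. Combined with the almost sure bound $\overline{l}^{(n)} \geq y_n$ for $n \geq n_1(\omega)$ provided by Corollary \ref{largest box}, this yields a random $n_0(\omega)$ such that $N_{\lceil x_n \rceil, j} \geq 1$ for every $n \geq n_0(\omega)$.

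The only delicate point is to order the randomness correctly: we must condition on $\mathcal{F}_n$ first so that the random environment (and hence $\overline{l}^{(n)}$) is frozen, apply the binomial large deviation bound, and only then integrate. The strict inequality $\varepsilon > \varepsilon'$ is exactly what supplies the exponential margin $x_n y_n \to \infty$ that makes the Chernoff bound summable. A Poissonization variant (throwing $\mathrm{Poisson}(x_n)$ balls and controlling its deviation from $x_n$ by Chebyshev, in the style of Lemma \ref{tuttituu}) would produce the same conclusion but appears unnecessary here.
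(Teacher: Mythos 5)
Your proof is correct and follows essentially the same route as the paper: focus on the largest box, use Corollary~\ref{largest box} to get the almost sure lower bound $\overline{l}^{(n)} \geq y_n$ eventually, and then combine a conditional binomial tail estimate with Borel--Cantelli. The only difference is the choice of tail bound: the paper applies Markov's inequality to $(B(m,p)+1)^{-1}$ via the elementary identity $\mathbb{E}[(B(m,p)+1)^{-1}]\leq 1/(mp)$, yielding the (already summable) bound $j/(x_n y_n)=j e^{-n(\varepsilon-\varepsilon')}$, while you use a Chernoff lower-tail estimate to get a doubly exponential bound; both are valid, and your observation that stochastic domination of $B(\lceil x_n\rceil,\overline{l}^{(n)})$ by $B(\lceil x_n\rceil,y_n)$ replaces the paper's imaginary-box device is accurate.
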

\begin{proof}
 Let $ \overline{b}_n $ denote the largest box at generation $ n $ and recall that $ \overline{l}^{(n)} $ denotes its size. We place an imaginary box $ \textbf{b}_n $ of size $ \overline{l}^{(n)} \wedge y_n $ inside the largest box. When a ball falls into $ \overline{b}_n $ it is placed in $ \textbf{b}_n $ with probability $ \frac{y_n }{\overline{l}^{(n)}} $. Now, let \[A_n:= \lbrace  \overline{l}^{(n)} \geq y_n  \rbrace \] 
and let $ B_n $ denote the event that the box $\textbf{b}_n  $ contains strictly less than $ j $ balls when $\lceil x_n  \rceil$ balls have been thrown. As in the proof of Lemma \ref{Lemma6}, we have:
\begin{eqnarray*} \mathbb{P}(A_n \cap B_n) &\leq& \mathbb{P}(B_n \vert A_n)  \\ &\leq& \mathbb{P}((B(\lceil x_n \rceil, y_n )+1)^{-1} \leq j^{-1}) \\& \leq & \frac{j}{ x_n  \cdot y_n  }, \end{eqnarray*}

and we arrive at 
\[\mathbb{P}(A_n \cap B_n) \leq j e^{-n (\varepsilon - \varepsilon^{\prime})}. \]
We conclude by Borel-Canteli's lemma and the fact that $ \mathbb{P}(A_n)=1 $ by Corollary \ref{largest box}.
\end{proof}

Performing the usual computations, we arrive at:

\begin{proposition} Suppose that (\ref{positive regular}),  (\ref{strictly convex}) hold, that $ \theta^ * < \infty $ and $ j\geq \theta^* $. We then have
\[\liminf_{m \rightarrow \infty} \frac{1}{\ln(m)} H_{m,j} \geq \zeta^* \qquad a.s. \]

\end{proposition}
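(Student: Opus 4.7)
The plan is to deduce the lower bound from the preceding lemma by combining it with the monotonicity of $m \mapsto H_{m,j}$ and a routine interpolation between the geometrically spaced thresholds $\lceil x_n \rceil$, where $x_n = \exp(n(1/\zeta^* + \varepsilon))$. Throwing additional balls can only delay the first generation at which every box contains strictly fewer than $j$ balls, so $H_{m,j}$ is nondecreasing in $m$.

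Fix $\varepsilon > \varepsilon' > 0$ as in the preceding lemma. By that lemma, for almost every $\omega$ there is $n_0(\omega)$ such that for every $n \geq n_0(\omega)$ some box of generation $n$ contains at least $j$ balls when $\lceil x_n \rceil$ balls have been thrown; equivalently, $H_{\lceil x_n \rceil,\, j} > n$. For any such $\omega$, the sequence $(x_n)$ is strictly increasing and diverges, so for every sufficiently large $m$ there is a unique $n \geq n_0(\omega)$ with $\lceil x_n \rceil \leq m < \lceil x_{n+1} \rceil$. Monotonicity then yields $H_{m,j} \geq H_{\lceil x_n \rceil,\, j} > n$.

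Taking logarithms in $m < \lceil x_{n+1} \rceil$ gives $\ln(m) \leq (n+1)(1/\zeta^* + \varepsilon) + o(1)$, so
\[
\frac{H_{m,j}}{\ln(m)} \;>\; \frac{n}{(n+1)(1/\zeta^* + \varepsilon) + o(1)} \;\longrightarrow\; \frac{\zeta^*}{1 + \varepsilon\zeta^*}
\]
as $m \to \infty$. Taking $\liminf$ and then $\varepsilon \downarrow 0$ yields $\liminf_{m \to \infty} H_{m,j}/\ln(m) \geq \zeta^*$ almost surely.

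No substantive obstacle arises at this step: the real work has already been done by the preceding lemma, which combined the almost-sure lower bound on the size of the largest box (Corollary \ref{largest box}) with a Poisson tail estimate for the occupancy of an embedded imaginary box. It is worth noting that, since that lemma is stated directly in terms of the deterministic number $\lceil x_n \rceil$ of balls, no de-Poissonization step (unlike in the saturation-level arguments earlier in the paper) is required here.
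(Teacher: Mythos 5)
Your argument is correct and matches the paper's intended "usual computations": combine the preceding lemma (which a.s.\ gives $H_{\lceil x_n\rceil,j}>n$ eventually) with monotonicity of $m\mapsto H_{m,j}$, interpolate $\lceil x_n\rceil\leq m<\lceil x_{n+1}\rceil$, take logarithms, and send $\varepsilon\downarrow 0$ along a countable sequence. You are also right that no de-Poissonization is required here, since the preceding lemma is already phrased with the deterministic ball count $\lceil x_n\rceil$ rather than $\Poisson(x_n)$ as in the non-random-environment Proposition~\ref{propediprop}.
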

\subsection{Study of the saturation level}
 We aim to show that:
\begin{theorem} Suppose that (\ref{positive regular}), (\ref{strictly convex}) and (\ref{theta stern}) hold. We then have
\[\lim_{m \rightarrow \infty} \frac{1}{\ln(m)} G_{m,j} = \zeta_* \quad a.s. \] 
\end{theorem}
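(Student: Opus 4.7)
The strategy mirrors the deterministic argument of Section 2.3 with the deterministic smallest-box estimate replaced by Corollary \ref{smallest box}. The environment-free coupling $T_n^j \leq T_n^{(1)} + \cdots + T_n^{(j)}$ (conditionally on $\mathcal{F}_\infty$, where the $T_n^{(i)}$ are i.i.d.\ copies of $T_n$) together with the obvious monotonicity $G_{m,j} \leq G_m$ yield, exactly as in Section 2.3, the sandwich
\[
G_{\lfloor m/j \rfloor} \leq G_{m,j} \leq G_m \quad \text{a.s.},
\]
which reduces the theorem to the case $j=1$, i.e., to proving $\lim_m G_m/\ln(m) = \zeta_*$ a.s.

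For the lower bound $\liminf_m G_m/\ln(m) \geq \zeta_*$ a.s., I would Poissonize the ball-throwing so that the completion time at generation $n$ is $\textbf{T}_n = \max_{k \leq K^n}\textbf{exp}(l_k^{(n)})$. Using the representation $\textbf{exp}(l_k^{(n)}) = \textbf{exp}_k^{(n)}(1)/l_k^{(n)}$ with iid standard exponentials $\textbf{exp}_k^{(n)}(1)$ independent of $\mathcal{F}_\infty$ gives $\textbf{T}_n \leq \bigl(\max_{k \leq K^n}\textbf{exp}_k^{(n)}(1)\bigr)/\underline{l}^{(n)}$. A union-bound together with Borel-Cantelli gives $\max_{k \leq K^n}\textbf{exp}_k^{(n)}(1) \leq 2n\ln K$ eventually a.s., while Corollary \ref{smallest box} gives $\underline{l}^{(n)} \geq e^{-n(1/\zeta_* + \varepsilon/2)}$ eventually a.s. Combining these yields $\textbf{T}_n \leq e^{n(1/\zeta_* + \varepsilon)}$ eventually a.s., and the usual Chebyshev bound on a Poisson$(x_n)$ variable followed by the squeeze of a general $m$ between consecutive threshold levels, exactly as in Proposition \ref{Upper Bound saturation}, delivers the lower bound after letting $\varepsilon \downarrow 0$.

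For the upper bound $\limsup_m G_m/\ln(m) \leq \zeta_*$ a.s., I would use that conditionally on $\mathcal{F}_\infty$ the first-ball clock of the smallest gen-$n$ box has distribution $\textbf{exp}(\underline{l}^{(n)})$, so for $x_n = e^{n(1/\zeta_*-\varepsilon^{\prime})}$,
\[
\mathbb{P}\bigl(\textbf{T}_n \leq x_n \,\big|\, \mathcal{F}_\infty\bigr) \leq 1 - \exp\bigl(-\underline{l}^{(n)} x_n\bigr).
\]
Picking $0 < \varepsilon < \varepsilon^{\prime}$, Corollary \ref{smallest box} gives $\underline{l}^{(n)} \leq e^{-n(1/\zeta_*-\varepsilon)}$ eventually a.s., so this conditional probability is eventually dominated by $e^{-n(\varepsilon^{\prime}-\varepsilon)}$, which is summable; conditional Borel-Cantelli then yields $\textbf{T}_n > x_n$ eventually a.s., i.e.\ at least one box at generation $n$ is still empty after Poisson$(x_n)$ balls. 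The transfer back to the deterministic ball count via Chebyshev and the final squeeze in $m$, as in Proposition \ref{Lower Bound saturation}, give $\limsup_m G_m/\ln(m) \leq 1/(1/\zeta_*-\varepsilon^{\prime})$ a.s., and $\varepsilon^{\prime} \downarrow 0$ concludes.

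The only substantive point beyond the deterministic argument is that $\underline{l}^{(n)}$ is itself random, so the tail estimates on the exponential clocks must be performed conditionally on $\mathcal{F}_\infty$ and then integrated out; this is precisely why hypothesis (\ref{theta stern}) is imposed, as it is exactly what makes Corollary \ref{smallest box} available. Beyond this, every step is a direct transcription of Section 2.3 with $C_*$ replaced by $\zeta_*$.
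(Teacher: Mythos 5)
Your proposal is correct and follows essentially the same route as the paper: reduce to $j=1$ via the sandwich $G_{\lfloor m/j\rfloor}\leq G_{m,j}\leq G_m$, Poissonize, bound the completion time $\mathbf{T}_n$ above and below using Corollary \ref{smallest box} for $\underline{l}^{(n)}$ conditionally on $\mathcal{F}_\infty$, and transfer back to $G_m$ via Chebyshev and the usual squeeze in $m$. The only (inessential) variation is that you bound $\max_{k\leq K^n}\mathbf{exp}_k(1)$ via a direct union bound plus Borel--Cantelli rather than via the Gumbel-limit tail estimate the paper uses; and your sign convention $\varepsilon<\varepsilon^{\prime}$ in the upper-bound lemma is the consistent one, whereas the paper's written version contains a few sign typos in that step.
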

We first tackle the proof of the lower bound. Let $ \varepsilon^{\prime} > 0 $ and define the sequence 
\[x_n=e^{n(\frac{1}{\zeta_*} + \varepsilon^{\prime})}, \]
where, $ \zeta_* $  is the constant appearing in the limit behavior of the smallest box. In the same notation as in the previous sections we then have:
\begin{lemma}
 For almost all $ \omega $ there exists a natural number $ n_0(\omega) $, such that for all $ n \geq n_0(\omega) $ :
 \[\textbf{T}_n(\omega) < x_n. \]
 \end{lemma}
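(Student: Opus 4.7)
The plan is to mirror the argument of Lemma~\ref{markov} from the deterministic setting, conditioning on the environment $\mathcal{F}_\infty$ and substituting the random-environment smallest-box estimate of Corollary~\ref{smallest box} for the Perron--Frobenius bound used there. After the Poissonization introduced in the previous section, $\textbf{T}_n$ is the maximum of the exponential waiting times $\textbf{exp}(l_k^{(n)})$ over all boxes at generation $n$. Since every box has at most $K$ children, at most $K^n$ boxes exist at generation $n$, so the usual rescaling of exponentials gives
\[\textbf{T}_n \leq \frac{1}{\underline{l}^{(n)}} \max_{k \leq K^n} \textbf{exp}_k(1),\]
where the $\textbf{exp}_k(1)$ are i.i.d.\ standard exponentials independent of $\mathcal{F}_\infty$.

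I would then fix some $\varepsilon'' \in (0, \varepsilon')$. By Corollary~\ref{smallest box}, on a set of full probability there exists a random $n_1(\omega)$ with $\underline{l}^{(n)} \geq e^{-n(1/\zeta_* + \varepsilon'')}$ for all $n \geq n_1(\omega)$, whence $\underline{l}^{(n)} x_n \geq e^{n(\varepsilon' - \varepsilon'')}$ eventually. Combining this with the elementary union bound $\mathbb{P}(\max_{k \leq K^n} \textbf{exp}_k(1) \geq t) \leq K^n e^{-t}$, I would obtain
\[\mathbb{P}(\textbf{T}_n \geq x_n \mid \mathcal{F}_\infty) \leq K^n \exp\bigl(-e^{n(\varepsilon' - \varepsilon'')}\bigr) \quad \text{on } \{n \geq n_1\},\]
which is almost surely summable in $n$. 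A conditional Borel--Cantelli argument (equivalently, Fubini combined with the unconditional Borel--Cantelli lemma after truncating at $n_1$) then yields $\textbf{T}_n < x_n$ for all sufficiently large $n$, almost surely.

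The main subtlety I foresee is the interaction between the two sources of randomness: the lower bound on $\underline{l}^{(n)}$ provided by Corollary~\ref{smallest box} holds only from an environment-dependent threshold $n_1(\omega)$ onwards, so the Borel--Cantelli step has to be run conditionally on $\mathcal{F}_\infty$ rather than by a direct first-moment calculation in the spirit of Lemma~\ref{markov}. Once this split is made, the remaining analysis is essentially bookkeeping, with the a.s.\ decay $\underline{l}^{(n)} = e^{-n/\zeta_* + o(n)}$ playing the role of the Perron--Frobenius exponential lower bound on the smallest box available in the non-random setting.
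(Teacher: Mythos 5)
Your proposal is correct and follows essentially the same approach as the paper: you reduce $\mathbf{T}_n$ to a maximum of i.i.d.\ standard exponentials scaled by $1/\underline{l}^{(n)}$, use Corollary~\ref{smallest box} to bound $\underline{l}^{(n)}$ from below almost surely, and conclude with a Borel--Cantelli argument. The only point of divergence is your tail estimate for $\max_{k\leq K^n}\mathbf{exp}_k(1)$ — you use the elementary union bound $K^n e^{-t}$, while the paper invokes the Gumbel convergence $\max_{k\leq K^n}\mathbf{exp}_k(1) - n\ln K \Rightarrow \mathbf{G}(1)$ to obtain the asymptotically equivalent bound $2e^{-(t-n\ln K)}$ for large $n$; your union bound is simpler, avoids an asymptotic approximation, and gives the same summability, so this difference is purely cosmetic.
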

 \begin{proof}
Let $ 0 < \varepsilon < \varepsilon^{\prime} $. Define $ A_n$, the event that the smallest box  $\underline{l}^{(n)}  $ of generation $ n $ is larger than $ e^{n(-\frac{1}{\zeta_*}-  \varepsilon)} $ and $ B_n $, the event that $ \textbf{T}_n \geq x_n $. Let $ \textbf{exp}_k(1) $ for $ k \leq K^n$ denote independent exponential random variables with parameter $ 1 $. We then have:
\begin{eqnarray*}
\mathbb{P}(B_n \cap A_n) & \leq & \mathbb{P}(B_n \vert A_n) \\  & \leq & \mathbb{P}(\max_{k \leq K^n} \textbf{exp}_k(1) \geq e^{n(-\frac{1}{\zeta_*} - \varepsilon )}x_n) \\ & \leq & 2 \cdot \mathbb{P}(\textbf{G}(1) \geq e^{n(-\frac{1}{\zeta_*} - \varepsilon )}x_n-n\ln(K)) \\ & \leq & 2 \cdot \exp(-(e^{n(  \varepsilon^{\prime} - \varepsilon)}  -n\ln(K)),
\end{eqnarray*}
and we conclude by the fact that $ \mathbb{P}(A_n)=1 $ by Corollary \ref{smallest box} and Borel-Cantelli's lemma.
 \end{proof}

 Performing the same calculations and generalizations as in the previous section, we arrive at:
 \begin{proposition} Suppose that (\ref{positive regular}), (\ref{strictly convex}) and (\ref{theta stern}) hold. We have:
 \[\liminf_{m \rightarrow \infty} \frac{1}{\ln(m)} G_{m,j} \geq \zeta_* \quad a.s.  \]

 \end{proposition}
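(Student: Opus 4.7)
The plan is to parallel the argument of Proposition~\ref{Upper Bound saturation}, with the preceding lemma (bounding $ \textbf{T}_n $) playing the role of Lemma~\ref{markov}. As a preliminary reduction, I would first show that it suffices to prove $ \liminf_{m \to \infty} G_m/\ln(m) \geq \zeta_* $ for $ G_m := G_{m,1} $: comparing $ T_n^j $ with a sum of $ j $ independent copies of $ T_n $ yields $ G_{m,j} \geq G_{\lfloor m/j \rfloor} $ almost surely, and the constant $ j $ is harmless in the logarithmic asymptotic.

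The second ingredient is the Poissonization coupling, which is independent of the random environment and couples the deterministic throwing process with a rate-$ 1 $ Poisson process. Under this coupling, the event $ \{ \textbf{T}_n < x_n \} $ is equivalent to $ T_n \leq \Poisson(x_n) $: the collection is complete by time $ x_n $, so at most $ \Poisson(x_n) $ balls were needed. Setting $ x_n := e^{n(1/\zeta_* + \varepsilon^\prime)} $ as in the preceding lemma, this bound holds almost surely for all large $ n $. An upper-tail Chebyshev estimate
\[ \mathbb{P}\bigl( \Poisson(x_n) > 2 x_n \bigr) \leq \mathbb{P}\bigl( \vert \Poisson(x_n) - x_n \vert > x_n \bigr) \leq \frac{1}{x_n} \]
is summable because $ x_n $ grows exponentially, so Borel--Cantelli gives $ \Poisson(x_n) \leq 2 x_n $, and consequently $ T_n \leq 2 x_n $, almost surely for all large $ n $.

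To conclude, for each large $ m $ let $ n = n(m) $ be the largest integer with $ 2 x_n \leq m $; then $ T_n \leq m $, so after throwing $ m $ balls no box of generation $ n $ is empty, and by the cascade property no box of any earlier generation is empty either, yielding $ G_m > n $. Taking logarithms of $ m \geq 2 x_n $ gives $ n \geq (\ln(m) - \ln 2) / (1/\zeta_* + \varepsilon^\prime) $, whence $ \liminf_{m \to \infty} G_m/\ln(m) \geq (1/\zeta_* + \varepsilon^\prime)^{-1} $, and letting $ \varepsilon^\prime \downarrow 0 $ delivers the claim, using $ \zeta_* \in (0,\infty) $ under hypothesis~(\ref{theta stern}). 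The individual steps are routine; the real work is absorbed into the preceding lemma, where the random environment enters through the branching random walk control of the smallest box from Corollary~\ref{smallest box}. Within this argument, the only delicacy is arranging the exponential summability of the Chebyshev tail while preserving the sharp coefficient $ 1/\zeta_* $ in the final inversion.
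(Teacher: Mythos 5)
Your proposal is correct and follows essentially the same route as the paper, which (after the preceding lemma giving $\textbf{T}_n < x_n$ a.s. for all large $n$ with $x_n = e^{n(1/\zeta_* + \varepsilon')}$) simply invokes ``the same calculations'' as in the deterministic Proposition~\ref{Upper Bound saturation}: Poissonize, use a Chebyshev tail bound for $\Poisson(x_n)$, apply Borel--Cantelli, and invert; the reduction $G_{m,j}\geq G_{\lfloor m/j\rfloor}$ is also the one the paper uses. One small slip in your write-up: the inequality $n \geq (\ln m - \ln 2)/(1/\zeta_*+\varepsilon')$ does \emph{not} follow from taking logarithms of $m\geq 2x_n$ (that gives $n \leq (\ln m - \ln 2)/(1/\zeta_*+\varepsilon')$); the lower bound on $n=n(m)$ comes instead from the maximality of $n$, i.e.\ from $m < 2x_{n+1}$, which yields $n > (\ln m - \ln 2)/(1/\zeta_*+\varepsilon') - 1$ --- the extra additive constant is harmless once you divide by $\ln m$ and let $m\to\infty$. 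With that repair the argument goes through exactly as intended, and your remark that the Chebyshev/Borel--Cantelli step is independent of the random environment is indeed the key point that lets the deterministic computation transfer verbatim.
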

 For the upper bound let $ \varepsilon^{\prime} >0 $ and define the sequence \[ y_n= e^{n(\frac{1}{\zeta_*} - \varepsilon^{\prime})}.\]
 We then have:
 \begin{lemma}

 For almost all $ \omega $ there exists a natural number $ n_0(\omega) $, such that for all $ n \geq n_0(\omega) $ :
 \[\textbf{T}_n(\omega) > y_n. \]
 
 \end{lemma}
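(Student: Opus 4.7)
The plan is to mimic the proof of Lemma \ref{markov2}, replacing the use of Lemma \ref{large deviations} (which gave an a.s.\ upper bound on the smallest box in the deterministic environment) by its random-environment analog, namely Corollary \ref{smallest box}. The Poissonization framework carries over unchanged: since $\mathbf{T}_n = \max_{k \leq K^n} \mathbf{exp}(l_k^{(n)})$, and the exponential with the smallest parameter stochastically dominates all others, we have
\[\mathbb{P}(\mathbf{T}_n \leq y_n \mid \mathcal{F}_n) \leq \mathbb{P}\bigl(\mathbf{exp}(\underline{l}^{(n)}) \leq y_n \,\big|\, \mathcal{F}_n\bigr) = 1 - e^{-\underline{l}^{(n)} y_n} \leq \underline{l}^{(n)} y_n.\]

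Next, I would fix $0 < \varepsilon < \varepsilon'$ and introduce the environment event
\[A_n := \bigl\{ \underline{l}^{(n)} \leq e^{n(-1/\zeta_* + \varepsilon)} \bigr\}.\]
By Corollary \ref{smallest box}, $A_n$ holds almost surely for all sufficiently large $n$. On this event,
\[\underline{l}^{(n)} \, y_n \leq e^{n(-1/\zeta_* + \varepsilon)} \cdot e^{n(1/\zeta_* - \varepsilon')} = e^{-n(\varepsilon' - \varepsilon)},\]
so taking expectations of the above conditional estimate gives
\[\mathbb{P}(\{\mathbf{T}_n \leq y_n\} \cap A_n) \leq \mathbb{E}\bigl[ 1_{A_n} \, \underline{l}^{(n)} y_n \bigr] \leq e^{-n(\varepsilon' - \varepsilon)},\]
which is summable in $n$.

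Finally, by the Borel--Cantelli lemma, $\{\mathbf{T}_n \leq y_n\} \cap A_n$ occurs for only finitely many $n$ almost surely. Since $A_n^c$ also occurs for only finitely many $n$ almost surely (by Corollary \ref{smallest box}), we conclude that $\mathbf{T}_n > y_n$ for all sufficiently large $n$, almost surely. The only step requiring a little care is the conditional estimate, ensuring that the bound on the smallest box, which is an almost-sure statement about the random environment, interacts correctly with the Poissonized ball-throwing randomness; this is handled cleanly by restricting to $A_n$ before taking expectations. The rest of the argument is a routine Borel--Cantelli application, in the same spirit as the proofs of Lemmas \ref{markov} and \ref{markov2}.
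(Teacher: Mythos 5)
Your proof is correct and follows essentially the same strategy as the paper's: Poissonization, bounding via the waiting time for the smallest box, restricting to the good environment event $A_n$ coming from Corollary~\ref{smallest box}, and Borel--Cantelli. If anything, your version is cleaner than the printed one, which contains a sign typo in the choice of $\varepsilon,\varepsilon'$ (it writes $\varepsilon' < \varepsilon$ where $\varepsilon < \varepsilon'$ is needed, exactly as you have it) and loosely states $\mathbb{P}(A_n)=1$ rather than that $A_n$ holds for all sufficiently large $n$ almost surely, which is the correct reading you give.
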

 \begin{proof}
 Let $ 0 < \varepsilon^{\prime} < \varepsilon $. Define $ A_n$, the event that the smallest box  $\underline{l}^{(n)}  $ of generation $ n $ is smaller than $ e^{n(-\frac{1}{\zeta_*} + \varepsilon)} $ and $ B_n $, the event that $ \textbf{T}_n \leq y_n $.  We then have:
\begin{eqnarray*}
\mathbb{P}(B_n \cap A_n) & \leq &\mathbb{P}(B_n \vert A_n)   \\  & \leq & \mathbb{P}( \textbf{exp}_k(1) \leq e^{n(\frac{1}{\zeta_*} + \varepsilon )}y_n) \\ & \leq & 1-\exp(-e^{-n(\varepsilon-\varepsilon^{\prime})} ) \\ & \leq & e^{-n(\varepsilon - \varepsilon^{\prime})} ,
\end{eqnarray*}
and we conclude by the fact that $ \mathbb{P}(A_n)=1 $ by Corollary \ref{smallest box} and Borel-Cantelli's lemma.
 \end{proof}

 The upper bound then easily follows.
 
\acknowledgements
 I would like to thank Jean Bertoin for introducing me to this problem and for his advice and support. I would also like to thank two anonymous referees  for carefully reading the first draft of this work and for their helpful comments.
\nocite{*}
\bibliographystyle{abbrvnat}
\bibliography{sample-dmtcs}
\label{sec:biblio}

\end{document}